\theoremstyle{plain}
\newtheorem{theorem}{Theorem}[section]
\newtheorem{theorem*}{Theorem}[section]
\newtheorem{lemma}[theorem]{Lemma}
\newtheorem{claim}[theorem]{Claim}
\newtheorem{corollary}[theorem]{Corollary}
\newtheorem{corollary*}[theorem]{Corollary}
\newtheorem{proposition}[theorem]{Proposition}
\theoremstyle{definition}
\newtheorem{definition}[theorem]{Definition}
\newtheorem{Example}[theorem]{Example}
\theoremstyle{remark}
\newtheorem*{remark}{Remark}
\def\d{\displaystyle}
\def\t{\theta}
\newcommand\gl{\mathfrak{gl}}
\def\bigz{\mathbb{Z}}
\def\lo{\underline{o}}
\def\M{\mathcal{M}}
\def\C{\mathcal{C}}
\def\N{\mathcal{N}}
\def\tK{\Bbbk((\t))}
\def\O{\text{Ord}}
\begin{document}

\normalsize

\title{Calculation of local formal Mellin transforms}
\author{Adam Graham-Squire}
\date{}

 \begin{abstract} Much recent work has been done on the local Fourier transforms for connections on the punctured formal disk.  Specifically, the local Fourier transforms have been introduced, shown to induce certain equivalences of categories, and explicit formulas have been found to calculate them.  In this paper, we prove analogous results in a similar situation, the local Mellin transforms for connections on the punctured formal disk.  Specifically, we introduce the local Mellin transforms and show that they induce equivalences between certain categories of vector spaces with connection and vector spaces with invertible difference operators, as well as find formulas for explicit calculation in the same spirit as the calculations for the local Fourier transforms.
 \end{abstract}

\maketitle

 Keywords: local integral transforms; Mellin transform; difference operator; differential operator
 
Mathematics Subject Classification 2000: 14B20

\section{Introduction}

Recently, much research has been done on local Fourier transforms for connections on the punctured formal disk.  Namely, H.~Bloch and H.~Esnault in \cite{bloch} and R. Garcia Lopez in \cite{garcialopez}  introduce and analyse the local Fourier transforms.  Explicit formulas for calculation of the local Fourier transforms were proved independently by J.~Fang in \cite{fang} and C.~Sabbah in \cite{sabbah} using different methods.  In \cite{dima}, D.~Arinkin gives a different framework for the local Fourier transforms and also gives explicit calculation of the Katz-Radon transform.  In \cite{gs}, we use Arinkin's techniques from \cite{dima} to reproduce the calculations of \cite{fang} and \cite{sabbah}. The global \emph{Mellin} transform for connections on a punctured formal disk is given by Laumon in \cite{laumonmellin} as well as Loeser and Sabbah in \cite{loeser}, but since that time little work has been done on the Mellin transform in this area.  In \cite[Section 2.5]{dima}, Arinkin remarks that it would be interesting to apply his methods to other integral transforms such as the Mellin transform. This paper is the answer to that query.  We introduce the \emph{local} Mellin transforms on the punctured formal disk and prove results for them which are analogous to those of the local Fourier transforms.  One main difference between the analysis of the local Fourier and local Mellin transforms is this- whereas the local Fourier transforms deal only with differential operators, the local Mellin transforms input a differential operator and output a difference operator. 

The work done in this paper is as follows: after some preliminary definitions, we introduce the local Mellin transforms $\M^{(0,\infty)}$, $\M^{(x,\infty)}$, and $\M^{(\infty,\infty)}$ for connections on the punctured formal disk.  Our construction of the local Mellin transforms is analogous to the work of \cite{bloch} and \cite{dima} for the local Fourier transforms.  In particular, we mimic the framework given in \cite{dima} to define the local Mellin transforms, as Arinkin's construction lends itself most easily to calculation. We also show that the local Mellin transforms induce equivalences between certain categories of vector spaces with connection and categories of vector spaces with difference operators. Such equivalences could, in principle, reduce questions about difference operators to questions about (relatively more-studied) connections, although we do not do such an analysis in this work.  We end by using the techniques of \cite{gs} to give explicit formulas for calculation of the local Mellin transforms in the same spirit of the results of \cite{fang}, \cite{gs} and \cite{sabbah}.  An example of our main result is the calculation of $\mathcal{M}^{(0,\infty)}$.  We give it here, and it is also found near the end of the paper as Theorem \ref{thmMzi}:

Let $\Bbbk$ be an algebraically closed field of characteristic zero.  Definitions for $R$, $S$, $E$, and $D$ can be found in the body of the paper.

\begin{theorem*} Let $s$ and $r$ be positive integers, $a\in\Bbbk-\{0\}$, and $f\in R^{\circ}_r(z)$ with $f=az^{-s/r}+\underline{o}(z^{-s/r})$.  Then
\[\mathcal{M}^{(0,\infty)}(E_f)\simeq D_g,\]
where $g\in {S^{\circ}_{s}(\t)}$ is determined by the following system of equations:
\begin{equation*}\label{Mzisyseq1}f=-\t^{-1}
\end{equation*}
\begin{equation*}\label{Mzisyseq2} g=z-(-a)^{r/s}\left(\frac{r+s}{2s}\right)\t^{1+(r/s)}
\end{equation*}
\end{theorem*}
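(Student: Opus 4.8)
The plan is to follow the framework this paper has already set up for the local Mellin transform, adapting the explicit-calculation techniques that were developed for the local Fourier transform in \cite{gs} (which themselves reproduce \cite{fang} and \cite{sabbah} via Arinkin's methods in \cite{dima}). First I would write $E_f$ explicitly as a connection in the ramified coordinate $z^{1/r}$, with connection matrix governed by the leading term $f = az^{-s/r} + \lo(z^{-s/r})$, and unwind the definition of $\M^{(0,\infty)}$ as a composition of pullback to the correspondence, tensoring with the Mellin kernel, and the pushforward that produces a vector space with an invertible difference operator. The object of the computation is then to read off the irregular type $g$ of the resulting difference module $D_g$.

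The computational heart is a stationary-phase analysis adapted to the multiplicative (Mellin) setting. Whereas the Fourier kernel contributes an additive phase of the form $z\hat\zeta$, here the kernel contributes the logarithmic phase governing $z^{\t}$, so that the relevant critical-point equation matches the irregular type of $E_f$ to the difference-module coordinate $\t$. I expect this to produce the first relation $f = -\t^{-1}$ as the leading-order correspondence, which I would then solve to express $z$ as a Puiseux series in $\t$, namely $z = (-a)^{r/s}\t^{r/s} + \lo(\t^{r/s})$; note the inversion of the slope exponent $-s/r \mapsto r/s$ that is characteristic of these transforms.

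With the leading correspondence in hand, I would compute the transformed type $g$. The main (Legendre-type) term is simply $g = z$, and the subleading contribution is the analog of the Gaussian factor in ordinary stationary phase. In the Mellin setting the pushforward is governed by Gamma-function (difference-equation) asymptotics rather than a Gaussian integral, so the half-order correction should emerge from a Stirling-type expansion. This is precisely the mechanism I expect to produce the factor $\tfrac12$ inside $\tfrac{r+s}{2s}$ together with the correction $-(-a)^{r/s}\left(\tfrac{r+s}{2s}\right)\t^{1+(r/s)}$, giving the second equation of the system. I would finish by checking that $g$ indeed lies in $S^{\circ}_{s}(\t)$ with the asserted leading exponent, and by matching the rank and the residual (regular-singular) data to upgrade the equality of irregular types to the isomorphism $\M^{(0,\infty)}(E_f)\simeq D_g$.

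The step I expect to be the main obstacle is pinning down this half-order correction exactly. Carrying the second-order term of the phase through the ramified change of variables between $z^{1/r}$ and $\t^{1/s}$, correctly translating the Hessian contribution into the difference-operator world, and fixing the branch of $(-a)^{r/s}$ so that the coefficient comes out as $\tfrac{r+s}{2s}$ rather than some other rational multiple, is where the delicate bookkeeping lies. By contrast, the leading relation $f=-\t^{-1}$ and the slope inversion should be comparatively routine.
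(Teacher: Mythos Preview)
Your proposal takes a genuinely different route from the paper, and as written it has a real gap.

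The paper does not set up $\M^{(0,\infty)}$ via a correspondence, a Mellin kernel, or a pushforward; there is no integral and no phase to which stationary-phase reasoning can be applied. The definition (Definition~\ref{def mellin zi}) is purely operator-theoretic: on the same underlying $\Bbbk$-vector space $V$ one declares $\t:=-(z\nabla)^{-1}$ and $\Phi:=z$, and the problem is to compute the operator $z$ as a series in the operator $\t$. The paper's proof does exactly this. Writing $-\t=(f+z\tfrac{d}{dz})^{-1}$ with $A=f$ (multiplication) and $B=z\tfrac{d}{dz}$, it invokes the Operator-root Lemma (Lemma~\ref{abratlemma}) to expand the fractional power $(-\t)^{r/s}=(A+B)^{-r/s}$ through the commutator term $[B,A]=zf'$. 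The leading term recovers $f=-\t^{-1}$ (your first relation), and the commutator term produces the correction $-(-a)^{r/s}\tfrac{r+s}{2s}\,\t^{1+r/s}$; a residual $\tfrac{\mathbb{Z}}{s}$ ambiguity is absorbed by Proposition~\ref{diffopprop}\eqref{diffopprop1}. No Gamma functions, Stirling asymptotics, or Hessians enter.

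Your plan therefore does not engage with the object actually being computed. To make your stationary-phase picture rigorous here you would first have to construct a kernel-and-pushforward description of the local Mellin transform in the formal setting and prove it agrees with Definition~\ref{def mellin zi}; nothing like this is available in the paper, and you have not indicated how you would build it. Your intuition that the $\tfrac{r+s}{2s}$ arises as a ``half-order'' correction is morally right, but in the paper it comes algebraically from the $\tfrac{m(m-1)}{2}A^{m-2}[B,A]$ term of the Operator-root Lemma, not from any Stirling expansion. If you want to follow the techniques of \cite{gs} as you say, that is this operator-root computation, not a stationary-phase argument.
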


A necessary tool for the calculation is the formal reduction of differential operators as well as the formal reduction of linear difference operators.  There are considerable parallels between difference operators and connections, and we refer the reader to \cite{vander2} for more details.

\subsection{Acknowledgements} The author would like to thank Dima Arinkin for his continuing assistance, support, and encouragement of this work.

\section{Connections and Difference Operators}\label{defs sec}

\subsection{Connections on the formal disk} 

\begin{definition} \label{connection} Let $V$ be a finite-dimensional vector space over $K=\Bbbk((z))$. A \emph{connection} on $V$ is a $\Bbbk$-linear operator
$\nabla:V\to V$ satisfying the Leibniz identity:
\[\nabla(fv)=f\nabla(v)+\frac{df}{dz}v\]
for all $f\in K$ and $v\in V$.  
 A choice of basis in $V$ gives an isomorphism $V\simeq K^n$; we can then write $\nabla$ as $\frac{d}{dz}+A$, where
$A=A(z)\in\gl_n(K)$ is the \emph{matrix} of $\nabla$ with respect to this basis.

\end{definition}

\begin{definition}\label{cat vs with connection}
We write $\mathcal{C}$ for the \emph{category of vector spaces with connections over} $K$. Its objects are pairs $(V, \nabla)$, where $V$ is a finite-dimensional $K$-vector space and $\nabla:V\to V$ is a connection. Morphisms between $(V_1,\nabla_1)$ and $(V_2,\nabla_2)$ are $K$-linear maps $\phi:V_1\to V_2$ that are \emph{horizontal} in the sense that $\phi\nabla_1=\nabla_2\phi$.
\end{definition}

\subsection{Properties of connections}

 We summarize below some well-known properties of connections on the formal disk. The results go back to Turrittin \cite{turritin} and Levelt \cite{levelt}; more recent references include \cite{varadar}, \cite[Sections 5.9 and 5.10]{beilinson}, \cite{malgrange}, and \cite{vander2}.

Let $q$ be a positive integer and define $K_q:=\Bbbk((z^{1/q}))$. Note that $K_q$ is the unique extension of $K$ of degree $q$.
For every $f\in K_q$, we define an object $E_f\in\mathcal{C}$ by
\[E_f=E_{f,q}=\left(K_q,\frac{d}{dz}+z^{-1}f\right).\]

In terms of the isomorphism class of an object $E_f$, the reduction procedures of \cite{turritin} and \cite{levelt} imply that we need only consider $f$ in the quotient
\begin{equation*}\label{conn equation}
\Bbbk((z^{1/q}))\Big/\left(z^{1/q}\Bbbk[[z^{1/q}]]+\frac{\mathbb{Z}}{q}\right)
\end{equation*}
where $\Bbbk[[z]]$ denotes formal \emph{power} series.
 
Let $R_q$ 
be the set of orbits for the action of the Galois group $\mathrm{Gal}(K_q/K)$ on the quotient\label{conn quotient}. Explicitly, the
Galois group is identified with the group of degree $q$ roots of unity $\eta\in \Bbbk$; the action on $f\in R_q$ is by
$f(z^{1/q})\mapsto f(\eta z^{1/q})$. Finally, denote by $R^\circ_q\subset R_q$ the set of $f\in R_q$ that cannot be represented
by elements of $K_r$ for any $0<r<q$.  Thus  $R^\circ_q$ is the locus of $R_q$ where Gal($K_q/K$) acts freely.

\begin{proposition}\label{prop} \mbox{}
\begin{enumerate}
\item \label{prop1} The isomorphism class of $E_f$ depends only on the orbit of the image of $f$ in $R_q$.
\item \label{prop2} $E_f$ is irreducible if and only if the image of $f$ in $R_q$ belongs to $R^\circ_q$. As $q$ and $f$ vary,
we obtain a complete list of isomorphism classes of irreducible objects of $\mathcal{C}$.
\item \label{prop3} Every $E\in\mathcal{C}$ can be written as
\[E\simeq\bigoplus_i(E_{f_i,q_i}\otimes J_{m_i}),\]
where the $E_{f,q}$ are irreducible, $J_m=(K^m,\frac{d}{dz}+z^{-1}N_m)$, and $N_m$ is the nilpotent Jordan block of size $m$.
\end{enumerate}
\end{proposition}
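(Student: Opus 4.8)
The plan is to handle the three parts in increasing order of depth, proving (1) by a direct rank-one computation and deriving (2) and (3) from the Levelt--Turrittin structure theory, which is the real engine and which I would cite in the form going back to \cite{turritin} and \cite{levelt}.

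First I would prove (1) by analyzing gauge transformations of rank-one objects. If $\phi:E_{f_1}\to E_{f_2}$ is multiplication by a unit $u\in K_q^\times$, then horizontality $\phi\nabla_1=\nabla_2\phi$ forces $f_1-f_2=z\,u'/u=z(\log u)'$. Writing $u=c\,z^{m/q}(1+h)$ with $c\in\Bbbk^\times$, $m\in\bigz$, and $h\in z^{1/q}\Bbbk[[z^{1/q}]]$, a termwise computation shows that $z(\log u)'$ ranges over exactly $\frac{\bigz}{q}+z^{1/q}\Bbbk[[z^{1/q}]]$ as $u$ varies; hence the isomorphism class of $E_f$ depends only on the image of $f$ in the quotient defining $R_q$. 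For the Galois part, the substitution $\sigma_\eta:z^{1/q}\mapsto\eta z^{1/q}$ is a $K$-linear automorphism of $K_q$ which, because $\eta^q=1$, commutes with $\frac{d}{dz}$; one then checks directly that $\sigma_\eta$ intertwines $\frac{d}{dz}+z^{-1}f$ with $\frac{d}{dz}+z^{-1}\sigma_\eta(f)$, giving a horizontal isomorphism $E_f\simeq E_{\sigma_\eta f}$. Combining the two computations shows the class depends only on the orbit in $R_q$.

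The substance is the structure theorem underlying (2) and (3). The strategy is to decompose $(V,\nabla)$ by slopes (a break decomposition read off from the Newton polygon of a cyclic vector); the slope-zero summand is regular singular and, since $\Bbbk$ is algebraically closed of characteristic zero, is classified by its residue, which can be put in Jordan form to produce a direct sum of pieces $E_\lambda\otimes J_m$. For each positive slope I would pass to a ramified cover $K_q$ making the slope integral, peel off the leading term as a rank-one twist $E_f$, induct on rank on the remaining factor, and finally descend back to $K$ by grouping the $q$ Galois conjugates $\{E_{\sigma_\eta f}\}$ into a single $K$-rational object. This yields the decomposition $E\simeq\bigoplus_i(E_{f_i,q_i}\otimes J_{m_i})$ of (3). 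I expect this to be the main obstacle: the hard part is establishing an honest direct-sum splitting rather than merely a filtration, together with the descent step, and this is precisely where the characteristic-zero and algebraically-closed hypotheses are used.

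Granting (3), part (2) follows formally. For irreducibility, pulling $E_{f,q}$ back along the cover gives the sum of its conjugates $\bigoplus_\eta E_{\sigma_\eta f}$; when $f\in R_q^\circ$ the Galois orbit has full size $q$ with pairwise non-isomorphic conjugates (free action), which forces $E_{f,q}$ to be irreducible over $K$, whereas if $f$ is representable in some $K_r$ with $r<q$ the stabilizer is nontrivial and $E_{f,q}$ acquires a proper subobject, hence is reducible. Completeness then follows because any irreducible $E$ has, by (3), a single summand $E_{f,q}\otimes J_m$, and $J_m$ for $m\geq 2$ is a nontrivial iterated self-extension that makes the tensor product reducible, so $m=1$ and $E\simeq E_{f,q}$ with $f\in R_q^\circ$. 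Finally, uniqueness of the decomposition in (3) (Krull--Schmidt) shows distinct orbits give non-isomorphic objects, so the list is parametrized without redundancy by $\bigsqcup_q R_q^\circ$.
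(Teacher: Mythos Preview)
Your proposal is correct and follows the standard Levelt--Turrittin route that the paper itself invokes: the paper does not actually prove this proposition but simply states that the proofs ``are either prevalent in the literature (cf.\ \cite{beilinson}, \cite{malgrange}, \cite{vander2}) or straightforward and thus are omitted.'' Your sketch---the rank-one gauge computation for (1), the slope/Newton-polygon decomposition plus ramified pullback and Galois descent for (3), and the orbit-size argument for irreducibility in (2)---is precisely the content of those references, so you are supplying what the paper leaves to citation rather than diverging from it.
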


Proofs of the proposition are either prevalent in the literature (cf. \cite{beilinson}, \cite{malgrange}, \cite{vander2}) or straightforward and thus are omitted. 

\begin{remark}
  We refer to the objects $(E_f\otimes J_m)\in \C$ as \emph{indecomposable} objects in $\C$.
\end{remark}

\subsection{Difference operators on the formal disk}\label{diffop sec}

Vector spaces with difference operator and vector spaces with connection are defined in a similar fashion.

\begin{definition} \label{diffop} Let $V$ be a finite-dimensional vector space over $K=\Bbbk((\t))$. A \emph{difference operator} on $V$ is a $\Bbbk$-linear operator
$\Phi:V\to V$ satisfying
\[\Phi(fv)=\varphi(f)\Phi(v)\]
for all $f\in K$ and $v\in V$, with $\varphi:K^n\to K^n$ as the $\Bbbk$-automorphism defined below. A choice of basis in $V$ gives an isomorphism $V\simeq K^n$; we can then write $\Phi$ as $A\varphi$, where
$A=A(\t)\in\gl_n(K)$ is the \emph{matrix} of $\Phi$ with respect to this basis, and for $v(\t)\in K^n$ we have 
$$\varphi(v(\t))=v\left(\frac{\t}{1+\t}\right)=v\left(\sum^{\infty}_{i=1}(-1)^{i+1}\t^i\right).$$

We follow the convention of \cite[Section 1]{praagman} to define $\varphi$ over the extension $K_q=\Bbbk((\t^{1/q}))$. Thus for all $q\in \mathbb{Z}^+$, $\varphi$ extends to a $\Bbbk$-automorphism of $K_q^n$ defined by 
$$\varphi\Big(v(\t^{1/q})\Big)=v\left(\t^{1/q}\left[\sum^{\infty}_{i=0}\binom{-1/q}{i}\t^i\right]\right).$$
\end{definition}

\begin{definition}\label{cat vs with diffop}
We write $\mathcal{N}$ for the \emph{category of vector spaces with invertible difference operator over} $K$. Its objects are pairs $(V, \Phi)$, where $V$ is a finite-dimensional $K$-vector space and $\Phi:V\to V$ is an invertible difference operator. Morphisms between $(V_1,\Phi_1)$ and $(V_2,\Phi_2)$ are $K$-linear maps $\phi:V_1\to V_2$ such that $\phi\Phi_1=\Phi_2\phi$.
\end{definition}

\subsection{Properties of difference operators}\label{diffop properties}

In \cite{chen} and \cite{praagman}, a canonical form for difference operators is constructed. We give an equivalent construction in the theorem below, which is a restatement of \cite[Theorem 8 and Corollary 9]{praagman} with different notation so as to better fit our situation. 

\begin{theorem}[\cite{praagman}, Theorem 8 and Corollary 9]\label{diffop canon form}
Let $\Phi:V\to V$ be an invertible difference operator.  Then there exists a finite (Galois) extension $K_q$ of $K$ and a basis of $K_q\otimes_K V$ such that $\Phi$ is expressed as a diagonal block matrix. Each block is of the form
\[
F_g=\begin{bmatrix}
	g & & \\
	\t^{\lambda +1} & \ddots &\\
	 & \ddots & \ddots
	\end{bmatrix}
	\]
with $g\in K_q$, $\lambda\in \frac{\bigz}{q}$, $g=a_0\t^{\lambda}+\dots+a_q\t^{\lambda+1}$, $a_0\neq 0$, and $a_q$ defined up to a shift by $\frac{a_0\bigz}{q}\t^{\lambda+1}$.  The matrix is unique modulo the order of the blocks.
\end{theorem}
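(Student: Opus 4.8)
The plan is to deduce the theorem from \cite[Theorem 8 and Corollary 9]{praagman} by translating Praagman's conventions into the notation fixed in Definition \ref{diffop}. In principle one could instead give a self-contained proof mimicking the Turrittin--Levelt reduction used for connections in Proposition \ref{prop}, decomposing $(V,\Phi)$ according to the slopes and leading coefficients of $\Phi$ and then normalizing each isotypic piece; but since Definition \ref{diffop} adopts the automorphism $\varphi$ of \cite[Section 1]{praagman} verbatim, it is far more economical to import his classification directly. The substance of the argument is therefore a faithful dictionary between the two presentations rather than a fresh reduction.

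The key step is to match data on the extension $K_q$. First I would check that the prescribed extension $\varphi(\t^{1/q})=\t^{1/q}\left[\sum_{i=0}^{\infty}\binom{-1/q}{i}\t^i\right]$ is the unique $\Bbbk$-automorphism of $K_q=\Bbbk((\t^{1/q}))$ restricting to $\varphi$ on $K$, so that Praagman's base-change statement applies verbatim to $K_q\otimes_K V$. I would then identify his canonical blocks with the matrices $F_g$: the diagonal entry $g=a_0\t^{\lambda}+\dots+a_q\t^{\lambda+1}$ records the leading-order datum, with invertibility forcing $a_0\neq 0$ and the slope recorded by $\lambda\in\frac{\bigz}{q}$, while the subdiagonal entries $\t^{\lambda+1}$ furnish the difference-operator analog of a nilpotent Jordan block, paralleling the role of $J_m$ in Proposition \ref{prop}(\ref{prop3}).

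With the dictionary in place, the uniqueness assertions are read off from Corollary 9: the blocks are determined up to reordering, and within a block the coefficient $a_q$ is pinned down only modulo the lattice $\frac{a_0\bigz}{q}\t^{\lambda+1}$. I would translate this residual freedom, which in Praagman's formulation reflects the choice of a representative in an orbit under the relevant shift, into the normalization stated here.

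The main obstacle I anticipate is bookkeeping rather than anything conceptual. One must confirm that the reindexing of exponents into $\frac{\bigz}{q}$ and the precise residue lattice $\frac{a_0\bigz}{q}\t^{\lambda+1}$ survive the change of notation intact, and in particular that the choice of the root $\t^{1/q}$ together with the associated Galois action (reviewed for connections above) introduces no identifications beyond those Praagman already records. Once this compatibility is verified, the theorem follows immediately.
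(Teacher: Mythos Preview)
Your proposal is correct and aligns with the paper's own treatment: the paper does not give an independent proof of this theorem but simply states it as a restatement of \cite[Theorem 8 and Corollary 9]{praagman} in adapted notation, so your plan to build a dictionary between Praagman's conventions and Definition \ref{diffop} is exactly what is called for. The bookkeeping concerns you flag (matching the extended $\varphi$ on $K_q$, the slope $\lambda\in\frac{\bigz}{q}$, and the residual $\frac{a_0\bigz}{q}\t^{\lambda+1}$ ambiguity) are the only content, and the paper handles them by fiat rather than by writing out the translation.
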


\begin{remark}
  The $F_g$ are the indecomposable components for the matrix of $\Phi$.
\end{remark}

 Theorem \ref{diffop canon form} allows us to describe the category $\N$ in a fashion similar to our description of the category $\C$.  For every $g\in K_q$, we define an object $D_g\in \N$ by
\[D_g=D_{g,q}:= \Big(K_q, g\varphi\Big). \]
The canonical form given in Theorem \ref{diffop canon form} implies that we need only consider $g$ in the following quotient of the multiplicative group $\Bbbk((\t^{1/q}))^*$:
\begin{equation}\label{kq subset}
K_q^*\Big/\left(1+\frac{\bigz}{q} \t+\t^{1+(1/q)}\Bbbk[[\t^{1/q}]]\right).
\end{equation}
 Let $S_q$ be the set of orbits for the action of the Galois group $\mathrm{Gal}(K_q/K)$ on the quotient given in \eqref{kq subset}. Denote by $S^\circ_q\subset S_q$ the set of $g\in S_q$ that cannot be represented by elements of $K_r$ for any $0<r<q$.  As before, $S^\circ_q$ can be thought of as the locus where $\mathrm{Gal}(K_q/K)$ acts freely.

\begin{proposition}\label{diffopprop} \mbox{}
\begin{enumerate}
\item \label{diffopprop1} The isomorphism class of $D_g$ depends only on the orbit of the image of $g$ in $S_q$.
\item \label{diffopprop2} $D_g$ is irreducible if and only if the image of $g$ in $S_q$ belongs to $S^\circ_q$. As $q$ and $g$ vary,
we obtain a complete list of isomorphism classes of irreducible objects of $\mathcal{N}$.
\item \label{difopprop3} Every $D\in\mathcal{N}$ can be written as
\[D\simeq\bigoplus_i(D_{g_i,q_i}\otimes T_{m_i}),\]
where the $D_{g,q}$ are irreducible, $T_m=(K^m,U_m\varphi)$, and $U_m=I_m+\t N_m$.
\end{enumerate}
\end{proposition}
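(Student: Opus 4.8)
The plan is to treat Proposition~\ref{diffopprop} as the exact difference-operator mirror of Proposition~\ref{prop}, with Theorem~\ref{diffop canon form} (Praagman's canonical form) playing the role that the Turrittin--Levelt decomposition plays for connections. The whole statement reduces to two ingredients: a criterion for when two objects $D_g, D_{g'}$ are isomorphic, and the block decomposition supplied by Theorem~\ref{diffop canon form}. First I would pin down the isomorphism criterion. Since $D_g=(K_q,g\varphi)$ is free of rank one over $K_q$, the natural isomorphisms are given by multiplication by a scalar $h\in K_q^*$; such an $h$ intertwines $g\varphi$ and $g'\varphi$ precisely when $g'=g\,h/\varphi(h)$. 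Thus part~\ref{diffopprop1} amounts to showing that two effects identify $g$ with $g'$: the scalar gauge transformations just described, and the Galois action.

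For the gauge part, the key computation is to determine the image of the multiplicative map $h\mapsto h/\varphi(h)$ on $K_q^*$ and to show it equals the subgroup $U:=1+\tfrac{\bigz}{q}\t+\t^{1+1/q}\Bbbk[[\t^{1/q}]]$ appearing in \eqref{kq subset}. Writing $h=c\,\t^{\nu}(1+\cdots)$ with $\nu\in\tfrac1q\bigz$ and using $\varphi(\t^{1/q})=\t^{1/q}(1+\t)^{-1/q}$, one finds $h/\varphi(h)=(1+\t)^{\nu}\bigl(1+O(\t^{1+1/q})\bigr)$, so the constant term is $1$ and the $\t$-coefficient is exactly $\nu\in\tfrac1q\bigz$; this explains why the lattice $\tfrac{\bigz}{q}\t$, rather than all of $\Bbbk\,\t$, occurs in the quotient. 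Surjectivity onto the tail $\t^{1+1/q}\Bbbk[[\t^{1/q}]]$ is then proved order by order: a perturbation $1+c\,\t^{m/q}$ of $h$ produces a correction $1+\tfrac{cm}{q}\t^{(m+q)/q}+\cdots$, so each target order $\ell/q\ge 1+1/q$ is hit using input order $(\ell-q)/q>0$ with the invertible factor $m/q=(\ell-q)/q\ne 0$, leaving no resonance. Hence the image is exactly $U$, and $D_g\simeq D_{g'}$ whenever $g'\in gU$. The Galois part is easier: the automorphism $\sigma\in\mathrm{Gal}(K_q/K)$, sending $\t^{1/q}\mapsto\eta\,\t^{1/q}$, commutes with $\varphi$ (both fix $\t$), so $\sigma$ is horizontal from $D_g$ to $D_{g^{\sigma}}$, proving $D_g\simeq D_{g^{\sigma}}$. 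Together these give part~\ref{diffopprop1}.

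For parts~\ref{diffopprop2} and~\ref{difopprop3} I would run the argument parallel to Proposition~\ref{prop}. Theorem~\ref{diffop canon form} writes any $(V,\Phi)$, after passing to $K_q$, as $\bigoplus_i F_{g_i}$. Each block $F_g$ of size $m$ has $g$ on the diagonal and $\t^{\lambda+1}$ on the subdiagonal, while the candidate $D_g\otimes T_m$ has matrix $g\,U_m=g(I_m+\t N_m)$, that is, $g$ on the diagonal and $g\t=a_0\t^{\lambda+1}+\cdots$ on the subdiagonal. Normalizing the subdiagonal by a diagonal change of basis absorbs the unit $g\t^{-\lambda}$, and by part~\ref{diffopprop1} the induced modification of the diagonal entries stays within the class of $g$; this identifies $F_g\simeq D_g\otimes T_m$ and yields part~\ref{difopprop3}. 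Part~\ref{diffopprop2} then follows from the free-action criterion exactly as for connections: if $g$ is representable in $K_r$ with $r<q$, the stabilizer of its class in $\mathrm{Gal}(K_q/K)$ is nontrivial and $D_{g,q}$ decomposes (it is induced from the proper subextension $K_r$), hence is reducible; whereas if $g\in S^{\circ}_q$ the Galois orbit has full size $q$ and admits no proper nonzero $\Phi$-stable subspace, so $D_g$ is irreducible. Completeness of the list is immediate from part~\ref{difopprop3}, the irreducible summands being the $D_g$ with $g\in S^{\circ}_q$ and $m=1$.

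The main obstacle is the explicit identification of the image of $h\mapsto h/\varphi(h)$ with the group $U$, since this is the nonlinear multiplicative analogue of the additive logarithmic-derivative equation that identifies the connections $E_f$ in Proposition~\ref{prop}, and it is exactly what justifies the quotient~\eqref{kq subset}; the order-shifting computation above is what shows there is no resonance except at the $\t$-coefficient, which is the sole source of the discrete lattice $\tfrac{\bigz}{q}$. A secondary technical point is keeping track, in part~\ref{difopprop3}, of the gauge change induced on the diagonal when one normalizes the subdiagonal of $F_g$; this is handled by invoking part~\ref{diffopprop1} rather than by a direct computation.
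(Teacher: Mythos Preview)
Your proposal is correct, and there is essentially nothing in the paper to compare it to: the paper states Proposition~\ref{diffopprop} without proof, treating it as a direct repackaging of Praagman's classification (Theorem~\ref{diffop canon form}), just as Proposition~\ref{prop} is stated without proof and referred to the literature. (The source file contains a commented-out fragment for part~\eqref{difopprop3} which invokes \cite[Lemma 4]{praagman} to pass from the block $F_g$ with subdiagonal $\t^{\lambda+1}$ to the matrix $g(I_m+\t N_m)$, and then writes out the tensor isomorphism $D_g\otimes T_m$ explicitly; but this is not in the compiled paper.) Your argument follows the same route---everything is deduced from Theorem~\ref{diffop canon form}---but you supply the one computation the paper leaves entirely implicit, namely that the image of the multiplicative coboundary map $h\mapsto h/\varphi(h)$ on $K_q^*$ is exactly the subgroup $1+\tfrac{\bigz}{q}\t+\t^{1+1/q}\Bbbk[[\t^{1/q}]]$; this is precisely what justifies the quotient~\eqref{kq subset}.

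One small caution on wording: when you say ``the natural isomorphisms are given by multiplication by a scalar $h\in K_q^*$,'' remember that morphisms in $\mathcal{N}$ are only $K$-linear, so not every isomorphism $D_g\to D_{g'}$ need be $K_q$-linear. This does not affect your argument for part~\eqref{diffopprop1}, since you only need to \emph{produce} an isomorphism when $g,g'$ lie in the same orbit; the converse (distinct orbits give non-isomorphic objects) is supplied by the uniqueness clause in Theorem~\ref{diffop canon form}, which you correctly invoke for part~\eqref{diffopprop2}.
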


\subsection{Notation}\label{notation sec}

 At times it is useful to keep track of the choice of local coordinate for $\mathcal{C}$ and $\N$, and we denote this with a subscript.  To stress the coordinate, we write $\mathcal{C}_0$ to indicate the coordinate $z$ at the point zero, $\C_x$ to indicate the coordinate $z-x:=z_x$ at a point $x\neq 0$, and $\mathcal{C}_{\infty}$ to indicate the coordinate $\zeta=\frac{1}{z}$ at the point at infinity.  Note that $\mathcal{C}_0$, $\C_x$ and $\mathcal{C}_{\infty}$ are all isomorphic to $\mathcal{C}$, but not canonically.  Similarly we can write $\N_{\infty}$ to indicate that we are considering $\N$ with local coordinate at infinity.  Since we only work with the point at infinity for $\N$, though, we generally omit the subscript.
 
 We also have a superscript notation for categories, but our conventions for the categories $\C$ and $\N$ are different and a potential source of confusion.  Superscript notation for vector spaces with connection is well-established, and the superscript corresponds to $slope$ (for a formal definition of slope, see \cite{katz}).  Thus, for example, we denote by $\mathcal{C}_{\infty}^{<1}$  the full subcategory of $\mathcal{C}_{\infty}$ of connections whose irreducible components all have slopes less than one ; that is, $E_f$ such that $-1< \text{ord}(f)$.

The correspondence to slope makes sense in the context of connections because all connections have nonnegative slope (i.e.~for all $E_f$ we have ord$(f)\leq 0$).  For difference operators we have no such restriction on the order of $f$, though, and thus a correspondence to slope would be artificial.  The superscripts for difference operators therefore refer to the \emph{order of irreducible components} as opposed to the slope.  Thus, for example, the notation $\N^{>0}$ indicates the full subcategory of $\N$ of difference operators whose irreducible components $D_g$ have the property that ord$(g)>0$.

\section{Tate Vector Spaces}

\subsection{The $z$-adic topology}

\begin{definition}
We define the $z$-adic topology on the vector space $V$ as follows: a \emph{lattice} is a $\Bbbk$-subspace $L\subset V$ that is of the form $L=\bigoplus_i\Bbbk[[z]]e_i$ for some basis $e_i$ of $V$ over $K$.  Then the \emph{$z$-adic topology on $V$} is defined by letting the basis of open neighborhoods of $v\in V$ be cosets $v+L$ for all lattices $L\subset V$.
\end{definition}

\begin{remark} 
An equivalent definition for the $z$-adic topology, without reference to choice of basis, is given in \cite[Section 4.2]{dima}.  The $z$-adic topology is also equivalent to the topology induced by any norm, as described in Lemma \ref{cassels lemma}.
\end{remark}

For ease of explication, we copy the remaining definitions and results in this section from \cite[Section 5.3]{dima}. For more details on Tate vector spaces, see \cite[Section 2.7.7]{drinfeld}.

\subsection{Tate vector spaces}

\begin{definition}\label{Tate vs def}
Let $V$ be a topological vector space over $\Bbbk$, where $\Bbbk$ is equipped with the discrete topology.  $V$ is \emph{linearly compact} if it is complete, Hausdorff, and has a base of neighborhoods of zero consisting of subspaces of finite codimension. Equivalently, a linearly compact space is the topological dual of a discrete space.  $V$ is a \emph{Tate space} if it has a linearly compact open subspace.
\end{definition}  

\begin{definition}\label{Tate type def}
  A $\Bbbk[[z]]$-module $M$ is \emph{of Tate type} if there is a finitely generated submodule $M'\subset M$ such that $M/M'$ is a torsion module that is `cofinitely generated' in the sense that 

\begin{center}
dim$_\Bbbk$ Ann$_z(M/M')<\infty$, where Ann$_z(M/M')=\{m\in M/M'|zm=0\}$
\end{center}
\end{definition}

\begin{lemma}\label{Tate vectspace lemma}\mbox{}
\begin{enumerate}
\item Any finitely generated $\Bbbk[[z]]$-module $M$ is linearly compact in the $z$-adic topology.
\item Any $\Bbbk[[z]]$-module of Tate type is a Tate vector space in the $z$-adic topology.
\end{enumerate}
\end{lemma}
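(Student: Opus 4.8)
The plan is to treat the two parts in order, exploiting that $\Bbbk[[z]]$ is a complete discrete valuation ring, so that its finitely generated modules are completely understood.

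For part (1) I would argue directly from the defining filtration of the $z$-adic topology. If $M$ is finitely generated over $\Bbbk[[z]]$, the submodules $z^nM$ form a base of neighborhoods of zero, and I would verify the three requirements of Definition \ref{Tate vs def} in turn. Each $z^nM$ has finite codimension, since $M/z^nM$ is a finitely generated module over $\Bbbk[[z]]/(z^n)$ and hence finite dimensional over $\Bbbk$; this supplies the base of finite-codimension subspaces. Hausdorffness is the assertion $\bigcap_n z^nM=0$, which is the Krull intersection theorem for the Noetherian local ring $\Bbbk[[z]]$. Completeness follows because a finitely generated module over a complete Noetherian local ring is complete in its $\mathfrak{m}$-adic topology, i.e. $M\simeq\varprojlim_n M/z^nM$. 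Together these three facts are exactly the statement that $M$ is linearly compact. One could instead invoke the structure theorem $M\simeq\Bbbk[[z]]^a\oplus\bigoplus_j\Bbbk[[z]]/(z^{e_j})$ and check linear compactness summand by summand, using that finite-dimensional torsion summands are discrete and that finite direct sums of linearly compact spaces are linearly compact.

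For part (2), let $M$ be of Tate type, so by Definition \ref{Tate type def} there is a finitely generated submodule $M'\subset M$ with $M/M'$ a cofinitely generated torsion module. Part (1) already gives that $M'$ is linearly compact. By Definition \ref{Tate vs def} it then suffices to exhibit $M'$ as an \emph{open} subspace of $M$ for the $z$-adic topology, since a linearly compact open subspace is precisely what makes $M$ a Tate space. I would show this by proving that $M/M'$ is discrete in the quotient topology, which is where the cofinite-generation hypothesis is used. Concretely, a cofinitely generated torsion $\Bbbk[[z]]$-module has finite-dimensional socle $\mathrm{Ann}_z(M/M')$ and hence embeds into a finite sum of copies of $K/\Bbbk[[z]]$; in the $z$-adic topology the latter is the standard discrete complement in the Tate structure $K\simeq\Bbbk[[z]]\oplus(K/\Bbbk[[z]])$, so $M/M'$ is discrete, $M'$ is open, and $M$ is a Tate space.

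The main obstacle lies in part (2), and it is as much a matter of pinning down the topology as of algebra. The definition of the $z$-adic topology recalled above is phrased for a $K$-vector space in terms of lattices, whereas a module of Tate type genuinely has torsion---witness $M=K$ with $M'=\Bbbk[[z]]$ and $M/M'=K/\Bbbk[[z]]$---so I would instead use the coordinate-free formulation from \cite[Section 4.2]{dima} and must check both that $M'$ is open for it and that the resulting topology does not depend on the auxiliary choice of $M'$. The delicate point is the divisible part $K/\Bbbk[[z]]$ of the quotient: it is infinite dimensional over $\Bbbk$ and yet must come out discrete, and it is verifying exactly this---so that $M'$, rather than some strictly larger subspace, is the correct linearly compact open piece---that is the crux of the argument.
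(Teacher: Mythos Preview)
The paper does not supply a proof of this lemma: it is among the ``definitions and results'' copied from \cite[Section~5.3]{dima}, and the text passes directly from the statement to Proposition~\ref{Tateprop}. There is thus no argument in the paper to compare your proposal against.

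Your treatment of part~(1) is the standard one and is correct as written.

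For part~(2) you have correctly located where the content lies, and your own closing paragraph already names it: the obstacle is definitional rather than algebraic. The lattice-based definition of the $z$-adic topology given in the paper applies only to finite-dimensional $K$-vector spaces, while the naive filtration $\{z^nM\}$ on a Tate-type module such as $M=K$ is useless (since $zK=K$ it yields the indiscrete topology). Once one adopts the coordinate-free definition from \cite[Section~4.2]{dima}---in effect: choose a finitely generated $M'$, endow it with its own $z$-adic topology, declare it open in $M$, and verify independence of the choice---part~(2) becomes close to tautological, since $M'$ is linearly compact by part~(1) and open by fiat. Your embedding of $M/M'$ into copies of $K/\Bbbk[[z]]$ is the right ingredient for checking that independence, but be aware that the assertion ``$K/\Bbbk[[z]]$ is discrete'' is a \emph{consequence} of this definition rather than an input to it; phrasing it as you do risks the circularity you yourself flag.
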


\begin{proposition}\label{Tateprop}
	Let $V$ be a Tate space.  Suppose an operator $Z:V\to V$ satisfies the following conditions:
	\begin{enumerate}
	\item $Z$ is continuous, open and (linearly) compact.  In other words, if $V'\subset V$ is an open linearly compact subspace, then so are $Z(V')$ and $Z^{-1}(V')$.
	\item $Z$ is contracting.  In other words, $Z^n \to 0$ in the sense that for any linearly compact subspace $V'\subset V$ and any open subspace $U\subset V$, we have $Z^n(V')\subset U$ for $n\gg 0$.
	\end{enumerate}
Then there exists a unique structure of a Tate type $\Bbbk[[z]]$-module on $V$ such that $z\in \Bbbk[[z]]$ acts as $Z$ and the topology on $V$ coincides with the $z$-adic topology.\\
\end{proposition}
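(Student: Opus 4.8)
The plan is to turn the operator $Z$ into the action of the formal variable $z$, and to let a power series act through the corresponding series in $Z$: for $f=\sum_{i\ge 0}a_iz^i\in\Bbbk[[z]]$ and $v\in V$ I would \emph{define} $f\cdot v:=\sum_{i\ge 0}a_iZ^i(v)$, the sum taken in the topology of $V$. The two hypotheses play complementary roles: the contracting hypothesis (2) is what makes these series converge, and it will also force the torsion behaviour needed for the Tate-type condition, while the openness/compactness hypothesis (1) controls the lattices $Z^n(V_0)$ and yields the finiteness statements. Throughout I would use that a Tate space is complete and Hausdorff and that $\Bbbk$ is discrete, so that a series converges as soon as its partial sums are Cauchy.

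First I would fix a linearly compact open subspace $V_0\subset V$ and replace it by a $Z$-\emph{stable} one. Since $V_0$ is open and $Z$ is contracting, $Z^n(V_0)\subseteq V_0$ for all $n\ge N$ for some $N$; then $V_0':=V_0+Z(V_0)+\cdots+Z^{N-1}(V_0)$ is again open and linearly compact (a finite sum of such subspaces) and satisfies $Z(V_0')\subseteq V_0'$, so I may assume $Z(V_0)\subseteq V_0$. Convergence of $f\cdot v$ is then immediate: given any open subspace $U$, apply hypothesis (2) to the linearly compact subspace $V_0+\Bbbk v$ to get $Z^i(v)\in U$ for all $i\ge N_0$; since $U$ is a subspace, every tail $\sum_{N\le i<M}a_iZ^i(v)$ lies in $U$, so the partial sums are Cauchy and converge by completeness. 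Checking that $f\cdot v$ is a $\Bbbk[[z]]$-module structure is then mostly routine: additivity in $f$ and $v$ is clear and $1\cdot v=v$, $z\cdot v=Z(v)$ hold by construction; the one delicate identity is multiplicativity $(fg)\cdot v=f\cdot(g\cdot v)$, where, using that each $Z^i$ is continuous and $\Bbbk$-linear and so commutes with convergent sums, both sides equal $\sum_{i,j}a_ib_jZ^{i+j}(v)$ and the identity follows from a standard rearrangement of an unconditionally convergent series in a complete, subspace-topologized space.

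The heart of the argument is the \emph{Tate-type} claim, for which I would take $M'=V_0$. That $V_0$ is finitely generated over $\Bbbk[[z]]$ I would obtain from a topological Nakayama argument: $zV_0=Z(V_0)$ is open in the linearly compact space $V_0$, hence of finite codimension, so I may pick $e_1,\ldots,e_d\in V_0$ whose images span $V_0/zV_0$; repeatedly reducing an element modulo $zV_0$ and using $\bigcap_n Z^n(V_0)=0$ (which holds because $Z$ is contracting and $V$ is Hausdorff) together with completeness of $V_0$ expresses every $v\in V_0$ as $\sum_i f_ie_i$ with $f_i\in\Bbbk[[z]]$. That $V/V_0$ is torsion is exactly hypothesis (2) applied to $V_0+\Bbbk v$, giving $Z^n(v)\in V_0$ for $n\gg 0$, i.e.\ $z^n\bar v=0$; and $\mathrm{Ann}_z(V/V_0)=Z^{-1}(V_0)/V_0$ is finite-dimensional because $Z^{-1}(V_0)$ is linearly compact by hypothesis (1) while $V_0$ is open in it. Coincidence of topologies then drops out: the $z^nV_0=Z^n(V_0)$ are open by hypothesis (1) and form a neighbourhood base of $0$ in the original topology by hypothesis (2), which is precisely the $z$-adic topology of the resulting Tate-type module (cf.\ Lemma \ref{Tate vectspace lemma}).

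For uniqueness I would note that any admissible structure makes polynomials act as the corresponding polynomials in $Z$, and then show the power-series action is forced: writing $f=f_N+z^Ng_N$ with $\deg f_N<N$, one has $f\cdot v=f_N(Z)v+Z^N(g_N\cdot v)$, and the finiteness analysis above confines all the elements $g_N\cdot v$ to a single linearly compact subspace, so $Z^N(g_N\cdot v)\to 0$ and $f\cdot v=\lim_N f_N(Z)v$ is determined by $Z$ and the fixed topology. The main obstacle, in my view, is not any single step but the bookkeeping that shows the contracting hypothesis simultaneously delivers convergence, the torsion of $V/V_0$, the separatedness $\bigcap_n Z^n(V_0)=0$ feeding topological Nakayama, and the neighbourhood-base property; extracting a clean $Z$-stable lattice $V_0$ at the outset is what makes all four uses uniform.
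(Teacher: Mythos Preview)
The paper does not prove this proposition; it is stated without proof, having been copied (together with the surrounding definitions and Lemma~\ref{Tate vectspace lemma}) from \cite[Section~5.3]{dima}, as the paper explicitly acknowledges just before Definition~\ref{Tate vs def}. There is therefore nothing in the present paper to compare your argument against.

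As a standalone proof your proposal is correct and is essentially the natural one: passing to a $Z$-stable linearly compact open $V_0$, defining the $\Bbbk[[z]]$-action by convergence of $\sum_i a_iZ^i(v)$ via hypothesis~(2), establishing finite generation of $V_0$ over $\Bbbk[[z]]$ by a topological Nakayama argument (using $\bigcap_n Z^n(V_0)=0$ and completeness), and then reading off the torsion, cofiniteness, and topology-comparison statements directly from hypotheses~(1) and~(2). The uniqueness step is also sound; the claim that the elements $g_N\cdot v$ are confined to a single linearly compact subspace holds because in any admissible Tate-type structure the cyclic submodule $\Bbbk[[z]]\cdot v$ lies in a finite-dimensional extension of the finitely generated submodule $M'$, and such an extension is linearly compact.
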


\section{The norm and order of an operator}\label{norm sec}

\subsection{Definition of norm}

In the discussion of norms in this subsection we primarily follow the conventions of \cite{cassels}, though our presentation is self-contained.  Similar treatments of norms can also be found in \cite{andre} and \cite{kedlaya}. Fix a real number $\epsilon$ such that $0<\epsilon<1$.  For $\displaystyle{f=\sum_{i=k} c_i\t^{i/q}\in K_q}$ with $c_k\neq 0$, we define the order of $f$ as ord$(f):=k/q$.

\begin{definition}\label{valuation def}
Let $f\in K$.  The valuation $|\bullet|$ on $K$ is defined as 
$$|f|=\epsilon^{\text{ord}(f)}$$
with $|0|=0$.
\end{definition}
This is a non-archimedean discrete valuation, and $K$ is complete with respect to the topology induced by the valuation.

\begin{definition}\label{na norm def}
Let $V$ be a vector space over $K$. A \emph{non-archimedean norm} on $V$ is a real-valued function $||\bullet ||$ on $V$ such that the following hold:
\begin{enumerate}
\item $||v||>0$ for $v\in V-\{0\}$.
\item $||v+w||\leq$max$(||v||,||w||)$ for all $v,w\in V$.
\item $||f\cdot v||=|f|\cdot ||v||$ for $f\in K$ and $v\in V$.
\end{enumerate}
\end{definition}

\begin{Example}\label{norm example}
Let $f_i\in K$.  The function
$$||(f_1,\dots,f_n)||=\text{max}|f_i|$$
is a norm on $K^n$, and $K^n$ is complete with respect to this norm.
\end{Example}

\begin{lemma}[\cite{cassels}, lemma in Section 2.8] \label{cassels lemma}
Any two norms $||\bullet||_1$, $||\bullet||_2$ on a finite-dimensional vector space $V$ over $K$ are equivalent in the following sense: there exists a real number $C>0$ such that
$$\frac{1}{C}||\bullet||_1\leq||\bullet||_2\leq C||\bullet||_1.$$
\end{lemma}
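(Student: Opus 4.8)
The plan is to fix a basis $e_1,\dots,e_n$ of $V$ and compare an arbitrary norm against the concrete ``max norm'' $\|\textstyle\sum_i f_i e_i\|_0 := \max_i |f_i|$ of Example \ref{norm example}. Since equivalence of norms in the stated sense is manifestly an equivalence relation (it is reflexive and symmetric, and transitive with the comparison constants multiplying when one composes two estimates), it suffices to show that \emph{every} norm $\|\bullet\|$ on $V$ is equivalent to $\|\bullet\|_0$; the general assertion $\|\bullet\|_1 \sim \|\bullet\|_2$ then follows by routing both through $\|\bullet\|_0$.

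The upper bound $\|v\| \le C\,\|v\|_0$ is immediate. Writing $v = \sum_i f_i e_i$ and applying the ultrametric inequality (property (2) of Definition \ref{na norm def}) together with homogeneity (property (3)) gives
\[
\|v\| \le \max_i \|f_i e_i\| = \max_i |f_i|\,\|e_i\| \le \Big(\max_i \|e_i\|\Big)\max_i |f_i| = C\,\|v\|_0,
\]
with $C = \max_i \|e_i\|$, which is finite and positive.

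The lower bound $\|v\| \ge c\,\|v\|_0$ is the main obstacle, and it is here that completeness of $K$ is essential. I would argue by induction on $n = \dim_K V$. For $n=1$ the two norms differ by the scalar $\|e_1\|$, so there is nothing to prove. For the inductive step, fix an index $j$ and let $W_j$ be the span of the $e_i$ with $i \ne j$. By the induction hypothesis the restriction of $\|\bullet\|$ to the $(n-1)$-dimensional space $W_j$ is equivalent to the max norm there, and the max norm is complete (Example \ref{norm example}); hence $(W_j,\|\bullet\|)$ is complete and therefore closed in $(V,\|\bullet\|)$. Since $e_j \notin W_j$, the distance $d_j := \inf_{w \in W_j}\|e_j - w\|$ is strictly positive. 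Now for any $v = \sum_i f_i e_i$ with $f_j \ne 0$, writing $v = f_j\big(e_j - (-f_j^{-1}\textstyle\sum_{i\ne j} f_i e_i)\big)$ and using homogeneity yields $\|v\| \ge |f_j|\,d_j$, i.e.\ $|f_j| \le \|v\|/d_j$; this holds trivially when $f_j=0$. Minimizing over $j$ gives $\max_i |f_i| \le \|v\|/\min_j d_j$, which is the desired lower bound with $c = \min_j d_j > 0$.

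Combining the two bounds shows $\|\bullet\| \sim \|\bullet\|_0$, completing the induction and hence the lemma. The only delicate point is the appeal to closedness of finite-dimensional subspaces, which is genuinely a completeness statement about $K$ and explains why the result can fail over non-complete valued fields; everything else is formal manipulation with the non-archimedean axioms.
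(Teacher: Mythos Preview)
Your proof is correct and is exactly the classical argument from Cassels that the paper cites; the paper itself does not reproduce a proof but simply attributes the lemma to \cite{cassels}, Section~2.8. The only remark worth making is that your inductive step relies on $(W_j,\|\bullet\|)$ being complete so that it is closed in $V$, which in turn uses the induction hypothesis together with the completeness of $K$; this is precisely how Cassels organizes the proof, so there is no discrepancy to discuss.
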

It follows from Lemma \ref{cassels lemma} that all norms on a finite-dimensional vector space over $K$ induce the same topology.

\begin{definition}\label{op norm def}
Let $A:V\to V$ be a $\Bbbk$-linear operator.  We define the \emph{norm of an operator} to be 
\[||A||=\sup_{v\in V-\{0\}}\left\{\frac{||A(v)||}{||v||}\right\}.\]
\end{definition}

Note that $||A||<\infty$ if and only if $A$ is continuous (\cite{kolmogorov}[Chapter 6, Theorem 1]).

\subsection{Invariant norms}

The norm of an operator given in Definition \ref{op norm def} depends on the choice of the non-archimedean norm $||\bullet||$.  To find an invariant for norms of operators, consider the following two norms:

\begin{definition}
The \emph{infimum} norm is defined as 
\[ ||A||_{inf}=\inf\{||A||: ||\bullet|| \text{ is a norm on }V\}\]
and the \emph{spectral radius} of $A$ is given by
 \[\d{||A||_{spec}=\lim_{n\to\infty}\sqrt[n]{||A^n||}}.\]
\end{definition}

Note that $A$ must be continuous to guarantee that the limit defining the spectral radius exists. It follows from Lemma \ref{cassels lemma} that the spectral radius does not depend on the choice of norm $||\bullet||$. 
 For operators in general the spectral radius is often the more useful invariant, but for the class of operators we consider (connections, difference operators, and their inverses) the two definitions coincide and we primarily use the infimum norm.

\subsection{Norms of similitudes}

\begin{proposition}\label{norms geq 1}
Let $||\bullet||_1$ and $||\bullet||_2$ be two norms on $V$.  Then for any invertible $\Bbbk$-linear operator $A:V\to V$, we have $||A||_1\cdot ||A^{-1}||_2 \geq 1$.
\end{proposition}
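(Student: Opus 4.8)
The plan is to avoid a direct appeal to submultiplicativity of the operator norm. The naive shortcut $\|A\|\cdot\|A^{-1}\|\ge\|AA^{-1}\|=\|I\|=1$ is valid only when a \emph{single} norm is used throughout, whereas here the two factors are measured in two different norms, so that device is unavailable. First I would dispose of the degenerate case: if $A$ fails to be continuous with respect to $\|\bullet\|_1$, then $\|A\|_1=\infty$ by the remark following Definition \ref{op norm def}, and since $A$ is invertible the operator $A^{-1}$ is nonzero, so $\|A^{-1}\|_2>0$; the product is then infinite and the inequality holds trivially. Thus I may assume both operator norms are finite.

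Next I would introduce the ratio function $\rho(x)=\|x\|_1/\|x\|_2$ on $V-\{0\}$, which records the discrepancy between the two norms. By Lemma \ref{cassels lemma} there is a constant $C>0$ with $\tfrac1C\|\bullet\|_1\le\|\bullet\|_2\le C\|\bullet\|_1$; the left-hand inequality forces $\rho\le C$, so $\rho$ is bounded above, while clearly $\rho>0$ everywhere. Hence $M:=\sup_{x\neq0}\rho(x)$ is a finite positive real number. I emphasize that no maximizer need exist: I will only use a sequence $u_n$ with $\rho(u_n)\to M$.

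The heart of the argument is a pointwise lower bound on the product. For any $v\neq0$ the definition of the operator norm gives $\|A\|_1\ge\|Av\|_1/\|v\|_1$, and, substituting $w=Av$ into the definition of $\|A^{-1}\|_2$, also $\|A^{-1}\|_2\ge\|A^{-1}(Av)\|_2/\|Av\|_2=\|v\|_2/\|Av\|_2$. Multiplying these two estimates and cancelling yields
\[
\|A\|_1\cdot\|A^{-1}\|_2\ \ge\ \frac{\|Av\|_1}{\|v\|_1}\cdot\frac{\|v\|_2}{\|Av\|_2}\ =\ \frac{\|Av\|_1}{\|Av\|_2}\cdot\frac{\|v\|_2}{\|v\|_1}\ =\ \frac{\rho(Av)}{\rho(v)}.
\]
Finally I would exploit that $A$ is a bijection of $V-\{0\}$: taking $v=A^{-1}u_n$ for the maximizing sequence above, the right-hand side becomes $\rho(u_n)/\rho(A^{-1}u_n)$, and since $\rho(A^{-1}u_n)\le M$ this is at least $\rho(u_n)/M$. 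Letting $n\to\infty$ gives $\rho(u_n)/M\to1$, whence $\|A\|_1\cdot\|A^{-1}\|_2\ge1$.

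The only genuinely delicate point is this last step. Because the two norms differ, one cannot choose a single extremal vector that simultaneously makes $\|Av\|_1/\|v\|_1$ and $\|v\|_2/\|Av\|_2$ large; the factors are in tension. The device that resolves the tension is to feed a near-maximizer of $\rho$ backward through $A^{-1}$ and play the supremum $M$ against its own attained values, so that the crude bound $\rho(A^{-1}u_n)\le M$ closes the estimate. Everything else is just the definition of the operator norm together with the norm-equivalence supplied by Lemma \ref{cassels lemma}.
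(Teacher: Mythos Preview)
Your argument is correct. The paper states Proposition~\ref{norms geq 1} without proof, so there is no approach to compare against; your write-up would serve perfectly well as the omitted justification. The disposal of the case $\|A\|_1=\infty$ is appropriate, the pointwise estimate $\|A\|_1\cdot\|A^{-1}\|_2\ge\rho(Av)/\rho(v)$ is derived correctly from the definition of the operator norm, and the choice $v=A^{-1}u_n$ for a sequence with $\rho(u_n)\to M$ closes the argument cleanly. The appeal to Lemma~\ref{cassels lemma} to guarantee $M<\infty$ is exactly what is needed, and you are right to avoid the one-norm shortcut $\|A\|\cdot\|A^{-1}\|\ge\|\mathrm{Id}\|=1$, which does not apply here.
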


\begin{corollary}\label{inf norm corollary}
Let $A:V\to V$ be invertible and $||\bullet||$ a norm such that $||A||\cdot ||A^{-1}||=1$. Then $||A||=||A||_{inf}$.
\end{corollary}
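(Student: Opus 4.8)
The plan is to prove the equality by two opposite inequalities, with the definition of the infimum norm supplying one direction for free and Proposition \ref{norms geq 1} supplying the other. Write $||\bullet||$ for the distinguished norm in the hypothesis, so that $||A||\cdot||A^{-1}||=1$, and recall that
\[
||A||_{inf}=\inf\{||A||' : ||\bullet||' \text{ is a norm on } V\},
\]
the infimum being taken over all non-archimedean norms on $V$. Since $||A||$ is the value of the operator norm computed with respect to one particular admissible norm, namely $||\bullet||$ itself, it is one of the quantities over which the infimum is taken; hence $||A||_{inf}\leq ||A||$ is immediate. The whole content of the corollary is therefore the reverse inequality $||A||\leq ||A||_{inf}$.

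For that direction, I would fix an arbitrary norm $||\bullet||'$ on $V$ and apply Proposition \ref{norms geq 1} with $||\bullet||_1=||\bullet||'$ and $||\bullet||_2=||\bullet||$ (the distinguished norm). Since $A$ is invertible, the proposition yields
\[
||A||'\cdot||A^{-1}||\geq 1,
\]
where $||A||'$ is the operator norm of $A$ measured in $||\bullet||'$ and $||A^{-1}||$ is measured in the distinguished norm. Because $A$ is invertible the hypothesis $||A||\cdot||A^{-1}||=1$ forces both factors to be finite and strictly positive, so I may write $||A^{-1}||=1/||A||$. Substituting gives $||A||'/||A||\geq 1$, i.e.\ $||A||'\geq ||A||$.

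Since the norm $||\bullet||'$ was arbitrary, this shows that $||A||$ is a lower bound for the set $\{||A||'\}$, and taking the infimum over all norms yields $||A||_{inf}\geq ||A||$. Combining with the trivial inequality $||A||_{inf}\leq ||A||$ gives $||A||=||A||_{inf}$, as claimed.

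There is no serious obstacle here; the corollary is essentially a direct unwinding of Proposition \ref{norms geq 1}. The only point requiring a moment of care is the bookkeeping of \emph{which} norm indexes \emph{which} operator when invoking the proposition: the distinguished norm $||\bullet||$ must play the role of the second norm (the one paired with $A^{-1}$) so that the equality hypothesis can be used to eliminate $||A^{-1}||$, while the arbitrary norm plays the role of the first. One should also note explicitly that $A$ invertible guarantees $||A||>0$, so that dividing by $||A||$ is legitimate.
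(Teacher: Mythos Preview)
Your proof is correct and is exactly the argument the paper has in mind: the corollary is stated without proof in the paper, as an immediate consequence of Proposition~\ref{norms geq 1}, and your derivation unwinds that dependence precisely. The only remark is that your careful discussion of which norm plays which role in the proposition, and of the positivity of $||A||$, is more detail than the paper itself supplies, but it is all correct.
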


\begin{definition}
Let $||\bullet||$ be a norm on $V$.  An operator $A:V\to V$ is a \emph{similitude} (with respect to $||\bullet||$) if $||Av||=\lambda||v||$ for all $v\in V$.  It follows that $||A||=\lambda$.
\end{definition}
\begin{claim}\label{similitude claim}
If $A:V\to V$ is an invertible similitude with $||Av||=\lambda||v||$, then $||A||_{inf}=\lambda$ and $||A^{-1}||=\frac{1}{\lambda}$.
\end{claim}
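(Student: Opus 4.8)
The plan is to reduce everything to the single observation that $A^{-1}$ is itself a similitude, and then invoke Corollary \ref{inf norm corollary}. First I would record the factor: since $A$ is a similitude with $||Av||=\lambda||v||$, the definition already gives $||A||=\lambda$, and I would note in passing that $\lambda>0$. Indeed, $A$ is invertible, so picking any $v\neq 0$ we have $Av\neq 0$, whence $0<||Av||=\lambda||v||$ forces $\lambda>0$; this is needed so that $1/\lambda$ makes sense and so that $A^{-1}$ has a well-defined positive similitude factor.

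The key step is to show that $A^{-1}$ is a similitude with factor $1/\lambda$. For an arbitrary $v\in V$, I would set $w=A^{-1}v$, so that $v=Aw$. Applying the similitude property of $A$ gives
\[
||v||=||Aw||=\lambda||w||=\lambda\,||A^{-1}v||,
\]
and dividing by $\lambda$ yields $||A^{-1}v||=\tfrac{1}{\lambda}||v||$ for all $v$. Hence $A^{-1}$ is an invertible similitude with factor $1/\lambda$, and by the same remark following the definition of similitude we conclude $||A^{-1}||=\tfrac{1}{\lambda}$. This already establishes the second assertion of the claim.

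It then remains to identify $\lambda$ with the infimum norm $||A||_{inf}$. Combining the two computed operator norms gives
\[
||A||\cdot||A^{-1}||=\lambda\cdot\tfrac{1}{\lambda}=1,
\]
so the norm $||\bullet||$ on $V$ satisfies the hypothesis of Corollary \ref{inf norm corollary}. That corollary then immediately gives $||A||=||A||_{inf}$, i.e.\ $||A||_{inf}=\lambda$, completing the proof.

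I do not expect any genuine obstacle here: the argument is a direct chain of definitions plus one application of Corollary \ref{inf norm corollary}. The only point requiring mild care is the preliminary verification that $\lambda>0$ (using invertibility of $A$ together with positivity of the norm on nonzero vectors), since the statement and the division steps tacitly assume $\lambda\neq 0$; once that is in place, the passage of the similitude property to $A^{-1}$ and the final invocation of the corollary are routine.
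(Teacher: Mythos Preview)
Your argument is correct. The paper does not actually include a proof of this claim; it is stated without proof, presumably because it follows immediately from Corollary~\ref{inf norm corollary}. Your approach---verifying that $A^{-1}$ is itself a similitude with factor $1/\lambda$, reading off $||A^{-1}||=1/\lambda$, and then invoking Corollary~\ref{inf norm corollary} from $||A||\cdot||A^{-1}||=1$---is exactly the intended one, as the placement of the claim directly after that corollary makes clear.
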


\subsection{Properties of norms}

Given the canonical form of a connection or difference operator, it is quite easy to calculate the norm. In particular we note that indecomposable connections with no horizontal sections and indecomposable invertible difference operators are similitudes. 
\begin{remark}
We introduce here notation to clear up a potentially confusing situation.  The issue is the notation $\nabla=\frac{d}{dz}+A$ for a connection.  In particular, at the local coordinate $\zeta=\frac{1}{z}$ the change of variable gives us $\nabla=-\zeta^2\frac{d}{d\zeta}+A(\zeta)$.  To emphasize the local coordinate we will use the notation $\nabla_z$ (respectively $\nabla_{\zeta}$) to indicate that we are writing $\nabla$ in terms of $z$ (respectively $\zeta$).  In particular we have the equalities $\nabla_z=-\zeta^2\nabla_\zeta$ and $z\nabla_z=-\zeta\nabla_\zeta$.  

\end{remark}
\begin{proposition}\label{normprop}
 For an indecomposable $(V, \nabla)=(E_f\otimes J_m)\in \C$ such that $\nabla$ has no horizontal sections,
\begin{enumerate}
 \item \label{normprop1} $||\nabla||_{inf}=\epsilon^{\textup{ord}(f)-1}$.

 If $\nabla$ is invertible we also have
	
 \item \label{normprop2} $||\nabla^{-1}||_{inf}=\epsilon^{-\textup{ord}(f)+1}$.
 	\vspace{.1in}
  \item \label{normprop3} For $(V,\nabla)\in \C_0$, $||(z\nabla)^{-1}||_{inf}= \epsilon^{-\textup{ord}(f)}$.
	\vspace{.1in}
  \item \label{normprop3.5} For $(V,\nabla)\in \C_{\infty}$, $||(z\nabla)^{-1}||_{inf}=||(\zeta\nabla_\zeta)^{-1}||_{inf}=\epsilon^{-\textup{ord}(f)}$.
	\vspace{.1in}
  \item \label{normprop4} For $(V,\nabla)\in \C_x$, $||(z\nabla_{z_x})^{-1}||_{inf}= \epsilon^{1-\textup{ord}(f)}$.
\end{enumerate}
\end{proposition}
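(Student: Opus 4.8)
The plan is to reduce all five assertions to the single fact that the relevant operators are \emph{similitudes} for the standard lattice norm, and then to read off the infimum norms from Claim~\ref{similitude claim}. By Proposition~\ref{prop} the indecomposable $(V,\nabla)=E_f\otimes J_m$ is realized concretely as $(K_q^m,\nabla)$ with
\[\nabla=\frac{d}{dz}+z^{-1}\big(f\,I_m+N_m\big),\]
and I would put on $K_q^m$ the norm whose unit ball is the standard lattice $L=\bigoplus_i\Bbbk[[z^{1/q}]]e_i$. Everything then rests on one computation: the operator $z\nabla=z\frac{d}{dz}+f\,I_m+N_m$ is a similitude with $\|z\nabla v\|=\epsilon^{\mathrm{ord}(f)}\|v\|$ for all $v$. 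Indeed, once this is known, $\nabla=z^{-1}(z\nabla)$ is a similitude of norm $\epsilon^{\mathrm{ord}(f)-1}$, and since no horizontal sections together with the lattice computation below forces $\nabla$ to be invertible, Claim~\ref{similitude claim} applies.

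To establish the key computation I would split on $\mathrm{ord}(f)$. In the irregular case $\mathrm{ord}(f)<0$, among the three summands of $z\nabla$ the term $f\,I_m v$ has order $\mathrm{ord}(f)+\mathrm{ord}(v)$, strictly below the orders $\geq\mathrm{ord}(v)$ of $z\frac{dv}{dz}$ and $N_m v$; the ultrametric inequality then forces $\|z\nabla v\|=|f|\,\|v\|=\epsilon^{\mathrm{ord}(f)}\|v\|$ with no cancellation, so $z\nabla$ is a similitude. In the regular singular case $\mathrm{ord}(f)=0$, write $f=c+\underline{o}(1)$ with $c\in\Bbbk^*$; on the graded piece $z^{i/q}\Bbbk^m$ the leading part of $z\nabla$ is the constant matrix $\big(\tfrac{i}{q}+c\big)I_m+N_m$. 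This is exactly where the no-horizontal-sections hypothesis is used: it says precisely that $c\notin-\tfrac{\bigz}{q}$, so $\tfrac{i}{q}+c\neq 0$ for every $i$ and each leading matrix is triangular with nonzero, order-zero diagonal, hence invertible over $\Bbbk$. A filtration (successive-approximation) argument over the complete ring $\Bbbk[[z^{1/q}]]$ then upgrades this to $z\nabla(L)=L$, so $z\nabla$ is a similitude of norm $\epsilon^{0}=\epsilon^{\mathrm{ord}(f)}$. I expect this regular case---managing the Jordan block and matching invertibility exactly to the no-horizontal-sections condition---to be the main obstacle, the irregular case being immediate from the ultrametric property.

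Granting the key computation, the five parts follow by bookkeeping with units and uniformizers. Applying Claim~\ref{similitude claim} to the similitude $\nabla$ of norm $\epsilon^{\mathrm{ord}(f)-1}$ gives part~\ref{normprop1}, and applying it to $\nabla^{-1}$ gives part~\ref{normprop2}. For part~\ref{normprop3}, $z\nabla$ is a similitude of norm $\epsilon^{\mathrm{ord}(f)}$, so $(z\nabla)^{-1}$ has infimum norm $\epsilon^{-\mathrm{ord}(f)}$. For part~\ref{normprop3.5} I would first invoke the identity $z\nabla_z=-\zeta\nabla_\zeta$ from the preceding remark, which shows the two operators differ only by a sign and hence have equal inverse norms, and then rerun the previous computation in the coordinate $\zeta$, the uniformizer at infinity. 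Finally, part~\ref{normprop4} is the single place the nonzero base point intervenes: in the coordinate $z_x=z-x$ the uniformizer is $z_x$, whereas $z=x+z_x$ has order $0$ and norm $1$, so $z\nabla_{z_x}=(x+z_x)\nabla_{z_x}$ is a similitude of the same norm $\epsilon^{\mathrm{ord}(f)-1}$ as $\nabla_{z_x}$; its inverse then has infimum norm $\epsilon^{1-\mathrm{ord}(f)}$. This unit-versus-uniformizer distinction is exactly what produces the shift between the exponents of parts~\ref{normprop3} and~\ref{normprop4}.
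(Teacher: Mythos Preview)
Your proposal is correct and follows precisely the approach the paper itself indicates. The paper does not actually supply a detailed proof of Proposition~\ref{normprop}: it simply remarks that ``indecomposable connections with no horizontal sections \ldots\ are similitudes'' and that from the canonical form ``it is quite easy to calculate the norm,'' then states the proposition. Your argument fills in exactly those omitted details---verifying the similitude property via the lattice filtration (splitting on $\mathrm{ord}(f)<0$ versus $\mathrm{ord}(f)=0$), and then invoking Claim~\ref{similitude claim} together with the unit/uniformizer bookkeeping in each coordinate chart---so there is nothing to contrast.
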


\begin{proposition}\label{diffop norm prop}
For an indecomposable $(V, \Phi)=(D_g\otimes T_m)\in \N$,
\begin{enumerate}
 \item \label{diffop normprop1} $||\Phi||_{inf}=\epsilon^{\textup{ord}(g)}$.
 \vspace{.1in}
 \item \label{diffop normprop2} $||(\t\Phi)^{-1}||_{inf}=\epsilon^{-\textup{ord}(g)-1}$.

\end{enumerate}
\end{proposition}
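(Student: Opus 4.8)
The plan is to exhibit a single non-archimedean norm on $V$ with respect to which $\Phi$ (and, for the second part, $\t\Phi$) is a similitude, and then invoke Claim \ref{similitude claim}, which converts the similitude factor directly into the infimum norm. Since the infimum norm is an isomorphism invariant of $(V,\Phi)$, I would first replace the indecomposable $(D_g\otimes T_m)$ by the explicit model $\bigl(K_q^m,\,g(I_m+\t N_m)\varphi\bigr)$ arising from the tensor-product description in Proposition \ref{diffopprop}, and equip $K_q^m$ with the maximum norm $||(v_1,\dots,v_m)||=\max_i|v_i|$ of Example \ref{norm example}. This is a genuine $K$-norm after restriction of scalars, which is all that the infimum-norm machinery requires.

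The computation then factors $\Phi=g\cdot(I_m+\t N_m)\cdot\varphi$ into three pieces whose effect on the norm I track in turn. First, I would check that $\varphi$ is an isometry: since $\varphi(\t^{1/q})=\t^{1/q}(1+\t)^{-1/q}$ has leading term $\t^{1/q}$, applying $\varphi$ to any $f=\sum_{i\geq k}c_i\t^{i/q}$ preserves both $\mathrm{ord}(f)$ and the leading coefficient $c_k$, so $|\varphi(f)|=|f|$ and hence $||\varphi(v)||=||v||$ coordinatewise. Second, I would show that the unipotent factor $I_m+\t N_m$ is a similitude with factor $1$: it alters each coordinate $v_j$ by adding a term $\t v_{j\pm1}$ of strictly smaller norm $\epsilon|v_{j\pm1}|<||v||$, so the ultrametric inequality gives $||(I_m+\t N_m)v||\le||v||$, while the coordinate realizing the maximum is left unchanged in norm (again by strictness of the ultrametric estimate), forcing equality. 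Multiplication by the scalar $g\in K_q$ scales every norm by $|g|=\epsilon^{\mathrm{ord}(g)}$. Composing these, $||\Phi v||=\epsilon^{\mathrm{ord}(g)}||v||$ for all $v$, so $\Phi$ is an invertible similitude with factor $\epsilon^{\mathrm{ord}(g)}$, and Claim \ref{similitude claim} yields $||\Phi||_{inf}=\epsilon^{\mathrm{ord}(g)}$, proving (1).

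For (2), the same norm makes $\t\Phi$ a similitude with factor $|\t|\cdot\epsilon^{\mathrm{ord}(g)}=\epsilon^{1+\mathrm{ord}(g)}$, so its inverse $(\t\Phi)^{-1}$, which exists because $\Phi$ is invertible and $\t$ is a unit, is itself a similitude with factor $\epsilon^{-1-\mathrm{ord}(g)}$; applying Claim \ref{similitude claim} to $(\t\Phi)^{-1}$ gives $||(\t\Phi)^{-1}||_{inf}=\epsilon^{-1-\mathrm{ord}(g)}$.

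I expect the only real obstacle to be the second step, namely verifying that the block $I_m+\t N_m$ is an \emph{exact} similitude (factor $1$) rather than merely norm-nonincreasing. This is precisely where the non-archimedean nature of the norm is essential: the off-diagonal entries $\t$ have strictly positive order and therefore cannot disturb the coordinate that attains the maximum. The isometry of $\varphi$ and the scaling by $g$ are routine, and the passage from similitude factors to infimum norms is entirely supplied by Claim \ref{similitude claim}.
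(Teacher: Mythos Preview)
Your argument is correct and follows exactly the route the paper indicates: the sentence preceding Proposition~\ref{normprop} states that indecomposable invertible difference operators are similitudes, and your proof supplies the details of this claim (isometry of $\varphi$, norm-preservation by $I_m+\t N_m$, scaling by $g$) before invoking Claim~\ref{similitude claim}. The paper omits the proof as ``quite easy,'' and your write-up is a faithful expansion of that omitted computation.
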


\subsection{Order of an operator}

The $order$ of an operator is a notion closely related to the norm of an operator. It is often more convenient to work with order as opposed to norm, so we give a brief introduction to order below.

\begin{definition}\label{order def}
Let $B:V\to V$ be a $\Bbbk$-linear operator and $||\bullet ||$ a norm defined on $V$.  Then the $order$ of $B$ is 
$$\text{Ord}(B)=\log_{\epsilon}||B||_{spec},$$
with $\O(0):=\infty$. 
\end{definition}

\begin{Example}\label{order remark}
The term ``order'' is suggestive for the following reason.  Given Definition \ref{order def}, the properties of similitudes, and $\nabla$ an indecomposable connection with no horizontal sections, the following property holds: Ord($\nabla)=\ell$ if and only if for all $n\in\mathbb{Q}$ we have $\nabla(z^n1)=(*z^{n+\ell})1+$ higher order terms, where 1 is the identity element of $V$.  Similarly for an indecomposable difference operator $\Phi$, Ord($\Phi)=j$ if and only if $\Phi(\t^n1)=(*\t^{n+j})1 +$ higher order terms.  Note that $*\in \Bbbk-\bigz$ if $\ell=-1$ and $*\in\Bbbk$ otherwise.
\end{Example}

In the context of the order of an operator, we can state the results of Propositions \ref{normprop} and \ref{diffop norm prop} as follows.
\begin{corollary}[to Propositions \ref{normprop} and \ref{diffop norm prop}]\label{ordercorollaries}
For indecomposable $(V,\nabla)=(E_f\otimes J_m)$, with $z\nabla$ invertible, in either $\C_0$ or $\C_{\infty}$ we have
\begin{enumerate}
\item \label{ordercorollaries1} \textup{Ord}$(\nabla)=\textup{ord}(f)-1$, \textup{Ord}$\Big(z\nabla\Big)=\textup{ord}(f)$, and \textup{Ord}$\Big((z\nabla)^{-1}\Big)=-\textup{ord}(f)$.\\
\vspace{.1in}
For indecomposable $(V,\nabla_{z_x})=(E_f\otimes J_m)\in \C_x$, with $z\nabla_{z_x}$ invertible,
\item \label{ordercorollaries1.5} $\O\Big(z\nabla_{z_x}\Big)=\O\Big(\nabla_{z_x}\Big)=\textup{ord}(f)-1$ and \textup{Ord}$\Big((z\nabla_{z_x})^{-1}\Big)=1-\textup{ord}(f)$.\\
\vspace{.1in}
For indecomposable $(V,\Phi)=(D_g\otimes U_m)\in\N$ 
\item \label{ordercorollaries2} $\textup{Ord}(\Phi)=\textup{ord}(g)$ and $\textup{Ord}\Big((\t\Phi)^{-1}\Big)=-\textup{ord}(g)-1$.
\end{enumerate}
\end{corollary}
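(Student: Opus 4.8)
The plan is to treat this corollary as a direct translation of the infimum-norm computations in Propositions \ref{normprop} and \ref{diffop norm prop} into the language of order, via the defining relation $\text{Ord}(B)=\log_\epsilon||B||_{spec}$. The first step is to justify replacing $||\bullet||_{spec}$ by $||\bullet||_{inf}$: as noted preceding Proposition \ref{normprop}, the operators under consideration (indecomposable connections with no horizontal sections, indecomposable invertible difference operators, and their inverses) are similitudes, and for similitudes the spectral radius and infimum norm coincide. Hence for each operator $B$ appearing in the statement we may compute $\text{Ord}(B)=\log_\epsilon||B||_{inf}$ and simply read off the exponents supplied by the two propositions.

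For the operators whose infimum norm is given directly, the computation is immediate: applying $\log_\epsilon$ to $||\nabla||_{inf}=\epsilon^{\text{ord}(f)-1}$ (Proposition \ref{normprop}, part \eqref{normprop1}) yields $\text{Ord}(\nabla)=\text{ord}(f)-1$; applying it to $||(z\nabla)^{-1}||_{inf}=\epsilon^{-\text{ord}(f)}$ (parts \eqref{normprop3} and \eqref{normprop3.5}) yields $\text{Ord}((z\nabla)^{-1})=-\text{ord}(f)$; and likewise $||\Phi||_{inf}=\epsilon^{\text{ord}(g)}$ together with $||(\t\Phi)^{-1}||_{inf}=\epsilon^{-\text{ord}(g)-1}$ gives the two difference-operator orders in part \eqref{ordercorollaries2}. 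The remaining quantities, namely $\text{Ord}(z\nabla)$ in part \eqref{ordercorollaries1} and $\text{Ord}(z\nabla_{z_x})$ in part \eqref{ordercorollaries1.5}, are handled by the similitude bookkeeping of Claim \ref{similitude claim}: since $z\nabla$ is an invertible similitude with some factor $\lambda$, we have $||z\nabla||_{inf}=\lambda=1/||(z\nabla)^{-1}||_{inf}$, so that $\text{Ord}(z\nabla)=-\text{Ord}((z\nabla)^{-1})=\text{ord}(f)$. Equivalently, a product of similitudes is again a similitude whose factors multiply, so order is additive on this class, and combining $\text{Ord}(\nabla)$ with the order of multiplication by $z$ recovers $\text{Ord}(z\nabla)$ directly.

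The one point requiring genuine care is the identity $\text{Ord}(z\nabla_{z_x})=\text{Ord}(\nabla_{z_x})$ in the $\C_x$ case. Here the local coordinate is $z_x=z-x$ with $x\neq 0$, so $z=z_x+x$ has a nonzero constant term; thus $\text{ord}(z)=0$ and multiplication by $z$ is an isometry (a similitude with factor $\epsilon^0=1$). By additivity of order on similitudes this gives $\text{Ord}(z\nabla_{z_x})=\text{Ord}(z)+\text{Ord}(\nabla_{z_x})=\text{Ord}(\nabla_{z_x})=\text{ord}(f)-1$, in contrast to the $\C_0$ and $\C_\infty$ cases where $\text{ord}(z)=1$ shifts the order by one. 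The claim $\text{Ord}((z\nabla_{z_x})^{-1})=1-\text{ord}(f)$ then follows either by applying $\log_\epsilon$ to Proposition \ref{normprop}, part \eqref{normprop4}, or again from the similitude relation $\text{Ord}(A^{-1})=-\text{Ord}(A)$. I do not expect any real obstacle beyond keeping the coordinate conventions straight: the substantive content lives entirely in the two norm propositions, and this corollary only repackages it.
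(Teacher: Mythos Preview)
Your proposal is correct and matches the paper's intended approach: the corollary is stated without proof precisely because it is the direct $\log_\epsilon$ translation of Propositions \ref{normprop} and \ref{diffop norm prop}, using the identification of spectral radius with infimum norm for similitudes noted just before Proposition \ref{normprop}. Your handling of the extra cases via Claim \ref{similitude claim} and the observation that $z=z_x+x$ is a unit in $\C_x$ (hence multiplication by $z$ has order zero there) is exactly the bookkeeping the paper leaves implicit.
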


\section{Lemmas}\label{lemmas sec}

\subsection{Fractional powers of an operator}

The Operator-root Lemma below shows how to calculate the power of a sum of certain operators, even for fractional powers.  The idea is that once a certain root (1/$p$) of the operator is chosen, the fractional power is easily defined as an integer power of that root.

\begin{lemma}[Operator-root Lemma]\label{abratlemma}
    Let $A$ and $B$ be the following $\Bbbk$-linear operators on $K_q$: $A=\text{ multiplication by } f= az^{p/q}+ \underline{o}(z^{p/q})$, $0\neq a \in \Bbbk$, and $B=z^n\frac{d}{dz}$ with $n\neq 0$, $p\neq 0,$ and $q> 0$ all integers.  We have Ord$(A)=\frac{p}{q}$ and  Ord$(B)=n-1$, and we assume that $\frac{p}{q}< n-1$. Then for any $p^{\text{th}}$ root of $A$ we can choose a $p^{\text{th}}$ root of $(A+B)$, $(A+B)^{1/p}$, such that 
\begin{equation*}
        (A+B)^{m}=A^{m}+mA^{(m-1)}B+\frac{m(m-1)}{2}A^{m-2}[B,A] + \underline{o}(z^{(p/q)(m-1)+n-1})
    \end{equation*} holds for all $m\in \frac{\mathbb{Z}}{p}$ where $(A+B)^{m}=((A+B)^{1/p})^{pm}$.
\end{lemma}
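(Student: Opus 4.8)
The plan is to isolate the two structural facts that make the claimed expansion work and then bootstrap from integer exponents to all of $\frac{1}{p}\mathbb{Z}$ through a single multiplicativity identity. First I would record the commutator: since $A$ is multiplication by $f$ and $B=z^n\frac{d}{dz}$, a direct computation gives $[B,A]=$ multiplication by $z^nf'$, an operator of order $\frac{p}{q}+n-1$; crucially it is again a multiplication operator, hence commutes with every power $A^m$ (here $A^m$ is multiplication by $f^m\in K_q$, legitimate since $f=az^{p/q}+\underline{o}(z^{p/q})$ with $a\neq 0$ and $\Bbbk$ algebraically closed). This commutativity upgrades the plain Leibniz rule to the clean identity $BA^m=A^mB+mA^{m-1}[B,A]$, valid for every $m\in\frac{1}{p}\mathbb{Z}$. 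Writing $P(m):=A^m+mA^{m-1}B+\frac{m(m-1)}{2}A^{m-2}[B,A]$ for the proposed main part, the three summands have orders $\frac{p}{q}m$, $\frac{p}{q}(m-1)+n-1$ and $\frac{p}{q}(m-1)+n-1$; the hypothesis $\frac{p}{q}<n-1$ is exactly what makes $A^m$ strictly dominant and the error target $\underline{o}(z^{(p/q)(m-1)+n-1})$ meaningful.

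The engine of the whole argument is the approximate multiplicativity
\[P(m_1)P(m_2)=P(m_1+m_2)+\underline{o}\big(z^{(p/q)(m_1+m_2-1)+n-1}\big),\qquad m_1,m_2\in\tfrac1p\mathbb{Z}.\]
To prove it I would expand the nine products, push every $B$ to the right using the commutation identity above, and discard the terms carrying two factors of $B$ (or two commutators), whose orders exceed the target precisely because $(n-1)-\frac{p}{q}>0$. The surviving $B$-coefficient collects to $(m_1+m_2)$ and the surviving $[B,A]$-coefficient to $m_1m_2+\frac{m_1(m_1-1)}{2}+\frac{m_2(m_2-1)}{2}=\frac{(m_1+m_2)(m_1+m_2-1)}{2}$, reproducing $P(m_1+m_2)$. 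The positive-integer case of the Lemma is the special instance where one expands $(A+B)^m$ as a sum over words in $A,B$: words with at least two $B$'s are $\underline{o}$ by the same order count, and the single-$B$ words reorganize, via $A^iBA^{m-1-i}=A^{m-1}B+(m-1-i)A^{m-2}[B,A]$ and $\sum_{i}(m-1-i)=\binom{m}{2}$, into exactly $P(m)$.

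Next I would construct the root. Iterating the multiplicativity identity gives $P(1/p)^p=P(1)+\underline{o}(z^{n-1})=(A+B)+\underline{o}(z^{n-1})$, so $P(1/p)$ is a $p$th root of $A+B$ up to an error $R$ of order $>n-1$. I would then correct $P(1/p)$ to an exact root by successive approximation in the algebra of continuous operators on $K_q$, completed in the order filtration: seeking $C=P(1/p)+\delta$, the linearized equation $\sum_{i=0}^{p-1}C_0^{\,i}\,\delta\,C_0^{\,p-1-i}=-R+\underline{o}$ has leading part multiplication by $p f^{(p-1)/p}\neq0$, hence is invertible, so each iteration strictly raises the order of the residual and the scheme converges. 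A bookkeeping check shows the total correction $\delta$ has order $>\frac{p}{q}(\frac1p-1)+n-1$, i.e.\ $C=P(1/p)+\underline{o}$. I expect this existence-and-convergence step to be the main obstacle, since it is the only place requiring genuine analysis (completeness of the operator algebra and invertibility of the linearization) rather than the bookkeeping of orders; choosing a different $p$th root of $A$ at the outset simply produces the corresponding root $C$.

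Finally, with $C$ in hand I would set $(A+B)^{1/p}:=C$ and $(A+B)^m:=C^{pm}$ for $m\in\frac1p\mathbb{Z}$, and induct on $pm\in\mathbb{Z}$: the base cases are $C=P(1/p)+\underline{o}$ and its inverse $C^{-1}=P(-1/p)+\underline{o}$ (legitimate because $C$ has invertible leading term $A^{1/p}$), and the inductive step multiplies by $C$ or $C^{-1}$, invoking the multiplicativity identity together with the observation that any product involving the $\underline{o}$-correction stays in the target $\underline{o}$ by superadditivity of order under composition and completeness. This yields $(A+B)^m=P(m)+\underline{o}(z^{(p/q)(m-1)+n-1})$ for all $m\in\frac1p\mathbb{Z}$, which is the assertion of the Lemma.
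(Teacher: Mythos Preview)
The paper does not actually prove this lemma; it simply cites \cite[Lemma~4.4]{gs} and gives no argument of its own. So there is nothing in the text to compare against beyond that pointer.

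Your plan is a sound self-contained route. The two structural observations you isolate---that $[B,A]$ is again a multiplication operator (hence commutes with every $A^m$), and the resulting clean identity $BA^m=A^mB+mA^{m-1}[B,A]$---are exactly what makes the bookkeeping close up, and your verification of the approximate multiplicativity $P(m_1)P(m_2)=P(m_1+m_2)+\underline{o}$ is correct: the $B$-coefficients add and the $[B,A]$-coefficients combine via $m_1m_2+\binom{m_1}{2}+\binom{m_2}{2}=\binom{m_1+m_2}{2}$, while every term with two $B$'s or two commutators lands in the error because $(n-1)-p/q>0$. The integer case is then immediate, and the bootstrap to $\frac{1}{p}\mathbb{Z}$ via a chosen $p$th root is the right idea.

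The one place to be careful is precisely the step you flag: the Hensel-type correction of $P(1/p)$ to an exact root. The residual $R=P(1/p)^p-(A+B)$ is not a pure multiplication operator (it involves $B$ and higher commutators), so the linearized equation $\sum_i C_0^{\,i}\,\delta\,C_0^{\,p-1-i}=-R$ must be solved in the full algebra of continuous operators, not just among multiplication operators. Your claim that the leading part is ``multiplication by $pf^{(p-1)/p}$'' should be read as: modulo operators of strictly higher order, the map $\delta\mapsto\sum_iC_0^{\,i}\delta C_0^{\,p-1-i}$ agrees with left composition by $pA^{(p-1)/p}$, which is invertible since $a\neq0$. That is correct, but you should make explicit that you are working in the completion of the operator algebra with respect to the $\mathrm{Ord}$-filtration and that the Newton step strictly increases the order of the residual; once that is pinned down, the convergence and the order bound on the total correction $\delta$ follow. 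With that caveat handled, the induction on $pm\in\mathbb{Z}$ using approximate multiplicativity and superadditivity of order under composition finishes the argument as you describe.
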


\begin{proof}
	A full proof is found in \cite[Lemma 4.4]{gs}.
\end{proof}

\subsection{Tate Vector Space Lemmas}

We also need some lemmas describing our situation in the language of Tate vector spaces.  The proofs are straightforward and are omitted.

\begin{lemma}\label{cont iff slope lemma}
  Let $Z:V\to V$ be a $\Bbbk$-linear operator. If Ord$(Z)>0$, then $Z$ is contracting
\end{lemma}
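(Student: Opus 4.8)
The plan is to unwind the definition of $\O(Z)$ into a statement about a fixed norm and then verify the topological contracting condition of Proposition \ref{Tateprop} directly. First I would observe that since $0<\epsilon<1$, the function $\log_{\epsilon}$ is decreasing, so the hypothesis $\O(Z)=\log_{\epsilon}||Z||_{spec}>0$ is equivalent to $||Z||_{spec}<1$. Fix any non-archimedean norm $||\bullet||$ on $V$; by Lemma \ref{cassels lemma} the spectral radius is independent of this choice. Because $||Z||_{spec}=\lim_{n\to\infty}\sqrt[n]{||Z^n||}<1$, I can pick a real number $\rho$ with $||Z||_{spec}<\rho<1$ together with an integer $N$ so that $||Z^n||<\rho^n$ for all $n\geq N$; in particular $||Z^n||\to 0$ as $n\to\infty$.

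Next I would translate the Tate-space data into norm estimates. Since the norm topology agrees with the $z$-adic topology (the remark following the definition of the $z$-adic topology, together with Lemma \ref{cassels lemma}), any linearly compact subspace $V'\subset V$ is bounded: it is contained in a lattice, and a lattice $\bigoplus_i\Bbbk[[z]]e_i$ satisfies $||v||\leq M$ for all of its elements once the norm is normalized against the basis $\{e_i\}$. Dually, any open subspace $U\subset V$ is a neighborhood of $0$ and hence contains a ball $\{v:||v||<\delta\}$ for some $\delta>0$.

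Finally I would combine these ingredients. Given a linearly compact $V'$ and an open $U$, choose $M$ bounding $V'$ and $\delta$ as above. For every $n$ large enough that $||Z^n||<\delta/M$, each $v\in V'$ satisfies
\[
||Z^n(v)||\leq ||Z^n||\cdot ||v||<\frac{\delta}{M}\cdot M=\delta,
\]
so $Z^n(v)\in U$; that is, $Z^n(V')\subset U$ for $n\gg 0$, which is exactly the contracting condition of Proposition \ref{Tateprop}.

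The main obstacle is the middle step: rigorously matching the topological notions of \emph{linearly compact subspace} and \emph{open subspace} with boundedness and containment-of-a-ball in the norm. This requires knowing that in our finite-dimensional setting the linearly compact subspaces are precisely the bounded ones (equivalently, those contained in a lattice), and that open subspaces contain norm balls---facts that follow from the equivalence of the $z$-adic and norm topologies but that should be stated carefully. Everything else reduces to the standard spectral-radius estimate and the submultiplicativity of the operator norm.
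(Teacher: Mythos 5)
Your proof is correct; the paper omits the proof of this lemma as ``straightforward,'' and your argument---converting $\O(Z)>0$ into $||Z^n||\to 0$ via the spectral radius, and pairing the uniform bound $||v||\le M$ on a linearly compact $V'$ (contained in a lattice) with a ball $\{v: ||v||<\delta\}$ inside any open subspace $U$---is exactly the intended routine verification of the contracting condition in Proposition \ref{Tateprop}. One small caution: your closing parenthetical that the linearly compact subspaces are \emph{precisely} the bounded ones overstates matters (a bounded subspace need not be complete, hence need not be linearly compact), but your argument only uses the true direction, namely linearly compact $\Rightarrow$ open subspaces such as $V'\cap L$ have finite codimension in $V'$ $\Rightarrow$ $V'$ is contained in a lattice $\Rightarrow$ $V'$ is bounded, so nothing in the proof is affected.
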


\begin{lemma}\label{tate vs fin dim lemma}
A $K$-vector space $V$ is of Tate-type if and only if it is finite dimensional.
\end{lemma}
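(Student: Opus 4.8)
The plan is to prove both implications directly, viewing the $K$-vector space $V$ as a $\Bbbk[[z]]$-module by restriction of scalars and recalling that $\Bbbk[[z]]$ is a discrete valuation ring with uniformizer $z$.

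For the forward direction, suppose $\dim_K V = n < \infty$. Choosing a $K$-basis gives an isomorphism $V \simeq K^n$, and I would take $M' = \Bbbk[[z]]^n$, which is finitely generated over $\Bbbk[[z]]$. Then $V/M' \simeq (\Bbbk((z))/\Bbbk[[z]])^n$. Since every Laurent series is annihilated modulo $\Bbbk[[z]]$ by a sufficiently high power of $z$, this quotient is a torsion module. It remains to verify the cofinite-generation condition: a direct computation shows that $\text{Ann}_z(\Bbbk((z))/\Bbbk[[z]])$ is the one-dimensional $\Bbbk$-subspace spanned by the class of $z^{-1}$, so that $\dim_\Bbbk \text{Ann}_z(V/M') = n < \infty$. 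Hence $V$ is of Tate type.

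For the converse, suppose $V$ is of Tate type, so there is a finitely generated $\Bbbk[[z]]$-submodule $M' \subset V$ with $V/M'$ torsion. Let $m_1,\dots,m_r$ generate $M'$ over $\Bbbk[[z]]$ and set $W = \sum_i K m_i$, the $K$-span of $M'$ inside $V$; then $W$ is a $K$-subspace with $\dim_K W \leq r < \infty$. I would then argue that $V = W$: because $V/M'$ is torsion, each $v \in V$ satisfies $f v \in M'$ for some nonzero $f \in \Bbbk[[z]]$, and writing $f = z^k u$ with $u$ a unit of $\Bbbk[[z]]$ gives $z^k v = u^{-1}(fv) \in M' \subset W$. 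Since $W$ is a $K$-subspace and $z^{-k} \in K$, this forces $v = z^{-k}(z^k v) \in W$. Therefore $V = W$ is finite-dimensional over $K$.

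The argument presents no serious obstacle; the only step requiring care is the explicit identification of $\text{Ann}_z(\Bbbk((z))/\Bbbk[[z]])$ as the line through $z^{-1}$ in the forward direction. I would also remark that the cofinite-generation hypothesis plays no role in the converse—finite generation of $M'$ together with the torsion condition on $V/M'$ already forces $V = W$—so the two halves of the equivalence use genuinely complementary parts of the definition of Tate type.
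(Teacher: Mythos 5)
Your proof is correct; the paper omits the proof of this lemma as ``straightforward,'' and your argument is exactly the standard one it has in mind: restrict scalars along $\Bbbk[[z]]\subset K$, take $M'=\Bbbk[[z]]^n$ in the forward direction (where your computation that $\mathrm{Ann}_z\bigl(K/\Bbbk[[z]]\bigr)$ is the line spanned by the class of $z^{-1}$ is right), and clear denominators via $f=z^ku$ with $u$ a unit for the converse. Your closing observation---that the cofinite-generation condition is used only in the forward direction, while the converse needs only finite generation of $M'$ and the torsion property of $V/M'$---is also accurate.
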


\section{Global Mellin transform}

  The `classical' Mellin transform can be stated as follows:  for an appropriate $f$ the Mellin transform of $f$ is given by
\[ \tilde{f}(\eta)=\int^\infty_0 z^{\eta-1}f(z)dz
\]  
and one can check that the following identities hold:
\begin{itemize}
\item $\eta\tilde{f}=-\widetilde{\left(z\frac{d}{dz}f\right)}$
\item $\Phi\tilde{f}=\widetilde{(zf)}$
\end{itemize}
where $\Phi$ is the difference operator taking $\tilde{f}(\eta)$ to $\tilde{f}(\eta+1)$.

This leads to the notion of the $global$ Mellin transform for connections on a punctured formal disk, which was introduced by Laumon in \cite{laumonmellin} and also presented in \cite{loeser}. Below is our definition for the global Mellin transform, which is equivalent to Laumon's.

\begin{definition}
	The global Mellin transform $\M:\Bbbk[z, z^{-1}] \langle\nabla\rangle \to \Bbbk[\eta] \langle\Phi, \Phi^{-1}\rangle$ is a homomorphism between algebras defined on its generators by $-z\nabla\mapsto \eta$ and $z\mapsto \Phi$.  Note that we have $[\nabla, z]=1$ for the domain and $[\Phi,\eta]=\Phi$ for the target space, and the homomorphism preserves these equalities.
\end{definition}

 As in the case of the Fourier transform, we derive our definition of the local Mellin transform from the global situation.  In particular, the local Mellin transform has different `flavors' depending on the point of singularity, so we refer to them as local Mellin transform\emph{s}.

\section{Definitions of local Mellin transforms}\label{LMT defs sec}

Below we give definitions of the local Mellin transforms.  To alleviate potential confusion, let us explain the format we will use for the definitions.  We begin by stating the definition in its entirety, but it is not \emph{a priori} clear that all statements of the definition are true.  We then claim that the transform is in fact well-defined and give a proof to clear up the questionable parts of the definition.

\begin{definition}\label{def mellin zi}
  Let $E=(V,\nabla)\in\C_0^{>0}$. Thus all indecomposable components of $\nabla$ have slope greater than zero,  so each indecomposable component $E_f\otimes J_m$ has ord$(f)<0$.  Consider on $V$ the $\Bbbk$-linear operators
\begin{equation}\label{eq mellin zi}
  \t:=-(z\nabla)^{-1}: V\to V \text{  and  }  \Phi:=z:V\to V
  \end{equation}
Then $\t$ extends to an action of $\Bbbk((\t))$ on $V$, dim$_{\tK}V<\infty$, and $\Phi$ is an invertible difference operator.  We write $V=V_{\t}$ to denote that we are considering $V$ as a $\tK$-vector space.  We define \emph{the local Mellin transform from zero to infinity of E} to be the object
$$\M^{(0,\infty)}(E):=(V_{\t},\Phi)\in \N.$$
\end{definition}

\begin{definition}\label{def mellin xi}
  Let $E=(V,\nabla)\in\C_x$ such that $\nabla$ has no horizontal sections.  Consider on $V$ the $\Bbbk$-linear operators
\begin{equation}\label{eq mellin xi}
  \t:=-(z\nabla)^{-1}: V\to V \text{  and  }  \Phi:=z:V\to V
  \end{equation}
Then $\t$ extends to an action of $\tK$ on $V$, dim$_{\tK}V<\infty$, and $\Phi$ is an invertible difference operator.  We define \emph{the local Mellin transform from $x$ to infinity of E} to be the object
$$\M^{(x,\infty)}(E):=(V_{\t},\Phi)\in \N.$$
\end{definition}

\begin{remark}
Since $E\in \C_x$, we are thinking of $K$ as $\Bbbk((z_x))$.  This emphasizes that we are localizing at a point $x\neq 0$ with local coordinate $z_x=z-x$.
\end{remark}

Note that in the following definition we are thinking of $K$ as $\Bbbk((\zeta))$, since we are localizing at the point at infinity $\zeta=\frac{1}{z}$.

\begin{definition}\label{def mellin ii}
  Let $E=(V,\nabla)\in\C_{\infty}^{>0}$, thus all irreducible components of $\nabla$ have slope greater than zero.  Consider on $V$ the $\Bbbk$-linear operators
\begin{equation*}
  \t:=-(z\nabla)^{-1}: V\to V \text{  and  }  \Phi:=z:V\to V
  \end{equation*}
Then $\t$ extends to an action of $\tK$ on $V$, dim$_{\tK}V<\infty$, and $\Phi$ is an invertible difference operator.  We define \emph{the local Mellin transform from infinity to infinity of E} to be the object
$$\M^{(\infty,\infty)}(E):=(V_{\t},\Phi)\in \N.$$
\end{definition}

\begin{claim}\label{zi def claim}
$\M^{(0,\infty)}$ is well-defined.
\end{claim}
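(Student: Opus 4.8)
The plan is to unpack the three assertions embedded in Definition \ref{def mellin zi}: that $\t=-(z\nabla)^{-1}$ makes sense and endows $V$ with the structure of a finite-dimensional $\tK$-vector space compatible with the given topology, and that $\Phi=z$ is then an invertible difference operator. It suffices to treat an indecomposable component $E_f\otimes J_m$ and assemble the conclusions over the direct sum.

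First I would check that $z\nabla$ is invertible, so that $\t$ is even defined. Since $E\in\C_0^{>0}$, every indecomposable component has $\mathrm{ord}(f)<0$; hence $\nabla$ has no horizontal sections, the leading term of $z\nabla$ is multiplication by the invertible element $f$, and $z\nabla$ is invertible. Corollary \ref{ordercorollaries}\eqref{ordercorollaries1} then gives $\O\big((z\nabla)^{-1}\big)=-\mathrm{ord}(f)>0$, so $\O(\t)>0$ on each component and therefore on all of $V$.

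The core of the argument is to produce the $\tK$-structure by applying Proposition \ref{Tateprop} to the operator $Z=\t$. As a finite-dimensional $K$-vector space, $V$ is of Tate type by Lemma \ref{tate vs fin dim lemma} and hence a Tate space by Lemma \ref{Tate vectspace lemma}. I would then verify the two hypotheses of Proposition \ref{Tateprop}: the contraction property follows at once from $\O(\t)>0$ via Lemma \ref{cont iff slope lemma}; continuity, openness, and compactness follow because $\t$ is invertible with inverse $-(z\nabla)$, both maps having finite norm (finite order), so that $\t$ is a homeomorphism carrying open linearly compact subspaces to open linearly compact subspaces. Proposition \ref{Tateprop} then yields a unique Tate-type $\Bbbk[[\t]]$-module structure on $V$ for which $\t$ acts as $-(z\nabla)^{-1}$ and the $\t$-adic topology coincides with the original one. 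Since $\t$ acts invertibly, scalars extend to $\tK$, and Tate type together with invertibility of $\t$ forces $\dim_{\tK}V<\infty$ by the converse direction of Lemma \ref{tate vs fin dim lemma}.

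It remains to show $\Phi=z$ is an invertible difference operator on $V_{\t}$. Invertibility is immediate, as $z\in K^*$ acts invertibly on $V$. For the twisted linearity, the key computation is the commutator identity: the Leibniz rule gives $[\nabla,z]=1$, whence $[z\nabla,z]=z$, equivalently $z(z\nabla)z^{-1}=z\nabla-1$. Substituting $z\nabla=-\t^{-1}$ and solving yields $\Phi\,\t\,\Phi^{-1}=\t/(1+\t)=\varphi(\t)$, precisely the automorphism of Definition \ref{diffop}. Since conjugation by $\Phi$ is a continuous $\Bbbk$-algebra automorphism of the $\tK$-action that agrees with the continuous map $\varphi$ on the topological generator $\t$, the two coincide on all of $\tK$, giving $\Phi f\Phi^{-1}=\varphi(f)$ and hence $\Phi(fv)=\varphi(f)\Phi(v)$ for every $f\in\tK$. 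Thus $(V_{\t},\Phi)\in\N$. The main obstacle I anticipate is the bookkeeping in the middle step—confirming that $\t$ satisfies exactly the compactness and openness conditions of Proposition \ref{Tateprop} and that the resulting $\tK$-structure is compatible with the pre-existing topology—together with the final passage from $\Phi\,\t\,\Phi^{-1}=\varphi(\t)$ on the generator to the full relation on $\Bbbk((\t))$, which rests on continuity of conjugation by $\Phi$ rather than on purely algebraic manipulation.
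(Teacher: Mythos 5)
Your proposal is correct and follows essentially the same route as the paper: you reduce to indecomposable components, apply Proposition \ref{Tateprop} to $\t=-(z\nabla)^{-1}$ (contraction from Corollary \ref{ordercorollaries} via Lemma \ref{cont iff slope lemma}, finite-dimensionality of $V_{\t}$ from Lemma \ref{tate vs fin dim lemma}), and establish the difference-operator identity from $[\nabla,z]=1$ together with density of Laurent polynomials, exactly as in the paper's proof of Claim \ref{zi def claim} and Lemma \ref{ext subclaim}. The only (valid) cosmetic difference is in condition (1) of Proposition \ref{Tateprop}: where the paper computes explicitly on lattices in canonical form, $z\nabla(L_k)=L_{k-m}$ and $(z\nabla)^{-1}(L_k)=L_{k+m}$, you argue abstractly that $\t$ and its inverse have finite norm, so $\t$ is a linear homeomorphism and hence carries open linearly compact subspaces to open linearly compact subspaces.
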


\begin{proof}
To prove the claim we must show the following:
\begin{enumerate}[(i)]
\item \label{claimproof1} $\t$ extends to an action of $\Bbbk((\t))$ on $V$. 
\item \label{claimproof2} $V_{\t}$ is finite dimensional.
\item \label{claimproof3} $\Phi$ is an invertible difference operator on $V_{\t}$.
\end{enumerate}

 We prove \eqref{claimproof1} with Lemma \ref{ext subclaim} below. In the proof of Lemma \ref{ext subclaim} we show that $(z\nabla)^{-1}$ satisfies the conditions of Proposition \ref{Tateprop}, and it follows that $V_{\t}$ is of Tate type.  Lemma \ref{tate vs fin dim lemma} then implies that $V_{\t}$ is finite-dimensional, proving \eqref{claimproof2}.  To prove \eqref{claimproof3}, we first note that $\Phi$ is invertible by construction.  To see that $\Phi$ is a difference operator, we need to show that $\Phi(fv)=\varphi(f)\Phi(v)$ for all $f\in K$ and $v\in V$.  Since $\Phi$ is $\Bbbk$-linear and Laurent polynomials are dense in Laurent series, this reduces to showing that $\Phi(\t^i)=\varphi(\t^i)\Phi$, which can be proved by induction so long as you can show that $\Phi(\t)=\left(\frac{\t}{1+\t}\right)\Phi$.  This last equation is equivalent to $(\eta+1)\Phi=\Phi \eta$, which we now prove. Using the fact that $[\nabla,z]=1$ and the definitions given in \eqref{eq mellin zi} we compute
\[ (\eta+1)\Phi=-z\nabla z+z=-z(z\nabla+1)+z=z(-z\nabla)=\Phi\eta.\]
\end{proof}

\begin{lemma}\label{ext subclaim}
The definition for $\t$, as given in \eqref{eq mellin zi}, extends to an action of $\Bbbk((\t))$ on $V$.
\end{lemma}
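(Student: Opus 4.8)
The plan is to produce the $\Bbbk((\t))$-action by applying Proposition \ref{Tateprop} to the operator $Z := \t = -(z\nabla)^{-1}$ and then inverting $\t$. First I would record the ambient structure: since $V$ is a finite-dimensional $K$-vector space it is of Tate type by Lemma \ref{tate vs fin dim lemma}, and hence a Tate vector space in the $z$-adic topology by Lemma \ref{Tate vectspace lemma}. It therefore remains only to verify that $Z$ satisfies the two hypotheses of Proposition \ref{Tateprop}, after which the proposition supplies the module structure.

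For the contracting hypothesis, I would decompose $V=\bigoplus_i(E_{f_i}\otimes J_{m_i})$ into indecomposable components and apply Corollary \ref{ordercorollaries}(\ref{ordercorollaries1}), which gives $\text{Ord}((z\nabla)^{-1})=-\text{ord}(f_i)$ on each component. Since $E\in\C_0^{>0}$ forces $\text{ord}(f_i)<0$ on every component, and multiplication by the scalar $-1$ does not affect order, we get $\text{Ord}(Z)=\max_i(-\text{ord}(f_i))>0$; Lemma \ref{cont iff slope lemma} then shows $Z$ is contracting. For the continuity/openness/compactness hypothesis, I would use that each component has no horizontal sections, so $z\nabla$ is invertible and $Z$ is invertible with $Z^{-1}=-z\nabla$. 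Both $Z$ and $Z^{-1}$ have finite order (namely $-\text{ord}(f_i)$ and $\text{ord}(f_i)$ by Corollary \ref{ordercorollaries}(\ref{ordercorollaries1})), hence finite norm, hence are continuous. Thus $Z$ is a linear homeomorphism of $V$, so it and $Z^{-1}$ carry open linearly compact subspaces to open linearly compact subspaces, which is precisely condition (1) of Proposition \ref{Tateprop}.

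Having checked both hypotheses, Proposition \ref{Tateprop} yields a unique Tate-type $\Bbbk[[\t]]$-module structure on $V$ in which $\t$ acts as $Z$ and whose topology is the $\t$-adic topology. The final step, which I regard as the only genuinely delicate point, is to pass from this power-series action of $\Bbbk[[\t]]$ to the desired Laurent-series action of $\Bbbk((\t))$. Because $Z$ is invertible, $\t$ acts invertibly on $V$, so inverting $\t$ (localizing the $\Bbbk[[\t]]$-module at $\t$) leaves the underlying space unchanged and endows $V$ with a $\Bbbk((\t))$-module structure. Concretely, a Laurent series $\sum_{i\geq -N}c_i\t^i$ acts as the operator $\sum_{i\geq -N}c_i Z^i$, where $Z^{-1}=-z\nabla$, and this series converges on the finite-dimensional space precisely because $Z$ is contracting. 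This gives the asserted extension of $\t$ to an action of $\Bbbk((\t))$ on $V$.
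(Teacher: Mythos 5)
Your proof is correct, and its skeleton matches the paper's: show $z\nabla$ is invertible so that negative powers of $\t$ act via $\Bbbk[\t^{-1}]=\Bbbk[-z\nabla]$, apply Proposition \ref{Tateprop} to $\t$ to obtain the $\Bbbk[[\t]]$-action, and splice the two into a $\Bbbk((\t))$-action; your contraction check via Corollary \ref{ordercorollaries}, \eqref{ordercorollaries1} and Lemma \ref{cont iff slope lemma} is exactly the paper's. Where you genuinely diverge is in verifying condition (1) of Proposition \ref{Tateprop}. The paper reduces to an indecomposable in canonical form and computes explicitly that $z\nabla$ and $(z\nabla)^{-1}$ shift lattices, $z\nabla(L_k)=L_{k-m}$ and $(z\nabla)^{-1}(L_k)=L_{k+m}$, which exhibits openness, continuity and linear compactness in one stroke. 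You instead argue softly: $Z$ and $Z^{-1}$ are both continuous, hence $Z$ is a linear homeomorphism and carries open linearly compact subspaces to open linearly compact subspaces. That argument is valid and has the virtue of avoiding a choice of basis, but be careful how you source continuity: inferring ``finite norm'' from ``finite order'' is mildly circular in this paper's setup, since $\textup{Ord}$ is defined through the spectral radius, whose defining limit is only guaranteed to exist for continuous operators. The clean citation is Proposition \ref{normprop}, \eqref{normprop3}: $||(z\nabla)^{-1}||_{inf}=\epsilon^{-\textup{ord}(f)}<\infty$ (and $z\nabla$ is a similitude of finite norm), so some norm bounds both operators, and by Lemma \ref{cassels lemma} all norms induce the same topology, giving continuity in the $z$-adic topology. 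Two smaller points: on a direct sum one has $\textup{Ord}(Z)=\min_i\bigl(-\textup{ord}(f_i)\bigr)$ rather than $\max_i$, since the norm of a direct sum is the maximum of the component norms and $\log_\epsilon$ is decreasing --- positivity, and hence your conclusion, is unaffected; and your opening step, invoking Lemmas \ref{tate vs fin dim lemma} and \ref{Tate vectspace lemma} to confirm that $V$ is a Tate space, makes explicit a hypothesis of Proposition \ref{Tateprop} that the paper leaves tacit, which is an improvement in completeness. What the paper's lattice computation buys in exchange for its explicitness is that continuity, openness and linear compactness come out simultaneously with no appeal to the norm formalism at all.
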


\begin{proof}
 Since all indecomposable components of $\nabla$ have positive slope, $\nabla$ (and $z\nabla$) will be invertible and thus $\t$ is well-defined.  An action of $\Bbbk[\t^{-1}]=\Bbbk[-z\nabla]$ on $V$ is trivially defined.  If $(z\nabla)^{-1}:V\to V$ satisfies the conditions of Proposition \ref{Tateprop} we will also have an action of $\Bbbk[[\t]]$ on $V$.  This will give a well defined action of $\tK$ on $V$.  Thus all we need to prove is that $(z\nabla)^{-1}:V\to V$ satisfies the conditions of Proposition \ref{Tateprop}.

We must show that $\t=(z\nabla)^{-1}:V \to V$ is continuous, open, linearly compact and contracting.  Due to the canonical form for difference operators, we can assume without loss of generality that $\nabla$ is indecomposable and $z\nabla$ is of the form
  $$ z\frac{d}{dz}+ \begin{bmatrix}
  f & &\\
  1 &\ddots &\\
   &\ddots &\ddots\\
  \end{bmatrix} $$
with $f\in \Bbbk[z^{-1/r}]$ and ord$(f)=-m/r<0$.  Let $\{e_i\}$ be the canonical basis.  Since lattices are linearly compact open subspaces, to prove that $(z\nabla)^{-1}$ is open, continuous and linearly compact it suffices to show that $(z\nabla)$ and $(z\nabla)^{-1}$ map a lattice of the form $L_k=\bigoplus (z^{1/r})^k\Bbbk[[z^{1/r}]]e_i$ to a lattice of the same form.

  We see that $z\nabla(L_k)=\bigoplus (z^{1/r})^{k-m}\Bbbk[[z^{1/r}]]e_i=L_{k-m}$ and $(z\nabla)^{-1}(L_k)=\bigoplus (z^{1/r})^{k+m}\Bbbk[[z^{1/r}]]e_i=L_{k+m}$, so $(z\nabla)^{-1}$ is open, continuous and linearly compact.

To show that $(z\nabla)^{-1}$ is contracting, by Lemma \ref{cont iff slope lemma} we only need to show $\O((z\nabla)^{-1})>0$. By Corollary \ref{ordercorollaries}, \eqref{ordercorollaries1}, then, it suffices to show that we have ord$(f)<0$ for the indecomposable $(V,\nabla)=E_f\otimes J_m$.  This condition is fulfilled by assumption, since all indecomposable components have slope greater than zero.
\end{proof}

The proof that $\M^{(x,\infty)}$ is well-defined is similar to the proof above, with only one major caveat.  In the proof of (i), we use the fact that the leading term of the operator is the only important term for the theoretical calculation.  Thus one can think of $z$ as $z_x+x$, and reduce to considering $z\nabla$ as merely $x\nabla$, from which the result readily follows.  The proofs of (ii) and (iii) are identical.  The proof that $\M^{(\infty,\infty)}$ is well-defined is virtually identical to the proof of Claim \ref{zi def claim} once the change of variable from $z$ to $\zeta$ is taken into consideration.

\begin{remark}
  Note that the local Mellin transforms above give functors to apply to all connections except for certain connections with regular singularity.  More precisely, the only invertible connections for which $\M^{(0,\infty)}$, $\M^{(x,\infty)}$, and $\M^{(\infty,\infty)}$ \emph{cannot} be applied are those connections in $\C_0$ and $\C_{\infty}$ with slope zero.  We conjecture that these connections with regular singularity will map to difference operators with singularity at a point $y\neq \infty$.  This regular singular case is sufficiently small, and the techniques necessary to prove our conjecture sufficiently different from the situation described above, that we do not discuss it here.
\end{remark}

\section{Definition of local inverse Mellin transforms}\label{ILMT defs sec}

\begin{definition}\label{def inv mellin zi}
  Let $D=(V,\Phi)\in\N^{>0}$. Thus $\Phi$ is invertible and the irreducible components of $\Phi$ have order greater than zero.  Consider on $V$ the $\Bbbk$-linear operators
\begin{equation}\label{eq mellin def inv zi}
  z:=\Phi: V\to V \text{  and  }  \nabla:=-(\t\Phi)^{-1}:V\to V
  \end{equation}
Then $z$ extends to an action of $\Bbbk((z))$ on $V$, dim$_{\Bbbk((z))}V<\infty$, and $\nabla$ is a connection.  We write $V_{z}$ for $V$ to denote that we are considering $V$ as a $\Bbbk((z))$-vector space.  We define \emph{the local inverse Mellin transform from zero to infinity of D} to be the object
$$\M^{-(0,\infty)}(D):=(V_{z},\nabla)\in \C_0.$$
\end{definition}

\begin{definition}\label{def inv mellin xi}
  Let $D=(V,\Phi)\in\N^{=0}$ such that all irreducible components of $\Phi$ have order zero with the same leading coefficient $x\neq 0$, and $\Phi-x$ is invertible.
 
  Consider on $V$ the $\Bbbk$-linear operators
\begin{equation*}\label{eq def inv mellin xi}
  z:=\Phi: V\to V \text{  and  }  \nabla:=-(\t\Phi)^{-1}:V\to V.
  \end{equation*}
Then the action of $z-x=z_x$ is clearly defined, $z_x$ extends to an action of $\Bbbk((z_x))$ on $V$, dim$_{\Bbbk((z_x))} V<\infty$, and $\nabla$ is a connection.  We write $V_{z_x}$ for $V$ to denote that we are considering $V$ as a $\Bbbk((z_x))$-vector space.  We define \emph{the local inverse Mellin transform from $x$ to infinity of D} to be the object
$$\M^{-(x,\infty)}(D):=(V_{z_x},\nabla)\in \C_x.$$
\end{definition}

\begin{definition}\label{def inv mellin ii}
  Let $D=(V,\Phi)\in\N^{<0}$.  Thus $\Phi$ is invertible and the irreducible components of $\Phi$ have order less than zero.  Consider on $V$ the $\Bbbk$-linear operators
\begin{equation*}
  z:=\Phi: V\to V \text{  and  }  \nabla:=-(\t\Phi)^{-1}:V\to V
  \end{equation*}
Then $\zeta=z^{-1}$ extends to an action of $\Bbbk((\zeta))$ on $V$ and dim$_{\Bbbk((\zeta))} V<\infty$.  We write $V_{\zeta}$ for $V$ to denote that we are considering $V$ as a $\Bbbk((\zeta))$-vector space.  We define \emph{the local inverse Mellin transform from infinity to infinity of $D$} to be the object
$$\M^{-(\infty,\infty)}(D):=(V_{\zeta},\nabla)\in \C_{\infty}.$$
\end{definition}

\begin{claim}\label{inv claim zi}
$\M^{-(0,\infty)}$ is well-defined.
\end{claim}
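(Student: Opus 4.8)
The plan is to mirror the structure of the proof of Claim~\ref{zi def claim}, since $\M^{-(0,\infty)}$ is the formal inverse construction and the three things to verify are dual to those in the forward direction. Concretely, I must show: (i) $z:=\Phi$ extends to an action of $\Bbbk((z))$ on $V$; (ii) $V_z$ is finite dimensional over $\Bbbk((z))$; and (iii) $\nabla:=-(\t\Phi)^{-1}$ is a connection on $V_z$, i.e.\ it satisfies the Leibniz identity. The roles of the operators are swapped relative to the forward transform: there $\t=-(z\nabla)^{-1}$ supplied the new uniformizer and $\Phi=z$ the new operator, whereas here $z=\Phi$ supplies the new uniformizer and $\nabla=-(\t\Phi)^{-1}$ the new operator.

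For (i) I would invoke Proposition~\ref{Tateprop} with the contracting operator $Z:=\Phi$, exactly as Lemma~\ref{ext subclaim} did for $(z\nabla)^{-1}$. By the canonical form of Theorem~\ref{diffop canon form} and Proposition~\ref{diffopprop}\eqref{difopprop3}, I may assume $(V,\Phi)=(D_g\otimes T_m)$ is indecomposable, with $\Phi=g\varphi$ acting on $K_q=\Bbbk((\t^{1/q}))$ where $\mathrm{ord}(g)>0$ by the hypothesis $D\in\N^{>0}$. I would check that $\Phi$ and $\Phi^{-1}$ send a lattice $L_k=\bigoplus(\t^{1/q})^k\Bbbk[[\t^{1/q}]]e_i$ to lattices of the same form (so $Z$ is continuous, open, and linearly compact), and then use Lemma~\ref{cont iff slope lemma} together with Corollary~\ref{ordercorollaries}\eqref{ordercorollaries2}, which gives $\mathrm{Ord}(\Phi)=\mathrm{ord}(g)>0$, to conclude that $\Phi$ is contracting. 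Proposition~\ref{Tateprop} then yields a Tate-type $\Bbbk[[z]]$-module structure with $z$ acting as $\Phi$, which extends to a $\Bbbk((z))$-action; statement (ii) then follows immediately from Lemma~\ref{tate vs fin dim lemma}, since a $\Bbbk((z))$-vector space is of Tate type precisely when it is finite dimensional.

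The substantive step is (iii), the verification of the Leibniz identity, and this is where I expect the main obstacle. Unlike the forward direction—where checking the difference relation $\Phi(fv)=\varphi(f)\Phi(v)$ reduced by density of Laurent polynomials and induction to the single commutation relation $(\eta+1)\Phi=\Phi\eta$—here I must produce the additive Leibniz rule $\nabla(fv)=f\nabla(v)+\tfrac{df}{dz}v$ out of the multiplicative data. The right strategy is again to reduce, by $\Bbbk$-linearity and density, to showing that $\nabla$ acts correctly on the powers $z^i=\Phi^i$, and ultimately to a single algebraic identity between the two operators. Recalling the global dictionary $-z\nabla\mapsto\eta$ and $z\mapsto\Phi$ with $[\Phi,\eta]=\Phi$, the relation I would establish is the commutator $[\nabla,z]=1$ in terms of $\nabla=-(\t\Phi)^{-1}$ and $z=\Phi$. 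The care needed is that $\t=-(z\nabla)^{-1}$ is only a formal reciprocal here, so I must manipulate $(\t\Phi)^{-1}$ and $\Phi$ symbolically: starting from the known relation $(\eta+1)\Phi=\Phi\eta$ established in Claim~\ref{zi def claim} and inverting it appropriately to recover $\nabla z-z\nabla=1$, then bootstrapping from $[\nabla,z]=1$ to the full Leibniz identity on all of $V_z$ by the density-and-induction argument. The delicate point is keeping track of the formal inverses and confirming that the commutation relation transported back through the inversion is exactly $[\nabla,z]=1$ rather than something twisted; once that identity is in hand, the Leibniz rule and hence "$\nabla$ is a connection" follows as in the forward claim.
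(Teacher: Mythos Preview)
Your proposal is correct and follows essentially the same approach as the paper: the paper likewise splits into (i)--(iii), proves (i) via Proposition~\ref{Tateprop} applied to $Z=\Phi$ on an indecomposable $D_g\otimes T_m$ (checking lattices go to lattices and using $\mathrm{Ord}(\Phi)=\mathrm{ord}(g)>0$ for contracting), deduces (ii) from Lemma~\ref{tate vs fin dim lemma}, and for (iii) reduces by density and induction to the single identity $[\nabla,z]=1$. One small clarification: the relation $(\eta+1)\Phi=\Phi\eta$ you invoke is not something you import from Claim~\ref{zi def claim} but is an immediate consequence of $\Phi$ being a difference operator here (since $\varphi(\t^{-1})=\t^{-1}+1$); from it the computation $[\nabla,z]=1$ is indeed the straightforward one the paper alludes to.
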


\begin{proof}
To prove the claim we must show the following:
\begin{enumerate}[(i)]
\item \label{claimproof1 inv zi} $z$ extends to an action of $\Bbbk((z))$ on $V$. 
\item \label{claimproof2 inv zi} $V_{z}$ is finite dimensional.
\item \label{claimproof3 inv zi} $\nabla$ is a connection on $V_{z}$.
\end{enumerate}

 We prove \eqref{claimproof1 inv zi} with Lemma \ref{ext subclaim inv zi} below. In the proof of Lemma \ref{ext subclaim inv zi} we show that $V_z$ is of Tate type.  Lemma \ref{tate vs fin dim lemma} then implies that $V_z$ is finite-dimensional, proving \eqref{claimproof2 inv zi}.  To prove \eqref{claimproof3 inv zi} we must show that $[\nabla,f]=f'$ for all $f\in\Bbbk((z))$.  Since $\nabla$ is $\Bbbk$-linear and Laurent polynomials are dense in Laurent series, to show that $[\nabla,f]=f'$ we merely need to show that $[\nabla, z^n]=nz^{n-1}$ for all $n\in\bigz$.  A straightforward calculation shows that $[\nabla,z]=1$, though, and then $[\nabla, z^n]=nz^{n-1}$ follows by induction.
 
\end{proof}

\begin{lemma}\label{ext subclaim inv zi}
 The definition of $z$, as given in \eqref{eq mellin def inv zi}, extends to an action of $\Bbbk((z))$ on $V$.
\end{lemma}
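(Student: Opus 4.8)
The plan is to mirror the structure of Lemma~\ref{ext subclaim} exactly, since the inverse statement is the formal dual of the forward one. I want to show that $z := \Phi$ extends to an action of $\Bbbk((z))$ on $V$, and by Proposition~\ref{Tateprop} it suffices to produce a Tate-type $\Bbbk[[z]]$-module structure in which $z$ acts as $\Phi$. So the entire task reduces to verifying that $\Phi : V \to V$ satisfies the two hypotheses of Proposition~\ref{Tateprop}: that $\Phi$ is continuous, open, and linearly compact, and that $\Phi$ is contracting. Given those, Proposition~\ref{Tateprop} hands us the $\Bbbk[[z]]$-module structure of Tate type directly, and the action of $\Bbbk[z^{-1}] = \Bbbk[\Phi^{-1}]$ is immediate from invertibility of $\Phi$ (which holds because $D \in \N^{>0}$ has $\Phi$ invertible by hypothesis); together these give the full action of $\Bbbk((z))$.

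First I would reduce to the indecomposable case using the canonical form of Proposition~\ref{diffopprop}, so I may assume $(V,\Phi) = D_g \otimes T_m$ with $\mathrm{ord}(g) > 0$. Next, to check the openness/continuity/linear-compactness condition, I would exhibit a lattice $L_k = \bigoplus (\t^{1/q})^k \Bbbk[[\t^{1/q}]] e_i$ (in the $\t$-adic sense on $V$) and compute directly that both $\Phi$ and $\Phi^{-1}$ carry $L_k$ to a lattice of the same form. Since $\Phi = g\varphi$ on each block up to the nilpotent $T_m$ part, and $\varphi$ is a $\Bbbk$-automorphism preserving order, the image $\Phi(L_k)$ shifts the lattice index by $\mathrm{ord}(g)$, exactly paralleling the $z\nabla(L_k) = L_{k-m}$ computation in Lemma~\ref{ext subclaim}. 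This shows $\Phi$ and $\Phi^{-1}$ preserve the class of lattices, hence $\Phi$ is open, continuous, and linearly compact.

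For the contracting condition, I would invoke Lemma~\ref{cont iff slope lemma}: it is enough to show $\O(\Phi) > 0$. By Corollary~\ref{ordercorollaries}\eqref{ordercorollaries2} we have $\O(\Phi) = \mathrm{ord}(g)$, and the hypothesis $D \in \N^{>0}$ says precisely that every irreducible component $D_g$ has $\mathrm{ord}(g) > 0$. Hence $\O(\Phi) > 0$ and $\Phi$ is contracting. With both conditions of Proposition~\ref{Tateprop} verified, the proposition supplies the unique Tate-type $\Bbbk[[z]]$-module structure on $V$ with $z$ acting as $\Phi$ and with $\t$-adic topology coinciding with the $z$-adic topology, completing the extension to a $\Bbbk((z))$-action.

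The step I expect to require the most care is the lattice computation for openness, specifically tracking how the nilpotent factor $T_m = (I_m + \t N_m)\varphi$ and the automorphism $\varphi$ interact when I apply $\Phi$ and $\Phi^{-1}$ to $L_k$; I must confirm the lattice index shift depends only on $\mathrm{ord}(g)$ and that the off-diagonal $\t N_m$ terms do not enlarge the image beyond a lattice of the same form. This is genuinely routine given that $\varphi$ preserves order and $\t N_m$ strictly raises order, so it should go through as smoothly as the corresponding verification for $z\nabla$ in Lemma~\ref{ext subclaim}, but it is the one place where a sign or index error could creep in.
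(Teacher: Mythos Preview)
Your proposal is correct and follows essentially the same route as the paper: reduce to an indecomposable $(V,\Phi)=D_g\otimes T_m$, verify the lattice-shifting computation $\Phi(L_k)=L_{k+s}$ and $\Phi^{-1}(L_k)=L_{k-s}$ to get openness, continuity, and linear compactness, then invoke Lemma~\ref{cont iff slope lemma} together with Corollary~\ref{ordercorollaries}\eqref{ordercorollaries2} and the hypothesis $\mathrm{ord}(g)>0$ to conclude that $\Phi$ is contracting. Your additional remarks about tracking the $\t N_m$ term and the automorphism $\varphi$ are more explicit than what the paper writes, but the underlying computation is identical.
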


\begin{proof}
  $\Phi$ is invertible, so an action of $\Bbbk[z^{-1}]$ is defined.  We prove that $\Phi$ satisfies the conditions of Proposition \ref{Tateprop} to show that an action of $\Bbbk[[z]]$ is well-defined.
  
  To apply Proposition \ref{Tateprop}, we need to show that $z=\Phi$ is continuous, open, linearly compact, and contracting. First we show that $\Phi$ is open, continuous and linearly compact. We can assume that $\Phi$ is indecomposable, so in canonical form $(V,\Phi)=D_g\otimes T_m$ for some $g\in K_r$ with ord$(g)=s/r$.

 Let $\{e_i\}$ be the canonical basis.  As in previous proofs, it suffices to show that $\Phi$ and $\Phi^{-1}$ map a lattice of the form $L_k=\bigoplus (\t^{1/r})^kAe_i$ to a lattice of the same form (note that here we are using $A=\Bbbk[[\t^{1/r}]]$).  Calculation using the canonical form shows that $\Phi(L_k)=L_{k+s}$ and $\Phi^{-1}(L_k)=L_{k-s}$, so $\Phi$ is open, continuous and linearly compact.

To show that an indecomposable $\Phi$ is contracting, by Lemma \ref{cont iff slope lemma} we need to show that $\O(\Phi)>0$. By Corollary \ref{ordercorollaries}, \ref{ordercorollaries2}, then, we simply need to show that for $(V,\Phi)=D_g\otimes T_m$ we have ord$(g)>0$.  This follows from the assumption that all irreducible components of $\Phi$ have order greater than zero.
 
\end{proof}

The proofs that $\M^{-(x,\infty)}$ and $\M^{-(\infty,\infty)}$ are well-defined are similar and are omitted.

\section{Equivalence of categories}\label{equiv cat sec}

Assuming that composition of the functors is defined, by inspection one can see that $\M^{(0,\infty)}$ and $\M^{-(0,\infty)}$ are inverse functors (and the same holds for the pairs $\M^{(x,\infty)}$ ,$\M^{-(x,\infty)}$ and $\M^{(\infty,\infty)}$, $\M^{-(\infty,\infty)}$).  

 Thus to show that the local Mellin transforms induce certain equivalences of categories, all we need is to confirm that the functors map into the appropriate subcategories.  We first prove an important property of normed vector spaces which coincides with properties of Tate vector spaces.  This will be useful in demonstrating the equivalence of categories.

\subsection{Normed vector spaces}

Our first goal is to prove the following lemma, which will greatly simplify the relationship between the norm of an operator and its local Mellin transform.  First we give some definitions related to infinite-dimensional vector spaces over $\Bbbk$.
\begin{definition}
Let $V$ be an infinite-dimensional vector space over $\Bbbk$. A \emph{norm} on $V$ is a real-valued function $||\bullet ||$ such that the following hold:
\begin{enumerate}
\item $||v||>0$ for $v\in V-\{0\}$, $||0||=0$.
\item $||v+w||\leq$max$(||v||,||w||)$ for all $v,w\in V$.
\item $||c\cdot v||=||v||$ for $c\in \Bbbk$ and $v\in V$.
\end{enumerate}
\end{definition}

Note that the above definition applies to an infinite-dimensional vector space over $\Bbbk$, as opposed to $K$.  Thus it is similar to Definition \ref{na norm def}, but not the same.

\begin{definition}
An infinite-dimensional vector space $V$ over $\Bbbk$ is \emph{locally linearly compact} if for any $r_1>r_2>0$, $r_i\in\mathbb{R}$, the ball of radius $r_2$ has finite codimension in the ball of radius $r_1$.
\end{definition}

\begin{proposition}\label{Mellin defs lemma}
Let $V$ be an infinite-dimensional vector space over $\Bbbk$, equipped with a norm $||\bullet||$ such that $V$ is complete in the induced topology.  Let $0<\epsilon<1$ and $Y:V\to V$ be an invertible $\Bbbk$-linear operator such that $||Y||=\epsilon^\alpha<1$ and $||Y^{-1}||=\epsilon^{-\alpha}$.  Define $\hat{\epsilon}:=\epsilon^\alpha$. Then
\begin{enumerate}
\item \label{Mellin defs lemma1} $V$ has a unique structure of a $K=\Bbbk((y))$-vector space such that $y$ acts as $Y$ and the norm $||\bullet||$ agrees with the valuation on $K$ where $|f|=\hat{\epsilon}^{\text{ord}(f)}$ for $f\in K$.
\item $V$ is finite-dimensional over $K$ if and only if $V$ is locally linearly compact.
\end{enumerate}
\end{proposition}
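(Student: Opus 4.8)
The plan is to first observe that the hypotheses force $Y$ to be a similitude, then to build the $\Bbbk((y))$-action by hand as a convergent series and deduce the norm compatibility from the similitude property; part (2) will then follow from a Nakayama-style argument relating $\dim_K V$ to the $\Bbbk$-codimension of $YB_1$ in the unit ball $B_1$. To begin, for any $v\in V$ we have $||Yv||\le||Y||\,||v||=\hat\epsilon||v||$, while $||v||=||Y^{-1}Yv||\le\hat\epsilon^{-1}||Yv||$ gives $||Yv||\ge\hat\epsilon||v||$; hence $||Yv||=\hat\epsilon||v||$ for all $v$ (the equality case of Proposition \ref{norms geq 1}, cf. Claim \ref{similitude claim}), and iterating, $||Y^nv||=\hat\epsilon^{\,n}||v||$ for all $n\in\bigz$.

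For part (1), I would define the action of a Laurent series $f=\sum_{i\ge k}c_iy^i$ (with $c_i\in\Bbbk$) on $v$ by $f\cdot v:=\sum_{i\ge k}c_iY^i(v)$. The finitely many negative-index terms act via the invertible $Y^{-1}$, and the tail converges because $||c_iY^i(v)||\le\hat\epsilon^{\,i}||v||\to0$, so the partial sums are Cauchy and converge by completeness of $V$. The ring and module axioms are then routine, the only delicate point being that the ultrametric inequality together with completeness legitimizes regrouping the convergent series. For the norm statement, if $\mathrm{ord}(f)=k$ then the leading term $c_kY^k(v)$ has norm exactly $\hat\epsilon^{\,k}||v||$ while every later term has strictly smaller norm, so by the ultrametric the maximum is attained uniquely and $||f\cdot v||=\hat\epsilon^{\,k}||v||=|f|\,||v||$. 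Uniqueness is immediate: requiring $y$ to act as $Y$ forces the action of $\Bbbk[y,y^{-1}]$, and continuity (equivalently the norm identity just proved) forces the action of the completion $\Bbbk((y))$. This is the normed-space analogue of Proposition \ref{Tateprop}.

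For part (2), the forward direction is the easy one. If $\dim_KV=n<\infty$, then by Lemma \ref{cassels lemma} the norm is equivalent to the max-norm on $K^n$, whose balls are $\Bbbk[[y]]$-lattices; given $r_1>r_2>0$ I would sandwich $L'\subseteq B(r_2)\subseteq B(r_1)\subseteq L''$ between two such lattices, so that $B(r_1)/B(r_2)$ is a subquotient of the finite-dimensional $\Bbbk$-space $L''/L'$ and hence finite-dimensional. Thus $V$ is locally linearly compact.

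The reverse direction is where the real work lies, and I expect it to be the main obstacle. Write $B_1=\{||v||\le1\}$; since $Y$ is a similitude with factor $\hat\epsilon$ we have $Y(B_1)=\{||v||\le\hat\epsilon\}$, and under the module structure of part (1) this is exactly $yB_1$. Local linear compactness (applied with $r_1=1$, $r_2=\hat\epsilon$) gives $d:=\dim_\Bbbk B_1/yB_1<\infty$. Lifting a $\Bbbk$-basis of $B_1/yB_1$ to $v_1,\dots,v_d\in B_1$, I would run a successive-approximation (Nakayama-type) argument: any $w\in B_1$ equals $\sum_ic_i^{(0)}v_i$ modulo $yB_1$, the remainder lies in $yB_1$, and iterating produces $f_i=\sum_{j\ge0}c_i^{(j)}y^j\in\Bbbk[[y]]$ with $w=\sum_if_iv_i$, the series converging by completeness. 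Since $V=\bigcup_kY^{-k}B_1$, the $v_i$ span $V$ over $K$; and reducing any relation $\sum_if_iv_i=0$ (normalized so the $f_i\in\Bbbk[[y]]$ are not all in $y\Bbbk[[y]]$) modulo $yB_1$ contradicts the independence of the images $\bar v_i$. Hence $\{v_i\}$ is a $K$-basis and $\dim_KV=d<\infty$. The points that require care are that $yB_1=Y(B_1)$ really coincides with the radius-$\hat\epsilon$ ball (exactly the similitude property) and that the iterated approximation converges inside the complete space $V$; these are what make the codimension $d$ compute the $K$-dimension on the nose.
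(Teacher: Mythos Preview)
The paper states this proposition without proof; it is followed immediately by a remark and by Corollary \ref{operator order relation}, with no argument supplied. So there is nothing to compare against, and your write-up fills the gap.

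Your argument is correct and is the natural one. The norm hypotheses $||Y||=\hat\epsilon$ and $||Y^{-1}||=\hat\epsilon^{-1}$ do force $Y$ to be a similitude (this is exactly the equality case of Proposition \ref{norms geq 1} and Claim \ref{similitude claim}), and from there the convergent-series construction of the $\Bbbk((y))$-action together with the ultrametric estimate $||f\cdot v||=|f|\,||v||$ is routine; your remark that this is the normed analogue of Proposition \ref{Tateprop} is apt and matches the paper's own remark after the statement. For part (2), the forward direction via Lemma \ref{cassels lemma} and the reverse direction via the Nakayama-type successive approximation on $B_1/yB_1$ are both standard and go through as you describe. The two points you flagged as requiring care---that $Y(B_1)$ really is the closed ball of radius $\hat\epsilon$, and that $V=\bigcup_k Y^{-k}B_1$---are immediate consequences of the similitude identity $||Y^n v||=\hat\epsilon^{\,n}||v||$, so there is no hidden obstacle.
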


\begin{remark}
  If $V$ is a Tate vector space then the unique structure of Proposition \ref{Mellin defs lemma}, \eqref{Mellin defs lemma1} coincides with that of Proposition \ref{Tateprop}.
\end{remark}

\begin{corollary}\label{operator order relation} Let $V$ be a $\Bbbk((y))$-vector space,
$Z:V\to V$ a similitude, and $||Z||=||Z||_{inf}=\epsilon^\alpha<1$.  Then $V$ can be considered as a $\Bbbk((Z))$-vector space (in the spirit of Proposition \ref{Mellin defs lemma}

) and for any similitude $A:V\to V$ we have $||A||=||A||_Z$.  In particular, $A$ will be a similitude when $V$ is viewed as either a $\Bbbk((y))$- or $\Bbbk((Z))$-vector space.
\end{corollary}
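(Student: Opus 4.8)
The plan is to prove Corollary~\ref{operator order relation} as a direct consequence of Proposition~\ref{Mellin defs lemma}. The key observation is that the hypotheses on $Z$ are exactly what is needed to invoke part~\eqref{Mellin defs lemma1} of that proposition with $Y:=Z$. Indeed, since $Z$ is a similitude with $\|Z\|=\|Z\|_{inf}=\epsilon^\alpha<1$, Claim~\ref{similitude claim} gives $\|Z^{-1}\|=\epsilon^{-\alpha}$, so both norm conditions of Proposition~\ref{Mellin defs lemma} are satisfied (taking the scalar $\lambda=\epsilon^\alpha$). First I would record that $V$, being a finite-dimensional $\Bbbk((y))$-vector space, is in particular a $\Bbbk$-vector space complete in the topology induced by $\|\bullet\|$, and that this $\Bbbk$-norm satisfies the three axioms of the infinite-dimensional norm definition; the scalar-invariance axiom $\|cv\|=\|v\|$ for $c\in\Bbbk$ holds because $|c|=\epsilon^{\mathrm{ord}(c)}=\epsilon^0=1$ for nonzero constants. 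Thus Proposition~\ref{Mellin defs lemma}\eqref{Mellin defs lemma1} applies and endows $V$ with a unique $\Bbbk((Z))$-vector space structure in which $Z$ acts as the uniformizer and the norm agrees with the valuation on $\Bbbk((Z))$ determined by $\hat\epsilon=\epsilon^\alpha$.

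The remaining content is the equality of norms. Here the point is that the \emph{function} $\|\bullet\|$ on the underlying set $V$ is literally unchanged: Proposition~\ref{Mellin defs lemma} produces a new field action on the same topological $\Bbbk$-vector space, with the same norm $\|\bullet\|$. Therefore, for any $\Bbbk$-linear operator $A:V\to V$, the operator norm computed via Definition~\ref{op norm def}, $\|A\|=\sup_{v\neq 0}\|A(v)\|/\|v\|$, uses only $\|\bullet\|$ and the $\Bbbk$-linear structure, neither of which changes when we pass from the $\Bbbk((y))$-structure to the $\Bbbk((Z))$-structure. Hence $\|A\|=\|A\|_Z$ as real numbers, which is the asserted identity. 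I would state this explicitly: the subscript $Z$ merely indicates which field structure we regard $V$ as carrying, but since the operator norm depends only on the common norm $\|\bullet\|$, the two values coincide.

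Finally, for the similitude assertion, I would argue that the property of being a similitude is again intrinsic to $\|\bullet\|$ and the $\Bbbk$-linear structure. If $A$ is a similitude for the $\Bbbk((y))$-structure, say $\|Av\|=\mu\|v\|$ for all $v$, then the very same multiplicative relation holds pointwise on $V$ regardless of which field we view $V$ over, so $A$ remains a similitude for the $\Bbbk((Z))$-structure with the same ratio $\mu=\|A\|=\|A\|_Z$. I would note that $K$-linearity of $A$ over the new field $\Bbbk((Z))$ is automatic here because in our applications $A$ is built from $Z$ itself (e.g.\ $A=\t\Phi$ or a power of $Z$), but even without that one only needs $\Bbbk$-linearity to speak of the similitude ratio.

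The main obstacle, such as it is, lies not in any calculation but in being careful about what ``$\|A\|_Z$'' means and in verifying the hypotheses of Proposition~\ref{Mellin defs lemma} cleanly---in particular confirming that the $\Bbbk$-valued norm inherited from the finite-dimensional $K$-norm genuinely satisfies the scalar-invariance axiom $\|c v\|=\|v\|$ for $c\in\Bbbk^{*}$, and that completeness transfers. Once those checks are in place, the corollary is essentially a restatement of the uniqueness and norm-compatibility already supplied by the proposition, so the proof is short: invoke Proposition~\ref{Mellin defs lemma} to get the $\Bbbk((Z))$-structure, then observe that the operator norm and the similitude property are computed from data ($\|\bullet\|$ and $\Bbbk$-linearity) that is shared by both field structures.
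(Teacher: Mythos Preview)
Your proposal is correct and matches the paper's intended approach: the paper states this result as a corollary to Proposition~\ref{Mellin defs lemma} without giving a separate proof, and your argument supplies precisely the straightforward derivation the paper leaves implicit---invoke the proposition to obtain the $\Bbbk((Z))$-structure, then observe that the operator norm and the similitude condition depend only on the underlying $\Bbbk$-norm $\|\bullet\|$, which is unchanged.
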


\subsection{Lemmas}

\begin{lemma}\label{indecomp map}
  The local Mellin transforms map indecomposable objects to indecomposable objects.
\end{lemma}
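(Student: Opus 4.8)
The plan is to treat all three local Mellin transforms uniformly and to deduce the statement from the fact, recorded at the start of Section~\ref{equiv cat sec}, that each $\M$ comes with a two-sided inverse functor $\M^{-}$. Concretely, I would show that each local Mellin transform is an \emph{additive} functor that is moreover fully faithful, so that it is an equivalence of categories onto its essential image; since an additive equivalence preserves the property of being a direct sum of two nonzero objects, it must send indecomposables to indecomposables. Because the three transforms differ only by a change of local coordinate ($z$, $z_x$, or $\zeta$), it suffices to carry out the argument for $\M^{(0,\infty)}$ and remark that the other two cases are identical.

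First I would verify additivity by inspection of Definition~\ref{def mellin zi}. On objects, for $(V_1,\nabla_1),(V_2,\nabla_2)\in\C_0^{>0}$ the operator $z\nabla$ on the direct sum is $(z\nabla_1)\oplus(z\nabla_2)$, so $\t=-(z\nabla)^{-1}$ and $\Phi=z$ both respect the decomposition $V_1\oplus V_2$; hence $\M^{(0,\infty)}(E_1\oplus E_2)=\M^{(0,\infty)}(E_1)\oplus\M^{(0,\infty)}(E_2)$. On morphisms, a horizontal map $\phi$ commutes with $\nabla$ and is $K$-linear, so it commutes with $z\nabla$ and hence with $\t$, while $\phi\Phi=\phi z=z\phi=\Phi\phi$; thus the same underlying $\Bbbk$-linear map is a morphism in $\N$, and $\M^{(0,\infty)}$ acts as the identity on underlying maps. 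In particular $\M^{(0,\infty)}$ is additive and functorial, and since $\M^{-(0,\infty)}$ is its inverse on both objects and morphisms, it is fully faithful.

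The categorical conclusion is then immediate. Let $E$ be indecomposable and suppose $\M^{(0,\infty)}(E)\cong D_1\oplus D_2$ in $\N$. Applying the inverse functor and using additivity gives $E\cong\M^{-(0,\infty)}(D_1)\oplus\M^{-(0,\infty)}(D_2)$; since $E$ is indecomposable one summand vanishes, say $\M^{-(0,\infty)}(D_1)=0$, and then $D_1\cong\M^{(0,\infty)}\M^{-(0,\infty)}(D_1)=\M^{(0,\infty)}(0)=0$. As $\M^{-(0,\infty)}\M^{(0,\infty)}(E)\cong E\neq0$ forces $\M^{(0,\infty)}(E)\neq0$, we conclude that $\M^{(0,\infty)}(E)$ is indecomposable, i.e.\ of the form $D_g\otimes T_{m'}$ by Proposition~\ref{diffopprop}.

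I expect the only real subtlety --- and hence the main obstacle --- to be the bookkeeping that makes ``additive equivalence'' rigorous here: confirming that $\M$ genuinely lands in $\N$ (rather than merely defining data on $V$) and that the inverse identities $\M^{-}\circ\M=\mathrm{id}$ and $\M\circ\M^{-}=\mathrm{id}$ hold as functors, not just pointwise, including their effect on morphisms. All of this is formal, given the operator identities already used to define the transforms. The tempting alternative --- computing the image $E_f\otimes J_m\mapsto D_g\otimes T_{m'}$ explicitly and reading off indecomposability from the canonical form of Theorem~\ref{diffop canon form} --- is strictly harder, as it requires tracking the nilpotent Jordan structure through the transform; this computation is exactly what the explicit calculation theorems (e.g.\ Theorem~\ref{thmMzi}) carry out, so the categorical route is preferable for the present purely qualitative statement.
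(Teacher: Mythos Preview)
Your argument has a circularity problem relative to the paper's logical order. You invoke the fact that $\M^{(0,\infty)}$ and $\M^{-(0,\infty)}$ are mutually inverse functors, but the sentence at the start of Section~\ref{equiv cat sec} is explicitly conditional: ``\emph{Assuming that composition of the functors is defined}\ldots''. To justify the composition one must know that $\M^{(0,\infty)}(E)\in\N^{>0}$ so that $\M^{-(0,\infty)}$ applies; this is exactly Lemma~\ref{Mzi target lemma}, whose proof cites Lemma~\ref{indecomp map}. Concretely, in your step ``apply $\M^{-}$ to $D_1\oplus D_2$'', you need each summand $D_i$ to lie in $\N^{>0}$, and nothing prior to Lemma~\ref{indecomp map} in the paper gives you that. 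So the appeal to an already-established equivalence is not available at this point.

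The paper's proof bypasses the issue by working directly with subspaces rather than invoking $\M^{-}$ as a functor. If $(V_\t,\Phi)$ decomposes, take a $\Phi$-stable proper $K_\t$-subspace $W$; then $W$ is automatically $\t$-stable, and since $\Phi=z$ and $\t=-(z\nabla)^{-1}$, it follows that $W$ is $z$-stable and $\nabla$-stable, hence a subobject of $(V,\nabla)$ in $\C_0$. Applied to both summands, this shows a decomposition of the image yields one of the source. This is really the concrete content of ``apply $\M^{-}$ to each summand'', carried out at the level of invariant subspaces so that no hypothesis like $D_i\in\N^{>0}$ is needed. Your additivity verification is fine and is morally the same observation in the forward direction; the fix is to replace the categorical inversion step with this direct invariance check.
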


\begin{proof}
  We give the proof for $\M^{(0,\infty)}$, the proofs for the others are identical.  Suppose that $\M^{(0,\infty)}(V,\nabla)=(V_{\t},\Phi)$ and $V_\t$ has a proper subspace $W$ such that $\Phi(W)\subset W$.  Since $V_{\t}$ is a $\Bbbk((\t))$-vector space we also trivially have that $\t(W)\subset W$.  By definition of $\M^{(0,\infty)}$, this means that $z(W)\subset W$ and $-(z\nabla)^{-1}(W)\subset W$.  In particular, it follows that $\nabla (W)\subset W$, so $W$ is a proper subspace of $V$ which is $\nabla$-invariant.  This implies that if the local Mellin transform of an object is decomposable, the original object is decomposable as well, and the result follows.
\end{proof}

\begin{lemma}\label{Mzi target lemma} Let $E=(V, \nabla)\in\C_0^{>0}$, $\t$, and $\Phi$ be as in Definition \ref{def mellin zi}. Then $\M^{(0,\infty)}(E)\in \N^{>0}$.
\end{lemma}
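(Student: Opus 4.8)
\emph{The plan is to} reduce to the indecomposable case and then read off the sign of $\textup{ord}(g)$ from the elementary fact that $\Phi=z$ is contracting. Because the transform is built from the operators $\t$ and $\Phi=z$, which respect direct sums, and because, by Lemma \ref{indecomp map}, it sends indecomposables to indecomposables, membership in $\N^{>0}$ may be checked one indecomposable summand at a time. So I may assume $E=E_f\otimes J_m$ with $\textup{ord}(f)<0$ and write $\M^{(0,\infty)}(E)=(V_\t,\Phi)=D_g\otimes U_m$; the goal becomes showing $\textup{ord}(g)>0$.

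\emph{Next I would} translate this into a statement about the order of the operator $\Phi$. By Corollary \ref{ordercorollaries}, \eqref{ordercorollaries2}, we have $\textup{Ord}(\Phi)=\textup{ord}(g)$, where the order is computed in the $\Bbbk((\t))$-structure on $V_\t$. Since an indecomposable invertible difference operator is a similitude, $\Phi$ is a similitude, and for a similitude the spectral radius equals the similitude factor; hence $\textup{Ord}(\Phi)>0$ is equivalent to $\Phi$ being contracting, i.e.\ to $||\Phi||_{inf}<1$. Thus it suffices to show that $\Phi$ is contracting.

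\emph{The key observation is that} $\Phi=z$ is multiplication by the uniformizer of the original field $K=\Bbbk((z))$. By the norm axiom (Definition \ref{na norm def}, property (3)), any $K$-norm on $V$ satisfies $||zv||=|z|\,||v||=\epsilon||v||$, so $z$ is a similitude of factor $\epsilon<1$ and is therefore contracting for the $z$-adic topology on $V$. Because the $\Bbbk((\t))$-structure of Definition \ref{def mellin zi} is built so that the $\t$-adic topology coincides with the $z$-adic topology (Proposition \ref{Mellin defs lemma} together with the construction in Lemma \ref{ext subclaim}), contractibility is a topological property unaffected by the reinterpretation of $V$ as a $\Bbbk((\t))$-vector space. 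Hence $\Phi$ remains contracting in $V_\t$, giving $\textup{Ord}(\Phi)>0$ and so $\textup{ord}(g)>0$, whence $\M^{(0,\infty)}(E)\in\N^{>0}$.

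\emph{The main obstacle is} the bookkeeping around the two competing normalizations of the valuation on $\Bbbk((\t))$: the norm that $V$ inherits from the connection makes the uniformizer $\t=-(z\nabla)^{-1}$ scale by $\epsilon^{-\textup{ord}(f)}$ rather than by $\epsilon$ (Corollary \ref{ordercorollaries}, \eqref{ordercorollaries1}), whereas $\textup{ord}(g)$ is defined using the standard valuation $|\t|=\epsilon$. The argument above sidesteps the exact rescaling by using only the normalization-invariant fact that $\Phi$ is contracting. If instead one wants the precise value, Corollary \ref{operator order relation} shows that $||\Phi||$ is unchanged as a real number under the change of uniformizer, and matching the similitude factor $\epsilon$ against the normalization $|\t|=\epsilon^{-\textup{ord}(f)}$ via Proposition \ref{diffop norm prop} yields $\textup{ord}(g)=1/(-\textup{ord}(f))$, which is positive and consistent with the value $\textup{ord}(g)=r/s$ implicit in the main theorem.
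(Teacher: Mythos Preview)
Your proof is correct and follows essentially the same approach as the paper's: reduce to an indecomposable summand via Lemma~\ref{indecomp map}, observe that $\Phi=z$ is a similitude of factor $\epsilon<1$ in the $z$-norm, and transfer this to the $\t$-picture. The paper does the transfer in one line by invoking Corollary~\ref{operator order relation} directly (so $||\Phi||_\t=||z||_z=\epsilon<1$), whereas you route through the equivalent statement that contractibility is a topological invariant of the common $z$-adic/$\t$-adic topology; both arrive at the same place. One small cosmetic point: the indecomposable difference object should be written $D_g\otimes T_{m'}$ (for some $m'$ you do not yet know equals $m$), not $D_g\otimes U_m$; this does not affect the argument.
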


\begin{proof}
 Due to the canonical decomposition it suffices to prove the lemma when $E$ is indecomposable.  Then $\nabla$ and $z$ are similitudes, so by Corollary \ref{operator order relation}, $\t$ and $\Phi$ are also similitudes.  By  Lemma \ref{indecomp map}, $\Phi$ is indecomposable, so to prove Lemma \ref{Mzi target lemma} it suffices to show that $||\Phi||_\t<1$.

  By Corollary \ref{operator order relation} we have that $||A||_z=||A||_\t$ for any similitude $A$, and it follows that
 \[ ||\Phi||_\t=||z||_z=(\epsilon)^{1}<1. \]
\end{proof}

The remaining lemmas have proofs similar to the proof of Lemma \ref{Mzi target lemma}, and are omitted.

\begin{lemma}\label{inv Mzi target lemma} Let $D=(V,\Phi)\in \N^{>0}$ be as in Definition \ref{def inv mellin zi}. Then $\M^{-(0,\infty)}(D)\in \C_0^{>0}$.
\end{lemma}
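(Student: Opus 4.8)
The plan is to reduce to the indecomposable case and then exploit the fact that $\M^{-(0,\infty)}$ is the inverse functor to $\M^{(0,\infty)}$, whose behavior on orders we already understand. Since $\M^{-(0,\infty)}$ respects the canonical decomposition (it is built from the same operator-theoretic recipe that makes $\M^{(0,\infty)}$ additive, and by the analogue of Lemma \ref{indecomp map} it sends indecomposable objects to indecomposable objects), it suffices to verify the claim when $D=(V,\Phi)=D_g\otimes T_m$ is indecomposable with $\mathrm{ord}(g)=s/r>0$, as guaranteed by the hypothesis $D\in\N^{>0}$. For such a $D$, the operator $\Phi$ is an invertible similitude, so by Corollary \ref{operator order relation} every similitude on $V$ has the same norm whether we view $V$ as a $\Bbbk((\t))$-space or, via $z:=\Phi$, as a $\Bbbk((z))$-space.

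First I would record what $\M^{-(0,\infty)}$ does to orders at the level of the defining operators. By Definition \ref{def inv mellin zi} we set $z:=\Phi$ and $\nabla:=-(\t\Phi)^{-1}$, so that $z\nabla=-\Phi(\t\Phi)^{-1}$, and hence $(z\nabla)^{-1}=-\t\Phi\,\Phi^{-1}=-\t$ up to the sign built into the construction; more precisely the relations in \eqref{eq mellin def inv zi} are exactly the inverse of \eqref{eq mellin zi}, so $-(z\nabla)^{-1}$ recovers $\t$ and $\Phi$ recovers $z$. The key numerical input is Corollary \ref{ordercorollaries}, \eqref{ordercorollaries2}: for the indecomposable $D$ we have $\mathrm{Ord}\big((\t\Phi)^{-1}\big)=-\mathrm{ord}(g)-1$. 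Since $\nabla=-(\t\Phi)^{-1}$, this gives $\mathrm{Ord}(\nabla)=-\mathrm{ord}(g)-1$, and therefore, by Corollary \ref{ordercorollaries}, \eqref{ordercorollaries1}, the output connection has $\mathrm{ord}(f)-1=\mathrm{Ord}(\nabla)=-\mathrm{ord}(g)-1$, i.e. $\mathrm{ord}(f)=-\mathrm{ord}(g)=-s/r$.

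From here the conclusion is immediate: the hypothesis $\mathrm{ord}(g)>0$ forces $\mathrm{ord}(f)<0$, which is precisely the condition that the indecomposable component $E_f\otimes J_m$ has slope greater than zero, so $\M^{-(0,\infty)}(D)\in\C_0^{>0}$. Alternatively, and perhaps more cleanly in the normed-space language used for Lemma \ref{Mzi target lemma}, I would note that $\nabla=-(\t\Phi)^{-1}$ is a similitude and compute its norm directly: by Corollary \ref{operator order relation} the norm of any similitude is coordinate-independent, and Proposition \ref{diffop norm prop}, \eqref{diffop normprop2} gives $||(\t\Phi)^{-1}||_{inf}=\epsilon^{-\mathrm{ord}(g)-1}$, whence $||\nabla||=\epsilon^{-\mathrm{ord}(g)-1}$; translating back through $||(z\nabla)^{-1}||$ shows the slope of the output is $\mathrm{ord}(g)>0$.

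I expect the only genuine obstacle to be bookkeeping rather than substance: one must check that the indecomposable reduction is legitimate for $\M^{-(0,\infty)}$ (i.e. the inverse transform also preserves the direct-sum decomposition and sends indecomposables to indecomposables), and one must be careful that the order computations in Corollary \ref{ordercorollaries} are applied in the correct coordinate — the connection lives in $\C_0$ with coordinate $z$, and the identification $z=\Phi$ is exactly what lets Corollary \ref{operator order relation} transport the similitude norms across. Once those two points are in place, the argument is just the chain $\mathrm{ord}(g)>0\Rightarrow\mathrm{ord}(f)<0\Rightarrow\M^{-(0,\infty)}(D)\in\C_0^{>0}$.
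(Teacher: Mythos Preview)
Your proposal is correct and follows essentially the same approach the paper intends: reduce to an indecomposable $D$, use that the relevant operators are similitudes so that Corollary~\ref{operator order relation} transports norms between the $\t$- and $z$-coordinates, and then read off the slope condition from the order computations in Corollary~\ref{ordercorollaries} (or equivalently Proposition~\ref{diffop norm prop}). The paper omits this proof as being parallel to Lemma~\ref{Mzi target lemma}; in that spirit the cleanest one-line version of your argument is $||(z\nabla)^{-1}||_z = ||{-\t}||_\t = \epsilon^1 < 1$, which is exactly the inverse of the computation given there.
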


\begin{lemma}\label{Mxi target lemma}
  Let $E=(V, \nabla)\in\C_x$ be as  in Definition \ref{def mellin xi}. Then $\M^{(x,\infty)}(E)\in \N^0$.
\end{lemma}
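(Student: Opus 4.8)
The plan is to follow the proof of Lemma~\ref{Mzi target lemma} essentially verbatim, the single new ingredient being that at a point $x\neq 0$ the coordinate function $z$ has order zero rather than order one. First I would invoke the canonical decomposition (Proposition~\ref{prop}) to reduce to the case where $E=(V,\nabla)=E_f\otimes J_m$ is indecomposable. Since $\M^{(x,\infty)}$ respects the direct sum decomposition and, by Lemma~\ref{indecomp map}, carries indecomposables to indecomposables, it suffices to check that the indecomposable difference operator $\Phi$ we produce has $\textup{ord}(g)=0$ in its canonical form $D_g\otimes T_m$.

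Next I would set up the similitude framework. Because $E$ is indecomposable with no horizontal sections, both $\nabla$ and multiplication by $z$ are similitudes with respect to the standard norm on $V$, and hence so is $\t=-(z\nabla)^{-1}$. By Corollary~\ref{ordercorollaries}, \eqref{ordercorollaries1.5} we have $\O(\t)=\O\big((z\nabla_{z_x})^{-1}\big)=1-\textup{ord}(f)$, and since connections satisfy $\textup{ord}(f)\le 0$ this gives $\O(\t)\ge 1>0$, so $\t$ is a contracting similitude with $||\t||=\epsilon^{1-\textup{ord}(f)}<1$. Corollary~\ref{operator order relation} then applies with $Z=\t$: it lets us regard $V$ as a $\Bbbk((\t))$-vector space and guarantees that for every similitude $A$ one has $||A||_{z_x}=||A||_{\t}$. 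In particular $\Phi=z$ remains a similitude after the change of structure.

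The crux is then a one-line order computation. Since $\O(\Phi)=\textup{ord}(g)$ by Corollary~\ref{ordercorollaries}, \eqref{ordercorollaries2}, it suffices to compute $||\Phi||_{\t}$. Using the identification above together with the key observation that in $\C_x$ we have $z=z_x+x$ with $x\in\Bbbk-\{0\}$, so that the leading term of $z$ is the constant $x$ and hence $\textup{ord}(z)=0$ in $\Bbbk((z_x))$, we obtain
\[ ||\Phi||_{\t}=||z||_{z_x}=\epsilon^{\textup{ord}(z)}=\epsilon^{0}=1. \]
Therefore $\O(\Phi)=0$, i.e.\ $\textup{ord}(g)=0$, and $\M^{(x,\infty)}(E)\in\N^{0}$ as claimed.

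The main obstacle here is conceptual rather than computational: everything hinges on recognizing that the leading term of the multiplication operator $z$ at a point $x\neq 0$ is the nonzero constant $x$ rather than the local coordinate $z_x$. This is precisely what separates the target category $\N^{0}$ in this lemma from the target $\N^{>0}$ of $\M^{(0,\infty)}$ in Lemma~\ref{Mzi target lemma}, where instead $\textup{ord}(z)=1$. I would state this point explicitly so that the reader sees why the various local Mellin transforms land in different subcategories; all remaining details are a direct transcription of the earlier argument.
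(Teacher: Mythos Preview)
Your proposal is correct and follows exactly the approach the paper intends: the paper omits the proof, stating that it is similar to that of Lemma~\ref{Mzi target lemma}, and your argument is precisely that adaptation, with the key observation that at $x\neq 0$ the operator $z=x+z_x$ is a similitude of norm $\epsilon^{0}=1$ rather than $\epsilon^{1}$. The reduction to indecomposables via Lemma~\ref{indecomp map}, the use of Corollary~\ref{operator order relation} to transfer norms, and the final order computation all match the template of the $\M^{(0,\infty)}$ case.
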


\begin{lemma}\label{Mii target lemma}
  Let $E=(V, \nabla)\in\C_{\infty}^{>0}$ be as in Definition \ref{def mellin ii}. Then $\M^{(\infty,\infty)}(E)\in \N^{<0}$.
\end{lemma}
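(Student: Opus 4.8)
The plan is to follow the proof of Lemma~\ref{Mzi target lemma} essentially verbatim, the one essential difference being that the ambient coordinate is now $\zeta = 1/z$ rather than $z$; this is precisely what reverses the sign of the relevant order and lands the image in $\N^{<0}$ instead of $\N^{>0}$.

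First, by the canonical decomposition (Proposition~\ref{prop}\eqref{prop3}) and the fact that $\M^{(\infty,\infty)}$ is built from the $\Bbbk$-linear operators $\t = -(z\nabla)^{-1}$ and $\Phi = z$ on $V$, the functor commutes with direct sums, so it suffices to treat an indecomposable $E = E_f\otimes J_m$ with $\textup{ord}(f) < 0$. Next I would check that both $\Phi = z$ and $\t$ are similitudes for the $\zeta$-adic norm: multiplication by $z = \zeta^{-1}$ satisfies $||z\,v||_\zeta = \epsilon^{-1}||v||_\zeta$, while $z\nabla = -\zeta\nabla_\zeta$ is a composite of similitudes (an indecomposable connection with no horizontal sections being a similitude), so its inverse $\t$ is one as well. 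The positive-slope hypothesis, via Proposition~\ref{normprop}\eqref{normprop3.5}, gives $||\t||_{inf} = \epsilon^{-\textup{ord}(f)} < 1$.

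I can then invoke Corollary~\ref{operator order relation} with $y = \zeta$ and $Z = \t$: since $\t$ is a similitude with $||\t||_\zeta < 1$, the space $V$ carries the $\Bbbk((\t))$-structure and $||A||_\t = ||A||_\zeta$ for every similitude $A$. Applying this to $A = \Phi = z$ gives
\[
||\Phi||_\t = ||z||_\zeta = \epsilon^{\textup{ord}_\zeta(z)} = \epsilon^{-1} > 1 .
\]
By Lemma~\ref{indecomp map}, $\Phi$ is indecomposable, so $\M^{(\infty,\infty)}(E) = D_g\otimes T_m$ for a single $g$, and Corollary~\ref{ordercorollaries}\eqref{ordercorollaries2} yields $\textup{ord}(g) = \textup{Ord}(\Phi) = \log_\epsilon||\Phi||_\t = -1 < 0$. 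Hence $\M^{(\infty,\infty)}(E)\in\N^{<0}$, as claimed.

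The only delicate point --- and essentially the whole content of the lemma --- is the bookkeeping of the coordinate change: every norm and order must be read relative to $\zeta$, and the sign reversal $||z||_\zeta = \epsilon^{-1}$ (versus $||z||_z = \epsilon$ in the zero-to-infinity case) is exactly what drives the image into $\N^{<0}$. I expect the main thing to verify carefully is that Proposition~\ref{normprop}\eqref{normprop3.5} and Corollary~\ref{operator order relation} are genuinely applied with respect to the $\zeta$-norm, since this is the one feature that distinguishes the argument from that of Lemma~\ref{Mzi target lemma}.
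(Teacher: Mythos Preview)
Your approach is exactly the one the paper intends (the paper omits this proof, pointing back to Lemma~\ref{Mzi target lemma}): reduce to an indecomposable, observe that $z$ and $\t$ are similitudes in the $\zeta$-adic norm, invoke Corollary~\ref{operator order relation} to identify $||\Phi||_\t$ with $||z||_\zeta=\epsilon^{-1}>1$, and conclude via Lemma~\ref{indecomp map} that the image lies in $\N^{<0}$. The coordinate bookkeeping you highlight is precisely the point.

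One small slip, harmless for the lemma but worth fixing: the step $\textup{ord}(g)=\log_\epsilon||\Phi||_\t=-1$ uses the wrong base. When you pass to the $\Bbbk((\t))$-structure via Proposition~\ref{Mellin defs lemma}, the valuation base becomes $\hat\epsilon=\epsilon^{\alpha}$ with $\alpha=-\textup{ord}(f)=s/r$, so in fact $\textup{ord}(g)=\log_{\hat\epsilon}\epsilon^{-1}=-r/s$, consistent with Theorem~\ref{thmMii}. For the present lemma you only need $||\Phi||_\t>1\iff\textup{ord}(g)<0$, which holds regardless of the base, so the argument stands; just drop the claim that the order is exactly $-1$.
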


\begin{lemma}\label{inv Mii target lemma} 
Let $D=(V,\Phi)\in \N^{<0}$ be as in Definition \ref{def inv mellin ii}. Then $\M^{-(\infty,\infty)}(D)\in \C_{\infty}^{>0}$.
\end{lemma}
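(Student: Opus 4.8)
The plan is to adapt the proof of Lemma~\ref{Mzi target lemma}, reversing the roles of $\C_\infty$ and $\N$. By Proposition~\ref{diffopprop} we may assume $D=(V,\Phi)$ is indecomposable, say $D=D_g\otimes T_m$ with $\textup{ord}(g)<0$. Then $\Phi$ is a similitude, and so is multiplication by $\t$; moreover Proposition~\ref{diffop norm prop} gives $||\Phi||_{inf}=\epsilon^{\textup{ord}(g)}>1$, whence by Claim~\ref{similitude claim} the new coordinate $\zeta:=\Phi^{-1}=z^{-1}$ is a contracting similitude with $||\zeta||=\epsilon^{-\textup{ord}(g)}<1$. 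Corollary~\ref{operator order relation} therefore furnishes the $\Bbbk((\zeta))$-structure on $V$ used in Definition~\ref{def inv mellin ii} and ensures that every similitude---in particular $\t$ and $\nabla=-(\t\Phi)^{-1}$---stays a similitude whose norm is unchanged when $V$ is reinterpreted as a $\Bbbk((\zeta))$-vector space. By Lemma~\ref{indecomp map} the image $\M^{-(\infty,\infty)}(D)=(V_\zeta,\nabla)$ is again indecomposable, say $E_f\otimes J_m$, so it remains only to verify $\textup{ord}(f)<0$, i.e.\ that $E_f$ has positive slope and hence lies in $\C_\infty^{>0}$.

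The heart of the argument is the operator identity $(z\nabla)^{-1}=-\t$. Indeed, from $z=\Phi$ and $\nabla=-(\t\Phi)^{-1}$ one computes $z\nabla=-\Phi(\t\Phi)^{-1}=-\Phi\,\Phi^{-1}\t^{-1}=-\t^{-1}$, using only associativity and $(AB)^{-1}=B^{-1}A^{-1}$; in particular the failure of $\Phi$ and $\t$ to commute never enters. Since $\t$ acts with $||\t||_\t=\epsilon<1$ in the original $\N$-structure, Corollary~\ref{operator order relation} applied to the similitude $\t$ gives $||(z\nabla)^{-1}||_\zeta=||\t||_\zeta=||\t||_\t=\epsilon<1$. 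On the other hand $z\nabla=-\t^{-1}$ is invertible, so Proposition~\ref{normprop},~\eqref{normprop3.5} applies to the indecomposable connection $E_f$ and yields $||(z\nabla)^{-1}||_{inf}=\epsilon^{-\textup{ord}(f)}$. Comparing the two expressions, the inequality $||(z\nabla)^{-1}||<1$ forces $-\textup{ord}(f)>0$, so $\textup{ord}(f)<0$ and $\M^{-(\infty,\infty)}(D)\in\C_\infty^{>0}$, as claimed.

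The step requiring the most care is this final comparison, because the $\Bbbk((\zeta))$-structure supplied by Corollary~\ref{operator order relation} (via Proposition~\ref{Mellin defs lemma}) is normalized by $\hat\epsilon=||\zeta||=\epsilon^{-\textup{ord}(g)}$, whereas Proposition~\ref{normprop} is stated in the standard valuation with base $\epsilon$; when $\Phi$ is ramified these two normalizations differ by a positive rescaling of orders. One already sees this phenomenon in the forward direction, where the sample calculation produces output order $r/s$ rather than $1$, so one cannot read the exact value of $\textup{ord}(f)$ off $||\t||_\zeta=\epsilon$ alone. This causes no difficulty, however: a positive rescaling of the valuation preserves the condition ``$||\cdot||<1$,'' and it is only the \emph{sign} of $\textup{ord}(f)$---not its magnitude---that governs membership in $\C_\infty^{>0}$. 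This is precisely the logic already employed in Lemma~\ref{Mzi target lemma}, and the fact that $\nabla$ is an honest connection, so that $(V_\zeta,\nabla)$ belongs to $\C_\infty$ in the first place, is part of the well-definedness of $\M^{-(\infty,\infty)}$ established earlier.
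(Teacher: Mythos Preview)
Your proof is correct and follows exactly the approach the paper intends: the paper omits the proof of this lemma, stating only that it is ``similar to the proof of Lemma~\ref{Mzi target lemma},'' and you have carried out that adaptation faithfully---reducing to the indecomposable case, using the similitude machinery and Corollary~\ref{operator order relation} to transport norms, invoking Lemma~\ref{indecomp map}, and checking the key inequality via the identity $(z\nabla)^{-1}=-\t$. Your final paragraph on the normalization discrepancy between $\epsilon$ and $\hat\epsilon$ is more careful than the paper's own treatment in Lemma~\ref{Mzi target lemma} (which simply writes $||\Phi||_\t=||z||_z=\epsilon^1<1$ without comment), but your observation that only the \emph{sign} of the exponent matters is precisely the point and resolves the issue cleanly.
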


\subsection{Proofs for equivalence of categories}

\begin{theorem}\label{Mzi equiv}
  The local Mellin transform $\M^{(0,\infty)}$ induces
   an equivalence of categories between $\C_0^{>0}$ and $\N^{>0}$.
\end{theorem}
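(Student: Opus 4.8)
The plan is to assemble the equivalence from pieces that have already been established in the excerpt. The key observation, stated in the opening of Section~\ref{equiv cat sec}, is that $\M^{(0,\infty)}$ and $\M^{-(0,\infty)}$ are mutually inverse functors whenever their composition is defined: this is immediate by inspection, since $\M^{(0,\infty)}$ sets $\t=-(z\nabla)^{-1}$ and $\Phi=z$, while $\M^{-(0,\infty)}$ sets $z=\Phi$ and $\nabla=-(\t\Phi)^{-1}$, and these are literally inverse substitutions of one another on the underlying vector space. Thus the only genuine content left is to verify that the two functors actually land in the correct subcategories, so that the composites are defined and the inverse relationship can be invoked.

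First I would record that $\M^{(0,\infty)}$ is a well-defined functor from $\C_0^{>0}$ to $\N$ (Claim~\ref{zi def claim}) and that its image lands in $\N^{>0}$ (Lemma~\ref{Mzi target lemma}). Symmetrically, I would invoke that $\M^{-(0,\infty)}$ is a well-defined functor (Claim~\ref{inv claim zi}) with image in $\C_0^{>0}$ (Lemma~\ref{inv Mzi target lemma}). Together these four statements show that both functors restrict to the subcategories named in the theorem:
\[
\M^{(0,\infty)}:\C_0^{>0}\to\N^{>0},\qquad \M^{-(0,\infty)}:\N^{>0}\to\C_0^{>0}.
\]
I would also need to check that these are genuine functors on morphisms, not merely on objects: a horizontal map $\phi$ (commuting with $\nabla$, hence with $z\nabla$ and $z$) is sent to a map commuting with $\t=-(z\nabla)^{-1}$ and $\Phi=z$, i.e.~a morphism in $\N$, and conversely. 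This functoriality on morphisms is routine, since both $\t$ and $\Phi$ are built from $\nabla$ and $z$ by operations (inversion, scalar multiplication) that $\phi$ commutes with automatically.

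With both functors landing in the stated subcategories, their composites $\M^{-(0,\infty)}\circ\M^{(0,\infty)}$ and $\M^{(0,\infty)}\circ\M^{-(0,\infty)}$ are now defined, and I would invoke the inverse-functor observation from the start of Section~\ref{equiv cat sec} to conclude that each composite is (naturally isomorphic to) the identity. Indeed, applying $\M^{-(0,\infty)}$ to $(V_\t,\Phi)$ recovers the $K$-vector space structure via $z=\Phi$ and the connection via $\nabla=-(\t\Phi)^{-1}=-(z\nabla)^{-1}\cdot\nabla\cdot z\cdot\!\big)^{-1}$, which returns the original $\nabla$ on the nose; the reverse composite is analogous. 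This establishes the equivalence of categories.

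\emph{Main obstacle.} The substantive work is not in this theorem but in the supporting lemmas it cites — principally Lemma~\ref{Mzi target lemma} and Lemma~\ref{inv Mzi target lemma}, which pin down the target subcategories using the similitude/order bookkeeping of Corollary~\ref{operator order relation} and Proposition~\ref{diffop norm prop}. Granting those, the one point in the present proof that deserves care is verifying that the two substitutions are \emph{literally} inverse, i.e.~that $-(\t\Phi)^{-1}$ reconstructs $\nabla$ exactly rather than up to some correction term; this hinges on the commutation identities $[\nabla,z]=1$ and $[\Phi,\eta]=\Phi$ (equivalently $(\eta+1)\Phi=\Phi\eta$) already verified in the proof of Claim~\ref{zi def claim}. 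Once that algebraic identity is in hand, the equivalence follows formally.
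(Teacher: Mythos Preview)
Your proposal is correct and follows essentially the same approach as the paper: the paper's proof simply cites Lemmas~\ref{Mzi target lemma} and~\ref{inv Mzi target lemma} together with the inverse-functor observation from the opening of Section~\ref{equiv cat sec}, which is exactly the skeleton you lay out (with some additional commentary on functoriality and the explicit inverse substitution). One small remark: your displayed computation $-(\t\Phi)^{-1}=-(z\nabla)^{-1}\cdot\nabla\cdot z\cdot\!\big)^{-1}$ has a garbled intermediate expression; the clean version is $-(\t\Phi)^{-1} = -\bigl(-(z\nabla)^{-1}z\bigr)^{-1} = z^{-1}(z\nabla) = \nabla$.
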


\begin{proof}
  This follows from Lemmas \ref{Mzi target lemma} and \ref{inv Mzi target lemma}, as well as the fact (stated above) that $\M^{(0,\infty)}$ and $\M^{-(0,\infty)}$ are inverse functors.
\end{proof}

\begin{theorem}
The local Mellin transform $\M^{(x,\infty)}$ induces an equivalence of categories between the subcategory of $\C_x$ of connections with no horizontal sections and $\N^0$.
\end{theorem}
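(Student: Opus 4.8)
The plan is to follow the template of the proof of Theorem \ref{Mzi equiv}. At the start of this section we have already noted that $\M^{(x,\infty)}$ and $\M^{-(x,\infty)}$ are mutually inverse functors, so the only thing left to establish is that each functor actually lands in the claimed target category. One direction is supplied by Lemma \ref{Mxi target lemma}, which gives $\M^{(x,\infty)}(E)\in\N^0$ whenever $E\in\C_x$ has no horizontal sections. For the reverse direction I would prove the analog of Lemma \ref{inv Mzi target lemma} (not stated explicitly in the list of omitted lemmas): that $\M^{-(x,\infty)}(D)$ is a connection in $\C_x$ with no horizontal sections for every $D$ in the relevant subcategory of $\N^0$. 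Just as in Lemma \ref{Mzi target lemma}, it suffices by the canonical decomposition and Lemma \ref{indecomp map} to argue on an indecomposable $D_g\otimes T_m$, so that the operators become similitudes and Corollary \ref{operator order relation} transfers the norm computation between the $\t$-structure and the $z_x$-structure.

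The key quantitative point is the order bookkeeping. Since we localize at $x\neq 0$, the operator $\Phi=z=z_x+x$ has order zero: its leading (constant) term is $x\neq 0$, so $\Phi$ is a similitude of norm $\epsilon^0=1$ in the $\t$-adic topology, which is exactly why $\M^{(x,\infty)}$ outputs an object of $\N^0$ rather than of $\N^{>0}$ as in the zero-to-infinity case. For the inverse functor the contracting operator used to build the $z_x$-adic Tate structure is not $\Phi$ itself but $\Phi-x=z_x$, while $\nabla=-(\t\Phi)^{-1}$; here I would invoke Corollary \ref{ordercorollaries}, \eqref{ordercorollaries1.5} together with its difference-operator counterpart \eqref{ordercorollaries2} to confirm that $\nabla$ carries the order of a genuine connection on $\C_x$ and, being invertible by construction, has no horizontal sections. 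The continuity, openness, and contraction checks needed to apply Proposition \ref{Tateprop} then run exactly as in Lemma \ref{ext subclaim inv zi}, with $z_x$ in place of the previous contracting operator.

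The main obstacle — and the place where this theorem genuinely differs from Theorem \ref{Mzi equiv} — is pinning down the target category precisely. Because the constant term of $z=z_x+x$ is forced to equal $x$, the image of $\M^{(x,\infty)}$ is not all of $\N^0$ but the subcategory whose order-zero irreducible components share the single leading coefficient $x$ and for which $\Phi-x$ is invertible; this is exactly the domain on which $\M^{-(x,\infty)}$ was defined in Definition \ref{def inv mellin xi}. So in the write-up I would first make explicit that ``$\N^0$'' in the statement is to be read as this $x$-fiber, and then verify that the two subcategories correspond under the functors. The one genuinely new verification is that the ``no horizontal sections'' hypothesis on the $\C_x$ side is equivalent to invertibility of $\nabla$, equivalently of $z\nabla_{z_x}$ since $z$ is a unit at $x$ (Definition \ref{def mellin xi}), so that $\t=-(z\nabla)^{-1}$ exists; under the inverse functor the invertibility of $\Phi-x$ built into $\M^{-(x,\infty)}$ must then be shown to return precisely to this condition, closing the loop and yielding the asserted equivalence.
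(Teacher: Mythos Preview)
Your approach is exactly what the paper intends: it omits the proof entirely and leaves it to the reader to rerun the argument of Theorem \ref{Mzi equiv}, using Lemma \ref{Mxi target lemma} for one direction and the (unstated) analogue of Lemma \ref{inv Mzi target lemma} for the other. Your order bookkeeping via Corollary \ref{operator order relation} and Corollary \ref{ordercorollaries}, \eqref{ordercorollaries1.5} is the right substitute for the $\C_0$ computations.

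Your observation about the target category is not an obstacle to your proof but an actual imprecision in the paper's statement. As you note, for a fixed $x$ the operator $\Phi=z_x+x$ always has leading coefficient $x$, so the essential image of $\M^{(x,\infty)}$ is only the $x$-fiber of $\N^{=0}$ described in Definition \ref{def inv mellin xi}, not all of $\N^0$; conversely, $\M^{-(x,\infty)}$ is only defined on that fiber. So the equivalence you can (and should) prove is between connections in $\C_x$ with no horizontal sections and the subcategory of $\N^{=0}$ whose irreducible components all have leading coefficient $x$ with $\Phi-x$ invertible. The paper's use of the bare symbol $\N^0$ in the theorem is sloppy on this point, and your write-up is the more precise one.
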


\begin{theorem}
The local Mellin transform $\M^{(\infty,\infty)}$ induces an equivalence of categories between $\C_{\infty}^{>0}$ and $\N^{<0}$.
\end{theorem}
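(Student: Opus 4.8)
The plan is to mirror the proof of Theorem~\ref{Mzi equiv} essentially verbatim. To exhibit an equivalence of categories it suffices to produce two functors in opposite directions, to check that each lands in the claimed target subcategory, and to verify that the two composites are naturally isomorphic to the respective identity functors. All three ingredients are already available in the text, so the proof is really an assembly rather than new work.

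First I would record the two target containments. Lemma~\ref{Mii target lemma} gives $\M^{(\infty,\infty)}(\C_{\infty}^{>0})\subseteq \N^{<0}$, and Lemma~\ref{inv Mii target lemma} gives $\M^{-(\infty,\infty)}(\N^{<0})\subseteq \C_{\infty}^{>0}$. Hence $\M^{(\infty,\infty)}$ restricts to a functor $\C_{\infty}^{>0}\to \N^{<0}$ and $\M^{-(\infty,\infty)}$ restricts to a functor $\N^{<0}\to\C_{\infty}^{>0}$, so both functors are defined on precisely the categories named in the statement.

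Next I would invoke the fact, asserted by inspection at the beginning of Section~\ref{equiv cat sec}, that $\M^{(\infty,\infty)}$ and $\M^{-(\infty,\infty)}$ are inverse functors. Concretely, on an object $(V,\nabla)$ the transform sets $\Phi:=z$ and $\t:=-(z\nabla)^{-1}$, while the inverse sets $z:=\Phi$ and $\nabla:=-(\t\Phi)^{-1}$; substituting one pair of definitions into the other returns the original operators on the same underlying $\Bbbk$-vector space (a short computation gives $(\t\Phi)^{-1}=-\nabla$, so the reconstructed connection is again $\nabla$, and the reconstructed $z$ is the original $z$). The only genuine content is that the reconstructed $\Bbbk((\zeta))$- (respectively $\Bbbk((\t))$-) structure coincides with the original one, and this is guaranteed by the uniqueness clause of Proposition~\ref{Tateprop} (equivalently Proposition~\ref{Mellin defs lemma}, \eqref{Mellin defs lemma1}), since in each direction the relevant contracting operator determines the topology, and hence the module structure, uniquely. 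Combining this with the target containments yields natural isomorphisms $\M^{-(\infty,\infty)}\circ\M^{(\infty,\infty)}\simeq \mathrm{id}_{\C_{\infty}^{>0}}$ and $\M^{(\infty,\infty)}\circ\M^{-(\infty,\infty)}\simeq \mathrm{id}_{\N^{<0}}$, which is the desired equivalence.

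The main obstacle, and the only place where this case differs from Theorem~\ref{Mzi equiv}, is the change of coordinate $\zeta=1/z$ at infinity: one must be sure that writing $\nabla_z=-\zeta^2\nabla_\zeta$ (so that $z\nabla=-\zeta\nabla_\zeta$) does not disturb the order bookkeeping on which the inverse-functor identities and the slope conditions rest. This subtlety is exactly what is folded into Lemmas~\ref{Mii target lemma} and~\ref{inv Mii target lemma} and into Proposition~\ref{normprop}, \eqref{normprop3.5}, where the equality $||(z\nabla)^{-1}||_{inf}=||(\zeta\nabla_\zeta)^{-1}||_{inf}=\epsilon^{-\textup{ord}(f)}$ is recorded. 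Since we are free to cite those results, the coordinate change causes no further difficulty, and the proof reduces, as claimed, to citing the two target lemmas together with the inverse-functor property.
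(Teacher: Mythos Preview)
Your proposal is correct and follows exactly the approach the paper takes: the paper gives no separate proof for this theorem, implicitly relying on the pattern of Theorem~\ref{Mzi equiv} with Lemmas~\ref{Mii target lemma} and~\ref{inv Mii target lemma} in place of Lemmas~\ref{Mzi target lemma} and~\ref{inv Mzi target lemma}, together with the inverse-functor observation at the start of Section~\ref{equiv cat sec}. Your added remarks on the uniqueness clause of Proposition~\ref{Tateprop} and on the $\zeta$-coordinate bookkeeping are accurate elaborations of points the paper leaves implicit.
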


\section{Explicit calculations of local Mellin transforms}\label{LMT state thms}

In this section we give precise statements of explicit formulas for calculating the local Mellin transforms and their inverses.  The results and proofs found in this chapter are analogous to those given for the local formal Fourier transforms in \cite{gs}.  Section \ref{LMT proofs} is devoted to proving the formulas given in section \ref{LMT state thms}.

\subsection{Calculation of $\mathcal{M}^{(0,\infty)}$}

\begin{theorem}\label{thmMzi} Let $s$ and $r$ be positive integers, $a\in\Bbbk-\{0\}$, and $f\in R^{\circ}_r(z)$ with $f=az^{-s/r}+\underline{o}(z^{-s/r})$.  Then
\[\mathcal{M}^{(0,\infty)}(E_f)\simeq D_g,\]
where $g\in {S^{\circ}_{s}(\t)}$ is determined by the following system of equations:
\begin{equation}\label{Mzisyseq1}f=-\t^{-1}
\end{equation}
\begin{equation}\label{Mzisyseq2} g=z-(-a)^{r/s}\left(\frac{r+s}{2s}\right)\t^{1+(r/s)}
\end{equation}
\end{theorem}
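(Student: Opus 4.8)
The plan is to compute $\mathcal{M}^{(0,\infty)}(E_f)$ directly from Definition \ref{def mellin zi} by tracking how the operators $\t=-(z\nabla)^{-1}$ and $\Phi=z$ act, and then to read off the canonical form $D_g$ of the resulting difference operator. Since $E_f=(K_r,\frac{d}{dz}+z^{-1}f)$ with $f=az^{-s/r}+\underline{o}(z^{-s/r})$, we have $z\nabla = z\frac{d}{dz}+f$, an operator of order $\textup{ord}(f)=-s/r<0$ by Corollary \ref{ordercorollaries}. The defining relation $\t=-(z\nabla)^{-1}$ is exactly the first system equation \eqref{Mzisyseq1} at leading order, so the real content is to invert $z\nabla$ as an operator and then express $\Phi=z$, the multiplication-by-$z$ operator, in terms of $\t$; the series expressing $z$ as a function of $\t$ is precisely $g$ up to the normalization allowed by \eqref{kq subset}.

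**Inverting $z\nabla$ via the Operator-root Lemma.**
First I would set $A=$ multiplication by $f$ and $B=z\frac{d}{dz}$, so that $z\nabla=A+B$. Here $\textup{Ord}(A)=\textup{ord}(f)=-s/r$ and $\textup{Ord}(B)=0$ in the notation of Lemma \ref{abratlemma} (taking $n=1$, $p=-s$, $q=r$), and the hypothesis $p/q<n-1$ reads $-s/r<0$, which holds. The lemma then licenses choosing a compatible $p$-th root and expanding fractional powers of $A+B$ as
\[
(A+B)^m = A^m + mA^{m-1}B + \tfrac{m(m-1)}{2}A^{m-2}[B,A]+\underline{o}\bigl(z^{(p/q)(m-1)+n-1}\bigr).
\]
The key computation is the commutator $[B,A]=[z\tfrac{d}{dz},f]=z f'$, which for $f=az^{-s/r}+\cdots$ gives $zf'=-\tfrac{s}{r}az^{-s/r}+\cdots$, i.e. a term of the same order as $A$ scaled by $-s/r$. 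Using $\t=-(A+B)^{-1}$ I would invert, so that $z=\Phi$ must be rewritten; because $\Phi=z$ is multiplication by $z$ and $A=f=az^{-s/r}+\cdots$, I solve $z$ as a fractional-power series in $A$, hence in $\t^{-1}=-(A+B)$. Setting $m=-r/s$ to extract $z$ from $A^{m}=(az^{-s/r})^{-r/s}$-type leading behavior is the mechanism that produces the exponent $1+(r/s)$ and the coefficient $(-a)^{r/s}$ in \eqref{Mzisyseq2}.

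**Pinning down the correction term.**
The subleading coefficient $\bigl(\tfrac{r+s}{2s}\bigr)$ is where the real work lies: it comes from combining the linear term $mA^{m-1}B$ and the quadratic commutator term $\tfrac{m(m-1)}{2}A^{m-2}[B,A]$ in the operator-root expansion, evaluated at $m=-r/s$, together with the relation $[B,A]=zf'$. I would compute the coefficient of $\t^{1+(r/s)}$ in the expansion of $z$ as a series in $\t$, collect the contribution of $B$ (which contributes $\tfrac{1}{2}$ from a symmetrization/the $\tfrac{m(m-1)}{2}$ factor) and of $zf'$ (which contributes the $\tfrac{r}{s}$ via $\textup{ord}(f)=-s/r$), and verify that they sum to $\tfrac{r+s}{2s}$. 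The main obstacle will be this bookkeeping: correctly normalizing modulo the indeterminacy in \eqref{kq subset} (where $g$ is defined up to the $\tfrac{a_0\bigz}{q}\t^{\lambda+1}$ shift and the $1+\tfrac{\bigz}{q}\t$ quotient), and making sure the $\underline{o}$ error terms from Lemma \ref{abratlemma} are genuinely of higher order than $\t^{1+(r/s)}$ so that no further corrections survive.

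**Finishing.**
Finally I would record that $g\in S^{\circ}_s(\t)$ by checking $\textup{ord}(g)=1$ and that the ramification index passes from $r$ to $s$ (consistent with $\mathcal{M}^{(0,\infty)}$ exchanging the roles of $r$ and $s$, as the leading data $f=-\t^{-1}$ forces $\textup{ord}(\t)=-\textup{ord}(f)^{-1}$ to rescale the uniformizer), and that irreducibility is preserved by Lemma \ref{indecomp map} together with Lemma \ref{Mzi target lemma}, which places $D_g$ in $\N^{>0}$. The equations \eqref{Mzisyseq1}–\eqref{Mzisyseq2} then constitute the promised implicit description of $g$.
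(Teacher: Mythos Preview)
Your plan is essentially identical to the paper's proof: both set $A=f$, $B=z\dfrac{d}{dz}$, use $-\t=(A+B)^{-1}$ together with the Operator-root Lemma to expand $(-\t)^{r/s}$ and $(-\t)^{(r+s)/s}$, solve for the operator $z$ as a series in $\t$, and then normalize modulo the quotient \eqref{kq subset} (which absorbs a $\tfrac{\mathbb{Z}}{s}$ term) to obtain $g$. One small slip in your final paragraph: the leading term of $z$ in $\t$ is $(-a)^{r/s}\t^{r/s}$, so $\textup{ord}(g)=r/s$, not $1$; this is still positive, which is what is needed for $D_g\in\N^{>0}$.
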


\begin{remark}
We determine $g$ using \eqref{Mzisyseq1} and \eqref{Mzisyseq2} as follows.
One can think of \eqref{Mzisyseq1} as an implicit definition for the variable $z$.  Thus we first use \eqref{Mzisyseq1} to give an explicit expression for $z$ in terms of $\t^{1/s}$.  We then substitute this explicit expression
into \eqref{Mzisyseq2} to get an expression for $g(\t)$ in terms of $\t^{1/s}$.  This same pattern for determining $g$ holds for similar calculations in this section.

When we use \eqref{Mzisyseq1} to write an expression for $z$ in terms of $\t^{1/s}$, the expression is not unique since we must make a choice of a root of unity.  More concretely, let $\eta$ be a primitive $s^{\text{th}}$ root of unity.  Then replacing $\t^{1/s}$ with $\eta\t^{1/s}$ in our explicit equation for $z$ will yield another possible expression for $z$.  This choice will not affect the overall result, however, since all such possible expressions will lie in the same Galois orbit.  Thus by Proposition \ref{diffopprop},\eqref{diffopprop1}, any choice of root of unity will correspond to the same difference operator.
\end{remark}

\begin{corollary*}\label{Mcorollary}

Let $E$ be an object in $\mathcal{C}_0^{>0}$.  By Proposition \ref{prop}, \eqref{prop3}, let $E$ have decomposition
$\d{E\simeq \bigoplus_i \bigg(E_{f_i}\otimes J_{m_i}\bigg)}$ where all $E_{f_i}$ have positive slope.
Then
$$\mathcal{M}^{(0, \infty)}(E)\simeq \bigoplus_i \bigg(D_{g_i}\otimes T_{m_i}\bigg)$$
where $D_{g_i}=\M^{(0,\infty)}(E_{f_i})$ for all $i$.
\end{corollary*}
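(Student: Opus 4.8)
The plan is to bootstrap from the equivalence of categories already established in Theorem~\ref{Mzi equiv}: $\mathcal{M}^{(0,\infty)}$ is an equivalence between $\C_0^{>0}$ and $\N^{>0}$, and any equivalence of additive categories preserves finite direct sums. Hence $\mathcal{M}^{(0,\infty)}\big(\bigoplus_i (E_{f_i}\otimes J_{m_i})\big)\simeq\bigoplus_i\mathcal{M}^{(0,\infty)}(E_{f_i}\otimes J_{m_i})$, and it suffices to treat a single indecomposable: I must show $\mathcal{M}^{(0,\infty)}(E_f\otimes J_m)\simeq D_g\otimes T_m$, where $D_g=\mathcal{M}^{(0,\infty)}(E_f)$ is the irreducible difference operator produced by Theorem~\ref{thmMzi} (irreducible because $g\in S_s^\circ$).

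First I would assemble the structural facts. The subcategories $\C_0^{>0}$ and $\N^{>0}$ are $\Bbbk$-linear abelian categories closed under subobjects and quotients, since the slopes (respectively orders) of any subquotient form a subset of those of the ambient object. An equivalence automatically preserves monomorphisms and epimorphisms, hence short exact sequences and composition series together with the multiplicities of the composition factors. Next I would identify those factors: $J_m=(K^m,\frac{d}{dz}+z^{-1}N_m)$ is uniserial with all $m$ composition factors isomorphic to the trivial connection $J_1$, so, since $E_f\otimes-$ is exact and $E_f\otimes J_1=E_f$, the indecomposable $E_f\otimes J_m$ has exactly $m$ composition factors, each isomorphic to $E_f$. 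Dually, $T_m=(K^m,U_m\varphi)$ with $U_m=I_m+\t N_m$ is uniserial with all factors isomorphic to $T_1$, so $D_g\otimes T_m$ is indecomposable with exactly $m$ composition factors, each isomorphic to $D_g$.

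With these facts the argument closes quickly. By Lemma~\ref{indecomp map} the object $\mathcal{M}^{(0,\infty)}(E_f\otimes J_m)$ is indecomposable, so by Proposition~\ref{diffopprop},\eqref{difopprop3} it is isomorphic to $D_h\otimes T_n$ for some irreducible $D_h$ and some $n\geq 1$. Applying the equivalence to a composition series of $E_f\otimes J_m$ produces a composition series of the image whose every factor is $\mathcal{M}^{(0,\infty)}(E_f)=D_g$; thus $D_h\otimes T_n$ has exactly $m$ composition factors, all isomorphic to $D_g$. Comparing with the factor count of $D_h\otimes T_n$ forces $D_h\simeq D_g$ and $n=m$, giving $\mathcal{M}^{(0,\infty)}(E_f\otimes J_m)\simeq D_g\otimes T_m$. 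Summing over $i$ recovers the stated decomposition.

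The step I expect to be most delicate is the composition-factor bookkeeping, and specifically the claim that $\C_0^{>0}$ and $\N^{>0}$ are abelian subcategories stable under subquotients; this is what licenses transporting composition series across the equivalence and is the only real input beyond the already-proven equivalence. A secondary verification is that $J_m$ and $T_m$ are genuinely uniserial with the claimed factors, which for $T_m$ follows from the difference-operator reduction underlying Proposition~\ref{diffopprop}. Should one wish to bypass the categorical machinery, a direct route is available: since $N_m$ commutes with $L:=z\frac{d}{dz}+fI$, one has $\t=-(z\nabla)^{-1}=\t_0(I-\t_0 N_m)^{-1}$ with $\t_0$ the Mellin coordinate attached to $E_f$, and one can then expand $\Phi=z$ in the $\t$-coordinate and check that it equals $g\,U_m\,\varphi$; I expect this to yield the same conclusion but to be markedly more computational, so I would favor the categorical proof.
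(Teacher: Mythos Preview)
Your argument is correct and follows essentially the same line as the paper's sketch: both use the equivalence of Theorem~\ref{Mzi equiv} to split the direct sum and then argue that the indecomposable $E_f\otimes J_m$, characterized as the unique indecomposable built from $m$ successive extensions of $E_f$, must go to the corresponding indecomposable $D_g\otimes T_m$ on the difference side. Your version simply makes the composition-series bookkeeping and the abelian/uniserial verifications explicit where the paper leaves them implicit.
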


\begin{proof}[Sketch of Proof]
   The equivalence of categories given in Theorem \ref{Mzi equiv} implies that  
$$\mathcal{M}^{(0, \infty)}\left[\bigoplus_i \bigg(E_{f_i}\otimes J_{m_i}\bigg)\right]\simeq \bigoplus_i \mathcal{M}^{(0, \infty)}\bigg(E_{f_i}\otimes J_{m_i}\bigg).$$
The equivalence also implies that $\mathcal{M}^{(0, \infty)}$ will map the indecomposable object $E_f\otimes J_m$ (as the unique indecomposable in $\mathcal{C}_0$ formed by $m$ successive extensions of $E_f$) to an indecomposable object $D_g\otimes T_m$ (as the unique indecomposable in $\mathcal{N}$ formed by $m$ successive extensions of $D_g$).  It follows that we only need to know how $\mathcal{M}^{(0, \infty)}$ acts on $E_f$, which is given by Theorem \ref{thmMzi}.
\end{proof}
\begin{remark}
Analogous corollaries hold for the calculation of the other local Mellin transforms, however we do not state them explicitly.
\end{remark}

\subsection{Calculation of $\mathcal{M}^{(x,\infty)}$}

\begin{theorem}\label{thmMxi} Let $s$ be a nonnegative integer, $r$ a positive integer, and $a\in\Bbbk - \{0\}$. Let $f\in R^{\circ}_r(z_x)$ with $f=az_x^{-s/r}+\underline{o}(z_x^{-s/r})$.  Then
\[\mathcal{M}^{(x,\infty)}(E_f)\simeq D_g,\]
where $g\in {S^{\circ}_{r+s}(\t)}$ is determined by the following system of equations:
\begin{equation*}\label{xisyseq1}f=-\left(\frac{z_x}{z}\right)\t^{-1}
\end{equation*}
\begin{equation*}\label{xisyseq2} g=z+\left(\frac{xs}{2(s+r)}\right)\t
\end{equation*}
\end{theorem}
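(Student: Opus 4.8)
The plan is to follow the method used for Theorem \ref{thmMzi} (and for the local Fourier transforms in \cite{gs}), modified to account for the fact that here we localize at a point $x\neq 0$. First I would reduce to the irreducible case. By Proposition \ref{prop}, \eqref{prop2} the object $E_f$ with $f\in R^{\circ}_r(z_x)$ is irreducible, and by Lemma \ref{indecomp map} together with the equivalence of categories for $\M^{(x,\infty)}$ its transform is again irreducible; hence $\M^{(x,\infty)}(E_f)$ is a single block $D_g$ and it suffices to identify $g$. Writing $E_f=(K_r,\nabla)$ with $\nabla=\frac{d}{dz_x}+z_x^{-1}f$ and $z=z_x+x$, I recall from Definition \ref{def mellin xi} that $\t=-(z\nabla)^{-1}$ and $\Phi=z$, and that $g$ is obtained by presenting $V$ in the coordinate $\t$ and reading off the action of $\Phi$ on the cyclic vector $v_0=1$ in canonical form (Theorem \ref{diffop canon form}).

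Second, I would pin down the ramification, which is where the index $r+s$ in $S^{\circ}_{r+s}(\t)$ comes from. Since $z=z_x+x$ is a unit, Corollary \ref{ordercorollaries}, \eqref{ordercorollaries1.5} gives $\O\big((z\nabla)^{-1}\big)=1-\text{ord}(f)=(r+s)/r$, so to leading order $\t$ acts as multiplication by an element of order $(r+s)/r$ in $z_x$. Thus $\t^{1/(r+s)}$ and $z_x^{1/r}$ generate the same field, $V_{\t}\cong\Bbbk((\t^{1/(r+s)}))$ is one–dimensional over $K_{r+s}$, and the free Galois action inherited from $f\in R^{\circ}_r$ shows $g\in S^{\circ}_{r+s}$.

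Third, I would establish the two defining relations. Relation \eqref{xisyseq1} is just the leading-order content of $\t^{-1}=-z\nabla$: for $s>0$ the multiplication term $z_x^{-1}f$ dominates $\frac{d}{dz_x}$, so $z\nabla\sim(z/z_x)f$ and hence $f=-(z_x/z)\t^{-1}$; read as an implicit equation this expresses $z_x$, and therefore $z$, as a Puiseux series in $\t$. Relation \eqref{xisyseq2} requires the subleading term, and here I would apply the Operator-root Lemma (Lemma \ref{abratlemma}) to $z\nabla=A+B$, where $A$ is multiplication by the leading part $(z_x+x)z_x^{-1}f$ and $B=x\frac{d}{dz_x}$ is the leading derivation, in order to compute $(z\nabla)^{-1}$ — equivalently the action of $\t$ on $v_0$ — one order beyond the leading term. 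Inverting to write $z=z_x+x$ in the coordinate $\t$ and reducing $\Phi=z$ to canonical form then yields $g=z+\frac{xs}{2(s+r)}\t$. Finally, the explicit series for $g(\t)$ is produced by the substitution procedure described in the remark following Theorem \ref{thmMzi}: solve \eqref{xisyseq1} for $z$ in terms of $\t^{1/(r+s)}$ and substitute into \eqref{xisyseq2}, with Proposition \ref{diffopprop}, \eqref{diffopprop1} guaranteeing independence of the root-of-unity choice.

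The main obstacle is the bookkeeping in the third step, namely getting the correction coefficient $\frac{xs}{2(s+r)}$ exactly right. Two effects must be tracked and combined: the commutator term $\frac{m(m-1)}{2}A^{m-2}[B,A]$ from the Operator-root Lemma, which arises from inverting the non-commuting $A$ and $B$ and supplies both the factor $s$ (via $\text{ord}(f)=-s/r$, so that $[B,A]$ differentiates the leading symbol) and the $s+r$ from the ramification, and the passage from the bare multiplication operator $\Phi=z$ to a genuine difference operator, where the shift $\t\mapsto\frac{\t}{1+\t}$ encoded in $\Phi\t=\frac{\t}{1+\t}\Phi$ (derived in the proof of Claim \ref{zi def claim}) contributes the remaining half. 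Care is also needed with the hypothesis of Lemma \ref{abratlemma}: since $B=x\frac{d}{dz_x}$ has order $-1$ while $A$ has order $\text{ord}(f)-1=-(r+s)/r$, the required inequality reads $-(r+s)/r<-1$, i.e. $s>0$; the degenerate case $s=0$ has vanishing correction and is verified directly, giving $g=z$. Once the coefficient is confirmed, the identification $\M^{(x,\infty)}(E_f)\simeq D_g$ with $g$ determined by \eqref{xisyseq1}–\eqref{xisyseq2} follows.
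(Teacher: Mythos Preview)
Your proposal is correct and follows essentially the same route as the paper: split into the regular case $s=0$ (where the correction vanishes and one checks $g=z$ directly) and the irregular case $s>0$, in the latter discard the order-zero term $z_x\frac{d}{dz_x}$ so that $-\t=(zz_x^{-1}f+x\frac{d}{dz_x})^{-1}$, apply the Operator-root Lemma with $A=zz_x^{-1}f$ and $B=x\frac{d}{dz_x}$, solve for $z_x$ in $\t$, and then add $x$. One caution on your bookkeeping remark: the factor $\tfrac{1}{2}$ in $\frac{xs}{2(s+r)}$ arises entirely from the $\frac{m(m-1)}{2}A^{m-2}[B,A]$ term of Lemma~\ref{abratlemma}, not from the shift $\varphi$; the relation $\Phi\t=\frac{\t}{1+\t}\Phi$ only enters through the $\bigz/q$ ambiguity already absorbed in $S_q$ via Proposition~\ref{diffopprop}, \eqref{diffopprop1}.
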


\subsection{Calculation of $\mathcal{M}^{(\infty,\infty)}$}

\begin{theorem}\label{thmMii} Let $s$ and $r$ be positive integers and $a\in\Bbbk - \{0\}$. Then for $f\in R^{\circ}_r(\zeta)$ with $f=a\zeta^{-s/r}+\underline{o}(\zeta^{-s/r})$ we have
\[\mathcal{M}^{(\infty,\infty)}(E_f)\simeq D_g,\]
where $g\in {S^{\circ}_{s}(\t)}$ is determined by the following system of equations:
\begin{equation*}\label{Miisyseq1}f=-\t^{-1}
\end{equation*}
\begin{equation*}\label{Miisyseq2} g=z-(-a)^{r/s}\left(\frac{r+s}{2s}\right)\t^{1-(r/s)}
\end{equation*}
\end{theorem}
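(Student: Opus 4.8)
The plan is to follow the strategy used for Theorem \ref{thmMzi}, transported to the coordinate $\zeta = 1/z$ at infinity. Since $f \in R_r^\circ(\zeta)$, the object $E_f$ is irreducible, so no preliminary decomposition is needed (the general case would follow from an analog of Corollary \ref{Mcorollary}). First I would rewrite the two operators of Definition \ref{def mellin ii} in the coordinate $\zeta$. Using the identity $z\nabla_z = -\zeta\nabla_\zeta$ from the remark preceding Proposition \ref{normprop}, together with $\nabla_\zeta = \frac{d}{d\zeta} + \zeta^{-1}f$, one obtains
\[
\t^{-1} = -z\nabla = \zeta\frac{d}{d\zeta} + f, \qquad \Phi = z = \zeta^{-1}.
\]
Because $\mathrm{ord}(f) = -s/r < 0 = \mathrm{ord}\big(\zeta\frac{d}{d\zeta}\big)$, the dominant term of $\t^{-1}$ is $f$; matching leading terms gives the first equation of the theorem's defining system, which we then invert to express $z = \zeta^{-1}$ as a series in $\t^{1/s}$.

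The engine of the computation is the Operator-root Lemma (Lemma \ref{abratlemma}), applied with $A = f$ (multiplication) and $B = \zeta\frac{d}{d\zeta}$, so that $\mathrm{Ord}(A) = -s/r$, $\mathrm{Ord}(B) = 0$, and the hypothesis $-s/r < 0$ is satisfied. Fixing an $s$-th root then lets me form the fractional powers $(\t^{-1})^{m} = (A+B)^m$ for $m \in \frac{\bigz}{s}$, in particular the power $m = -r/s$ needed to invert $A = f$ and solve for $\zeta$ (hence $z$) in terms of $\t^{1/s}$. The commutator appearing in the lemma is $[B,A] = \zeta f' \sim -\tfrac{s}{r} f$, which is what feeds the second-order correction.

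Next I would identify the canonical form of $\Phi$ as a difference operator. The well-definedness argument of Claim \ref{zi def claim} carries over, giving $\Phi\,\t^{-1} = (\t^{-1}+1)\,\Phi$; equivalently, conjugation by $\Phi$ realizes the shift $\t^{-1}\mapsto \t^{-1}+1 = \varphi(\t^{-1})$, so $\Phi = g\varphi$ for a unique $g$, and Lemma \ref{indecomp map} guarantees that the result is indecomposable. To extract $g$ I would evaluate $\Phi$ on a cyclic generator and compare with the standard automorphism $\varphi$, whose action $\varphi(\t^{1/s}) = \t^{1/s}\big(1 - \tfrac{1}{s}\t + \cdots\big)$ comes from Definition \ref{diffop}. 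The leading part of $g$ is precisely $z$ written in $\t^{1/s}$, giving $\mathrm{ord}(g) = -r/s < 0$ and placing $g$ in $S^\circ_s(\t)$, consistent with Lemma \ref{Mii target lemma} that $\M^{(\infty,\infty)}(E_f) \in \N^{<0}$.

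The main obstacle is pinning down the subleading coefficient, that is, verifying that the correction term equals exactly $(-a)^{r/s}\big(\tfrac{r+s}{2s}\big)\t^{1-(r/s)}$. This coefficient collects two contributions that must be combined with the correct signs: the second-order term $\tfrac{m(m-1)}{2}A^{m-2}[B,A]$ of the Operator-root Lemma at $m = -r/s$ (the same ``connection'' contribution appearing in the Fourier calculations of \cite{gs}), and the genuinely new contribution coming from the nontrivial action of the difference automorphism $\varphi$ on the fractional powers of $\t$. Careful bookkeeping of these two pieces, using $[B,A] \sim -\tfrac{s}{r}f$ and the binomial coefficients $\binom{-1/s}{i}$ defining $\varphi$, is the delicate part and is what produces the factor $\tfrac{r+s}{2s}$; the only structural difference from Theorem \ref{thmMzi} is that the sign change in passing from $z$ to $\zeta = 1/z$ turns the exponent $1+(r/s)$ into $1-(r/s)$.
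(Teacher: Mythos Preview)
Your approach is essentially the paper's: pass to the coordinate $\zeta=1/z$, write $-\t=(-\zeta\frac{d}{d\zeta}+f)^{-1}$, invoke the Operator-root Lemma with $A=f$ and $B$ the Euler operator, solve for $\zeta$ in $\t^{1/s}$, and then invert to obtain $z$. The paper's proof is in fact only a sketch that points back to the proof of Theorem~\ref{thmMzi}, so your level of detail already exceeds it.

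One point of bookkeeping is slightly off. You describe the subleading coefficient as arising from \emph{two} sources, the $[B,A]$ term of the Operator-root Lemma and a ``genuinely new contribution'' from $\varphi$ acting on $\t^{1/s}$. In the paper's computation there is no $\varphi$ contribution: since $g=\Phi(1)=z\cdot 1$ and $\varphi(1)=1$, the automorphism $\varphi$ never enters. Both pieces of the correction come from the Operator-root expansion itself, namely the first-order term $mA^{m-1}B$ (which produces the $\tfrac{\bigz}{s}$ ambiguity, because $B=\zeta\tfrac{d}{d\zeta}$ acts on $\zeta^{n/r}$ as multiplication by $n/r$) and the second-order term $\tfrac{m(m-1)}{2}A^{m-2}[B,A]$ (which produces $\tfrac{r+s}{2s}$). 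The $\tfrac{\bigz}{s}$ piece is then absorbed by Proposition~\ref{diffopprop}\,(\ref{diffopprop1}), exactly as in the proof of Theorem~\ref{thmMzi}. If you carry out the calculation as you have outlined it you will still land on the right answer, but you should relocate that second contribution to the $mA^{m-1}B$ term rather than to $\varphi$.
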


In section \ref{equiv cat sec} we explained that $\mathcal{M}^{-(0,\infty)}$, $\mathcal{M}^{-(x,\infty)}$, and $\mathcal{M}^{-(\infty,\infty)}$ are inverse functors for (respectively) $\mathcal{M}^{(0,\infty)}$, $\mathcal{M}^{(x,\infty)}$, and $\mathcal{M}^{(\infty,\infty)}$.  It follows that explicit formulas for the local inverse Mellin transforms can be found merely by ``inverting" the expressions found in Theorems \ref{thmMzi}, \ref{thmMxi}, and \ref{thmMii}.  We give an example below of what this would look like for $\mathcal{M}^{-(0,\infty)}$, the other local inverse Mellin transforms are similar.  The proofs are omitted.

\begin{theorem}\label{thmIMzi}
	 Let $p$ and $q$ be positive integers and $g\in S^{\circ}_q(\t)$ with $g=a\t^{p/q}+\underline{o}(\t^{p/q})$, $a\neq 0$.  Then
\[\mathcal{M}^{-(0,\infty)}(D_g)\simeq E_f,\]
where $f\in {R^{\circ}_{p}(z)}$ is determined by the following system of equations:
\begin{equation}\label{inv zisyseq1} g+a\left(\frac{p+q}{2q}\right)\t^{1+(p/q)}=z
\end{equation}
\begin{equation}\label{inv zisyseq2}f=-\t^{-1}
\end{equation}
\end{theorem}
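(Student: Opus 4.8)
The plan is to deduce the statement from the explicit formula for the forward transform in Theorem~\ref{thmMzi}, together with the fact established in Section~\ref{equiv cat sec} that $\mathcal{M}^{-(0,\infty)}$ and $\mathcal{M}^{(0,\infty)}$ are mutually inverse functors realizing the equivalence of Theorem~\ref{Mzi equiv}. Because of this inverse relationship, proving $\mathcal{M}^{-(0,\infty)}(D_g)\simeq E_f$ amounts to checking that the pair $(E_f,D_g)$ produced by the inverse system \eqref{inv zisyseq1}--\eqref{inv zisyseq2} is exactly a pair for which Theorem~\ref{thmMzi} already asserts $\mathcal{M}^{(0,\infty)}(E_f)\simeq D_g$. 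So the entire content reduces to showing that the inverse system describes the \emph{same} relation among $f$, $g$, $z$, and $\t$ as the forward system \eqref{Mzisyseq1}--\eqref{Mzisyseq2}, simply solved in the opposite direction.

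First I would match the numerical parameters. Given $g\in S^\circ_q$ with $\text{ord}(g)=p/q$, equation \eqref{inv zisyseq1} gives $z$ as a series in $\t$ of order $p/q$, whence $f=-\t^{-1}$ forces $\text{ord}(f)=-q/p$; comparing with the forward setup, where $f$ has order $-s/r$ and produces $g$ of order $r/s$ in $S^\circ_s$, pins down the identification $p=r$ and $q=s$. With this identification the membership statements align: since $\mathcal{M}^{(0,\infty)}$ is an equivalence of categories it preserves irreducible objects, so by Propositions~\ref{prop}, \eqref{prop2} and \ref{diffopprop}, \eqref{diffopprop2} one has $f\in R^\circ_p$ precisely when $g\in S^\circ_q$, which is what the hypotheses supply.

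Next I would verify that the two systems are literally the same pair of equations. The relations $f=-\t^{-1}$ in \eqref{Mzisyseq1} and \eqref{inv zisyseq2} are identical. Rewriting \eqref{inv zisyseq1} as $g=z-a\left(\frac{p+q}{2q}\right)\t^{1+(p/q)}$ and substituting $p=r$, $q=s$ makes it coincide with \eqref{Mzisyseq2} exactly when the leading coefficient $a$ of $g$ equals $(-a_f)^{r/s}$, where $a_f$ is the leading coefficient of $f$. I would confirm this leading-coefficient relation directly: solving $f=a_f z^{-s/r}+\underline{o}(z^{-s/r})=-\t^{-1}$ for $z$ yields $z=(-a_f)^{r/s}\t^{r/s}+\underline{o}(\t^{r/s})$, and since the correction term in \eqref{Mzisyseq2} has order $1+(r/s)>r/s$, the leading term of $g$ is $(-a_f)^{r/s}\t^{r/s}$, so indeed $a=(-a_f)^{r/s}$. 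Hence the two systems agree term by term and the theorem follows.

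The main obstacle I anticipate is not the algebra but the bookkeeping around the fractional power $(-a_f)^{r/s}$ and the attendant choice of an $s^{\text{th}}$ root of unity. As in the remark following Theorem~\ref{thmMzi}, distinct choices of root lie in a single Galois orbit and hence, by Proposition~\ref{diffopprop}, \eqref{diffopprop1}, give isomorphic $D_g$; I would need to record that the inversion respects this ambiguity so that the correspondence is well defined on isomorphism classes in both directions. As an alternative that sidesteps the forward formula, one could compute $\mathcal{M}^{-(0,\infty)}(D_g)$ from scratch via Definition~\ref{def inv mellin zi}, setting $z=\Phi$ and $\nabla=-(\t\Phi)^{-1}$ and extracting the leading behavior of $z\nabla$ with the Operator-root Lemma~\ref{abratlemma}; this mirrors the proof of Theorem~\ref{thmMzi} but is considerably more laborious, which is presumably why the inversion argument is preferred.
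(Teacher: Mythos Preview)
Your proposal is correct and follows exactly the approach the paper indicates: the paper states that the inverse formulas are obtained ``merely by `inverting' the expressions found in Theorems~\ref{thmMzi}, \ref{thmMxi}, and \ref{thmMii}'' and explicitly omits the proof, so your argument---matching $(p,q)$ with $(r,s)$, checking that \eqref{inv zisyseq1}--\eqref{inv zisyseq2} coincide with \eqref{Mzisyseq1}--\eqref{Mzisyseq2} via the leading-coefficient identity $a=(-a_f)^{r/s}$, and invoking the equivalence of Theorem~\ref{Mzi equiv}---is precisely the intended (but unwritten) verification. Your handling of the Galois ambiguity via Proposition~\ref{diffopprop}\eqref{diffopprop1} is also the right bookkeeping step.
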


\begin{remark}
We determine $f$ using \eqref{inv zisyseq1} and \eqref{inv zisyseq2} as follows.
First, using \eqref{inv zisyseq1} we explicitly express $\t$ in terms of $z^{1/p}$.  We then substitute this explicit expression for $\t$ 
into \eqref{inv zisyseq2} and solve to get an expression for $f(z)$ in terms of $z^{1/p}$.\\
\end{remark}

\section{Proof of theorems}\label{LMT proofs}

\subsection*{Outline}\label{Mellin outline}

We begin with a brief outline of the proof for Theorem \ref{thmMzi}.  Starting with Definition \ref{def inv mellin zi} of $\mathcal{M}^{(0,\infty)}$, we set $\t=-(z\nabla)^{-1}$ and $\Phi=z$.  For irreducible objects $E_f$ and $D_g$ we have $\nabla=\frac{d}{dz}+z^{-1}f$ and $\Phi=g\varphi$, and our goal is to use the given value of $f$ to find the expression for $g$.  Since $z=z(1)=\Phi (1)=g\varphi(1)=g$, this amounts to finding an expression for the operator $z$ in terms of the operator $\t$.  The equation $\t=-(z\nabla)^{-1}$ gives an expression for $\t$ in terms of $z$, and we use the Operator-root Lemma (\ref{abratlemma}) to write an explicit expression for the operator $z$ in terms of $\t$.  The calculation primarily involves finding particular fractional powers of $f$, but we must also keep track of the interplay between the linear and differential parts of $\nabla$ during the calculation; this interplay accounts for the subtraction of the term $(-a)^{r/s}\left(\frac{r+s}{2s}\right)\t^{1+\frac{r}{s}}$ from our expression for $g$.

 The proofs for Theorems \ref{thmMxi} and \ref{thmMii} are similar and thus outlines for their proofs are omitted.  The only change of note is that in the proof of Theorem \ref{thmMxi} we must also prove a separate case for when our connection is regular singular (i.e. when ord$(f)=0$). 

\begin{remark}
  We give a brief explanation regarding the origin of the system of equations found in Theorem \ref{thmMzi}.  Consider the equations in \eqref{eq mellin zi}.  Let $\nabla=z^{-1}f$ (i.e. as normally defined but without the differential part) and $\Phi=g$ (as normally defined but without the shift operator $\varphi$).  Then the equations $f=-\t^{-1}$ and $g=z$ fall out easily.  The reason the extra term shows up in \eqref{Mzisyseq2} is due to the interaction of the linear and differential parts of $\nabla$, as described above in the outline.
\end{remark}

\subsection{Proof of Theorem \ref{thmMzi}}

\begin{proof}
  Given $\t=-(z\nabla)^{-1}$ and $\nabla=\frac{d}{dz}+z^{-1}f$, we find that
\begin{equation}\label{thetaexp}
-\t=\left(z\frac{d}{dz}+f\right)^{-1}.
\end{equation}
We wish to express the operator $z$ in terms of the operator $\t$. 

Consider the equation 
\begin{equation}\label{thetaexp no diff}
-\t=f^{-1},
\end{equation}
 which is \eqref{thetaexp} without the differential part.  Equation \eqref{thetaexp no diff} can be thought of as an implicit expression for the variable $z$ in terms of the variable $\t$, which one can rewrite as an explicit expression $z=h(\t)\in \Bbbk((\t^{1/s}))$ for the variable $z$. Note that $h(\t)$ is not the same as the operator $z$.  Since the leading term of $f$ is $az^{-s/r}$, \eqref{thetaexp no diff} implies that $h(\t)=a^{r/s}(-\t)^{r/s}+\lo(\t^{r/s})$.  Similar reasoning and \eqref{thetaexp} indicate that the operator $z$ will be of the form
\begin{equation}\label{z with star}
  z=h(\t)+*(-\t)^{(r+s)/s}+\underline{o}(\t^{(r+s)/s}).
\end{equation}
Here the $*\in \Bbbk$ represents the coefficient that will arise from the interaction of the linear and differential parts of the operator $\t$.  We wish to find the value for *.  Let $A=f$ and $B=z\frac{d}{dz}$, then $[B,A]=zf'$.  From \eqref{thetaexp} we have $-\t=(A+B)^{-1}$, and we apply the Operator-root Lemma (\ref{abratlemma}) to find
\begin{equation}\label{theta r s}
    \begin{split}
  (-\t)^{\frac{r}{s}}& = f^{\frac{-r}{s}}-\left(\frac{r}{s}\right)f^{\frac{-r}{s}-1}z\left(\frac{\bigz}{zr}\right)-\frac{1}{2} \left(\frac{r}{s}\right) \left(-\frac{r}{s}-1\right)f^{\frac{-r}{s}-2}zf'+\lo(z^{(r+s)/r})\\
    & = (a^{-r/s}z+\dots)+a^{-(r+s)/s}\left(\frac{-\bigz}{s}+\frac{-(r+s)}{2s}\right)z^{(r+s)/r}+\lo(z^{(r+s)/r})\\
    & = a^{-r/s}\Big(z+\dots+a^{-1}\left[\frac{-\bigz}{s}+\frac{-(r+s)}{2s}\right]z^{1+(s/r)}+\lo(z^{1+(s/r)})\Big)
    \end{split}
\end{equation}
and
\begin{equation}\label{theta r plus s}
    (-\t)^{(r+s)/s}= a^{-1-(r/s)}z^{1+(s/r)}+\lo(z^{1+(s/r)}).
\end{equation}

\begin{remark}
    We use the notation $\frac{\bigz}{zr}$ to represent $\frac{d}{dz}$ since the operator $\frac{d}{dz}:K_r\to K_r$ acts on $z^{n/r}$ as $\frac{n}{rz}$ for all $n\in\bigz$.
\end{remark}

We can now find the value for * as follows.  Substituting the expressions from \eqref{theta r s} and \eqref{theta r plus s} into \eqref{z with star} and making a short calculation gives 
\[*=a^{r/s}\left[\frac{\bigz}{s}+\frac{r+s}{2s}\right]\]
and thus
\begin{equation}\label{z no star zi}
    z=h(\t)+a^{r/s}\left[\frac{\bigz}{s}+\frac{r+s}{2s}\right](-\t)^{(r+s)/s}+\underline{o}(\t^{(r+s)/s}).
\end{equation}
According to \eqref{z no star zi}, let us express $\hat{g}(\t)$ as 
\begin{equation}\label{g before isom}
\hat{g}(\t)=h(\t)-(-a)^{r/s}\left[\frac{\bigz}{s}+\frac{r+s}{2s}\right]\t^{(r+s)/s}+\underline{o}(\t^{(r+s)/s}).
\end{equation}
Since $h(\theta)=z$, by Proposition \ref{diffopprop}, \eqref{diffopprop1}, $M_{\hat{g}}$ will be isomorphic to $M_{g}$ where ${g}$ is as given in Theorem \ref{thmMzi}.
\end{proof}

\subsection{Proof of Theorem \ref{thmMxi}}

\begin{proof}

  Given $\t=-(z\nabla)^{-1}$ and $\nabla=\frac{d}{dz_x}+z_x^{-1}f$, we write $z=z_x+x$ and find that
\begin{equation}\label{three terms}
\begin{split}
-\t & = \left[(x+z_x)\left(\frac{d}{dz_x}+z_x^{-1}f\right)\right]^{-1}\\
	& = \left(zz_x^{-1}f+x\frac{d}{dz_x}+z_x\frac{d}{dz_x}\right)^{-1}
\end{split}
\end{equation}
Thus in the expression for $-\t^{-1}$ there are three terms. We handle the proof in two cases:

\noindent \textbf{Case One}: Regular singularity.  

In this case we have $f=\alpha\in \Bbbk-\{0\}$, $s=0$ and $r=1$.  Because $\alpha$ is only defined up to a shift by $\mathbb{Z}$ we can ignore the $\frac{d}{dz_x}$ term.  The remaining portion of the proof is as described in the remark following the outline in subsection \ref{Mellin outline}. Note that since $s=0$, the extra $\t$ term in \eqref{xisyseq2} will vanish.

\noindent \textbf{Case Two}: Irregular singularity.

In this situation we have ord$(f)<0$. As we shall see in the proof, the only terms in \eqref{three terms} that affect the final result are those of order less than or equal to -1 (with respect to $z_x$).  Specifically, since $z_x\frac{d}{dz_x}$ has order zero, all terms derived from it in the course of the calculations will fall into the $\lo(\t)$ term. Thus we can safely ignore the term $z_x\frac{d}{dz_x}$ for the remainder of the proof and consider only 

\begin{equation}\label{thetaexp xi}
  -\t = \left(zz_x^{-1}f+x\frac{d}{dz_x}\right)^{-1}.
\end{equation}

We wish to express the operator $z$ in terms of the operator $\t$.  The remainder of the proof is similar to the proof of Theorem \ref{thmMzi}, but we first solve for $z_x=z-x$ in terms of $\t$, then add $x$ to both sides to get an equation for $z$ alone.  

\end{proof}

\subsection{Proof of Theorem \ref{thmMii}}

\begin{proof}
Recall that $z=\frac{1}{\zeta}$ and $f\in\Bbbk((\zeta))$. Given $\t=-(z\nabla)^{-1}$ and $\nabla=-\zeta^2\frac{d}{d\zeta}+\zeta f$, we find that
\begin{equation}\label{thetaexpii}
-\t=\left(-\zeta\frac{d}{d\zeta}+f\right)^{-1}.
\end{equation}
We wish to express the operator $z$ in terms of the operator $\t$.  The proof is similar to that of Theorem \ref{thmMzi}, but first we find an expression for $\zeta$ in terms of $\t$, and then we will invert it.  

\end{proof}

\bibliographystyle{amsplain}
\bibliography{dissreferences}

\end{document}